\numberwithin{equation}{section}
\let\eps=\varepsilon
\let\wt=\widetilde
\def\N{{\mathbb N}}
\def\R{{\mathbb R}}
\def\T{{\mathbb T}}
\def\ep{\varepsilon}
\def\and{\quad{\rm and}\quad}
\def\virgp{\raise 2pt\hbox{,}}
\def\cdotpv{\raise 2pt\hbox{;}}
\def\div{ \hbox{\rm div}\,  }
\def\dr{\delta\!\rho}
\def\du{\delta\!u}
\newtheorem{thma}{Theorem}
\newtheorem*{thmstar}{Theorem}
\numberwithin{equation}{section}
\newtheorem{thm}{Theorem}[section]
\newtheorem{lem}[thm]{Lemma}
\newtheorem{defi}[thm]{Definition}
\newtheorem{prop}[thm]{Proposition}
\newtheorem{rmk}[thm]{Remark}
\newcommand{\ben}{\begin{eqnarray}}
\newcommand{\een}{\end{eqnarray}}
\newcommand{\beno}{\begin{eqnarray*}}
\newcommand{\eeno}{\end{eqnarray*}}
\definecolor{darkgreen}{rgb}{0,0.5,0}
\definecolor{darkblue}{rgb}{0,0,0.7}
\definecolor{darkred}{rgb}{0.9,0.1,0.1}
\definecolor{lightblue}{rgb}{0,0.51,1}
\begin{document}
\title[]{Free boundary regularity of vacuum states for  incompressible viscous flows in unbounded domains}

\author[C. Prange]{Christophe Prange}
\address[C. Prange]{\newline  Laboratoire de Math\'{e}matiques AGM, UMR CNRS 8088, Cergy Paris Universit\'{e}, 2 Avenue Adolphe Chauvin, 95302     Cergy-Pontoise Cedex, France}
\email{christophe.prange@cyu.fr}

 \author[J. Tan]{Jin Tan}
\address[J. Tan]{\newline Laboratoire de Math\'{e}matiques AGM, UMR CNRS 8088, Cergy Paris Universit\'{e}, 2 Avenue Adolphe Chauvin, 95302     Cergy-Pontoise Cedex, France}
\email{jin.tan@cyu.fr}

\date{\today}
 \subjclass[2010]{35A02, 35Q30, 35R35, 76D05, 76D27}
 \keywords{Incompressible Navier-Stokes equations with variable density;  vacuum states; uniqueness; regularity of free-boundaries; whole-space}

\begin{abstract}
In the well-known book of Lions [{\em Mathematical topics in fluid
mechanics. Incompressible models}, 1996],  global existence results of finite energy weak solutions of the inhomogeneous incompressible  Navier-Stokes equations (INS) were proved without assuming positive lower bounds on the initial density, hence allowing for vacuum. Uniqueness, regularity and persistence of boundary re\-gularity of density patches were listed as open problems. A breakthrough on Lions' problems was   recently made by  Danchin and   Mucha  [The incompressible Navier-Stokes equations in vacuum, {\em Comm. Pure Appl. Math.},
72   (2019), 1351--1385]  in the case where the fluid domain is either bounded or the torus. However, the  case of unbounded domains was left open because  of the lack of Poincar\'{e}-type inequalities.  
 In this paper,
we obtain regularity and uniqueness  
of   Lions' weak solutions  for (INS) with \emph{only bounded and nonnegative initial density} and additional
regularity only assumed for the initial velocity,  in the whole-space case $\R^d$, $d=2$ or $3$. 
In particular, our  result allows us to study the evolution of a vacuum bubble embedded in an incompressible fluid, as well as  a patch of a homogeneous fluid embedded in the vacuum,
 which  provides  an answer to Lions' question   in the whole-space case.  
\end{abstract}
\maketitle

\section{Introduction}

In the present paper, we are concerned with the Inhomogeneous incompressible Navier-Stokes equations (INS) in the whole-space $\R^d$ (with $d=2,3$)
\begin{equation}\label{INS}
\left\{\begin{aligned}
  &\partial_t \rho+\nabla\cdot(\rho u) =0, \\
& \partial_t (\rho u) +\nabla\cdot(\rho u\otimes u)+\nabla P =\nu\Delta u,\\
&\nabla\cdot u=0.
 \end{aligned}\right.\tag{INS}
\end{equation}
in the presence of vacuum. 
The unknowns are  the velocity field $u=u(t,x),$  the  density $\rho=\rho(t,x)$ and  the pressure $P=P(t,x)$ and $\nu>0$ is the viscosity constant.

The initial values are prescribed as follows:
\begin{equation}
\rho|_{t=0}=\rho_0, \quad \rho u|_{t=0}=m_0.
\end{equation}
It will be assumed that the initial density $\rho_0$ is nonnegative and bounded in $L^\infty(\R^d)$, but not necessarily bounded from below by a positive constant. This allows us to study, for instance, the evolution of a vacuum bubble embedded in an incompressible fluid, as well as of a patch of a homogeneous fluid embedded in the vacuum. Let us mention that this problem may be reformulated as a  free-boundary problem. Lions in \cite[page 34]{PLL}  raised the following question about the persistence of the interface regularity through the  evolution:
\begin{quote}
[In the case of a density patch $\rho=\mathbf{1}_{D(t)}$, the system] \eqref{INS} can be reformulated as a somewhat complicated free boundary problem. It is also very natural to ask whether the regularity of $D$ is preserved by the time evolution.
\end{quote}
The question asked by Lions was solved by Danchin and Mucha in \cite{DM19} in the case of bounded domains or the Torus, relying in particular on Poincar\'e-type inequalities. Hence, these authors mention the whole-space case as an open problem:
\begin{quote}
[T]he generalization to unbounded domains (even the whole space) within our
approach [is] unclear as regards global-in-time results.
\end{quote}
One of the main motivations in this paper is to address that question. We give a positive answer, see Theorem \ref{C1} below.

\subsection{A brief state of the art}

\paragraph{\underline{Global-in-time finite-energy weak solutions}} 
Since  the pioneering works by  Leray \cite{Leray} and   Ladyzhenskaya  \cite{Lady} on the incompressible homogeneous Navier-Stokes equations,  the existence and uniqueness  issues of   solutions   for the inhomogeneous system \eqref{INS}  has been intensively investigated.   Finite-energy weak solutions in the spirit of `Leray solutions' were built  first by  Kazhikov et al.  \cite{Ka74, AKM90},  in the case when $
\rho_0$ is bounded away from $0$. This result  was extended by Simon \cite{Si} allowing $\rho_0$ to vanish, in the case of bounded domains.
Then,   Lions \cite{PLL} (see also   Desjardins \cite{BD97, BD97-2}) considered  the  so-called density-dependent Navier-Stokes equations, i.e.
the case where viscosity $\nu$ depends on density $\rho,$
 by using general results on transport equations obtained in  DiPerna and Lions' work \cite{DL}.
As a consequence, in the whole-space $\R^d$, $d=2$ or $3$, it is proved in Lions's book \cite{PLL}, see Theorem \ref{theom-Th-PLL} below, that  global weak solutions such that $\rho$ is bounded and  
$u\to u^\infty \quad{\rm as}~|x|\to\infty,\quad{\rm for~all}~t\geq0$, 
exist provided that
\begin{equation}\label{initialcond}
\left\{\begin{aligned}
&\rho_0\geq 0\quad {\rm a.e.~in}~ \R^d,\quad \rho_0\in L^\infty(\R^d),\\
&m_0\in L^2(\R^d),\quad m_0=0 \quad {\rm a.e.~on}~ \{\rho_0=0\},\\
&|m_0|^2/\rho_0\in L^1(\R^d).
\end{aligned}\right.
\end{equation}
Because of the fact that the momentum equation is degenerate when there is vacuum,  Lions needs to assume in addition to \eqref{initialcond}, one of the following three conditions:\\
\emph{\underline{almost No Vaccum}}
\begin{equation}\label{cond1}
(1/\rho_0)\,\mathbf{1}_{\rho_0<\delta_0}\in L^1(\R^d),\quad{\rm for ~some}~\delta_0>0,\tag{aNV}
\end{equation}
or \emph{\underline{Vacuum Bubble}}
\begin{equation}\label{cond2}
(\underline{\rho}-\rho_0)_+\in L^p(\R^d), \quad{\rm for ~some}~\underline{\rho}\in (0, \infty), \,\, p\in (d/2, \infty),\tag{VB}
\end{equation}
or \emph{\underline{Far-Field Vacuum}}
\begin{align}\label{cond3}
\left\{\begin{aligned}
 &{\rm if}~d=2, \quad \int_{\R^2} \rho_0^p\,\langle x\rangle^{2(p-1)}(\log\langle x\rangle)^r\,dx<\infty\\
& \hspace*{2cm}\quad {\rm for~some}~p\in (1, +\infty]~ {\rm and}~r>2p-1\\
 &{\rm if}~d=3,\quad \rho_0\in L^{\frac{3}{2}, \infty}(\R^3),
\end{aligned}\right.\tag{FFV-1}
\end{align}
  where $\langle x\rangle:= (e+|x|^2)^{1/2}.$

It is important to note that above three conditions allow several  physically interesting cases. For example,  condition \eqref{cond1}  allows the density to vanish on zero-measure sets,  condition \eqref{cond2}  allows vacuum bubbles,   while condition \eqref{cond3}  allows  density patches.

Let us give a precise definition of a weak solution to the Cauchy problem associated to system \eqref{INS}.  The following definition and  result are stated in the book \cite{PLL} of Lions.

\begin{defi}[{\cite[Chapter 2]{PLL}}]\label{defweaksolu}
Let $T>0$. We say that $(\rho,  {u})$ is a finite-energy weak solution  of  system \eqref{INS} with the initial conditions \eqref{initialcond}, if $(\rho,  {u})$ satisfies the following properties:
\begin{align*}
&\rho\geq0,\quad\rho\in L^\infty((0, T)\times \R^d), \quad\rho\in \mathcal{C}([0, T]; L^p_{\rm loc}(\R^d)) ~~\,\,{\rm for~all~}1\leq p<\infty,\\
&\rho|{u}|^2\in L^\infty(0, T; L^1({\R^d})), \and\\
&u\in L^2(0, T;\wt{\mathcal D}^{1, 2}{(\R^2)})~{\rm if}~d=2\quad{\rm{or}}\quad u\in L^2(0, T; \mathcal{D}^{1, 2}(\R^3)) ~{\rm if}~d=3,
\end{align*}
where the spaces
\begin{align*}
\wt{\mathcal{D}}^{1, 2}(\R^2):=\{z\in H^1_{\rm loc}(\R^2); \nabla z\in L^2(\R^2)\},  \quad  {\mathcal{D}}^{1, 2}(\R^3):=\{z\in L^6(\R^3);  \nabla z\in L^2(\R^3)\},
\end{align*}
are endowed with the norms
$$
\|z\|_{\wt{\mathcal{D}}^{1, 2}(\R^2)}:=\|\nabla z\|_{L^2(\R^2)},\quad \|z\|_{\mathcal{D}^{1, 2}(\R^3)}:=\|z\|_{L^6(\R^3)}+\|\nabla z\|_{L^2(\R^3)}.
$$
If \eqref{cond1} or \eqref{cond2} hold, we require in addition that $u\in L^2((0, T)\times \R^d).$
Moreover, $(\rho, u)$ satisfies system \eqref{INS} in the sense of distributions in $(0, \infty)\times \R^d$ and  the following energy inequality  for a.e. $t\in(0, T)$
\begin{equation}\label{enineq}
\int_{\R^d}\rho(t, x)|u(t, x)|^2\,dx+2\nu\int_0^t\int_{\R^d}|\nabla u(s, x)|^2\,dxds\leq \int_{\R^d}\frac{|m_0(x)|^2}{\rho_0(x)}\,dx.
\end{equation}
The solution $(\rho,  {u})$ is a \emph{global-in-time} finite-energy weak solution  of  system \eqref{INS} if the properties stated above hold for all $T\in(0,\infty)$.
\end{defi}

\begin{thmstar}[{\cite[Theorem 2.1]{PLL}}]\label{theom-Th-PLL}
Assume that the initial data $(\rho_0, u_0)$ satisfies condition \eqref{initialcond} and one of the conditions \eqref{cond1}-\eqref{cond3}, then there exists a global weak solution $(\rho, u)$ of system \eqref{INS} in the sense of Definition \ref{defweaksolu}. Furthermore, one has for all $0\leq \alpha_0\leq \beta_0<\infty$
\begin{align}\label{meas-density}
{\rm meas}\{x\in \R^d\,|\,\,\alpha_0\leq\rho(t, x)\leq \beta_0\}~\,~{\rm is~independent~of}~t\geq0.
\end{align}
And if $\rho_0-\rho^\infty\in L^p(\R^d)$ for some $1\leq p<\infty,\, \rho^\infty\in [0, \infty),$ then $\rho-\rho^\infty\in \mathcal{C}([0, \infty); L^p(\R^d))$ and $\|\rho-\rho^\infty\|_{L^p(\R^d)}$ is independent of $t\geq0.$ Also, if $\rho\equiv \bar\rho$ for some $\bar\rho\in[0, \infty)$, then $\rho_0\equiv\bar\rho$ on $[0, \infty)\times \R^d.$
\end{thmstar}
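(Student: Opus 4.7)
The strategy is the classical approximation--compactness--renormalization scheme. First, I would mollify the initial data to $(\rho_0^\varepsilon, u_0^\varepsilon) \in C^\infty$ with $\rho_0^\varepsilon \geq \varepsilon > 0$, while preserving the quantitative bounds inherited from \eqref{initialcond} and from whichever of \eqref{cond1}, \eqref{cond2}, \eqref{cond3} is assumed. Since $\rho_0^\varepsilon$ is now bounded below, the classical Kazhikov-type theory \cite{Ka74, AKM90} produces a global finite-energy weak solution $(\rho^\varepsilon, u^\varepsilon)$ of \eqref{INS} satisfying the energy inequality \eqref{enineq} uniformly in $\varepsilon$.

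The energy identity yields uniform bounds $\sqrt{\rho^\varepsilon}\, u^\varepsilon \in L^\infty_t L^2_x$ and $\nabla u^\varepsilon \in L^2_t L^2_x$, together with $0 \leq \rho^\varepsilon \leq \|\rho_0\|_{L^\infty} + \varepsilon$ via the maximum principle for the continuity equation. In dimension three, Sobolev embedding places $u^\varepsilon$ in $L^2_t L^6_x$; in dimension two, one works in $\widetilde{\mathcal D}^{1,2}$ and supplements the gradient bound with additional information on $u^\varepsilon$ coming from the structural assumption. Under \eqref{cond1} or \eqref{cond2}, a weighted Hardy-type argument, relying on the fact that $1/\rho^\varepsilon$ is integrable over a set of large measure (respectively that $\rho^\varepsilon$ is bounded below outside a set of finite measure), upgrades the a priori bound to $u^\varepsilon \in L^2_{t,x}$; these structural properties are propagated in time through the renormalized form of the continuity equation. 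Under \eqref{cond3}, one propagates the weighted $L^p$ bounds on $\rho^\varepsilon$, which in turn control the velocity at infinity. By the DiPerna--Lions theory \cite{DL} for transport by divergence-free fields with $L^2_t H^1_x$ regularity, the sequence $\rho^\varepsilon$ is strongly compact in $\mathcal C([0,T]; L^p_{\mathrm{loc}})$ for every $1 \leq p < \infty$.

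The main obstacle is passing to the limit in the degenerate nonlinear term $\rho^\varepsilon u^\varepsilon \otimes u^\varepsilon$: a direct Aubin--Lions argument on $u^\varepsilon$ fails because the momentum equation is degenerate on the vacuum set. Following Lions, one uses the momentum equation to control $\partial_t(\rho^\varepsilon u^\varepsilon)$ in a negative-regularity space---a Calder\'on--Zygmund estimate on the pressure $P^\varepsilon$, obtained by taking the divergence of the momentum equation, provides the necessary control---which combined with the spatial regularity from $\rho^\varepsilon \in L^\infty$ and $\nabla u^\varepsilon \in L^2$ yields strong compactness of $\rho^\varepsilon u^\varepsilon$ in $\mathcal C([0,T]; H^{-1}_{\mathrm{loc}})$. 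Pairing this with the weak convergence of $u^\varepsilon$ in $L^2_t H^1_{\mathrm{loc}}$ and with the strong convergence of $\rho^\varepsilon$ in $L^p_{\mathrm{loc}}$ gives $\rho^\varepsilon u^\varepsilon \otimes u^\varepsilon \to \rho\, u \otimes u$ in $\mathcal D'$, which is enough to close the limit and to recover \eqref{enineq} for $(\rho, u)$ by lower semicontinuity.

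Finally, the measure conservation \eqref{meas-density} and the invariance of $\|\rho - \rho^\infty\|_{L^p}$ follow from the renormalized formulation of the continuity equation for the limit: approximating $\mathbf{1}_{[\alpha_0,\beta_0]}$ and $|s-\rho^\infty|^p$ by admissible smooth renormalizations $\beta$ and integrating $\partial_t \beta(\rho) + \nabla \cdot(\beta(\rho)\, u) = 0$ against the constant $1$ (after a standard cutoff argument and using the integrability of $u$ or of $\beta(\rho)$ at infinity) yields time-independence of the corresponding integrals. The rigidity statement $\rho \equiv \bar\rho \Rightarrow \rho_0 \equiv \bar\rho$ is then immediate by applying \eqref{meas-density} with $\alpha_0 = \beta_0 = \bar\rho$.
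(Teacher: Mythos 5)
This statement is not proved in the paper at all: it is quoted as background from Lions' book (\cite[Theorem 2.1]{PLL}), and the authors explicitly state that they do not rely on it (they construct their own, more regular solutions in Sections \ref{s:existence3d}--\ref{s:existence2d}). So there is no in-paper proof to compare against; your plan has to be measured against Lions' original argument, and at the level of strategy it does coincide with it: regularize the data, solve the non-vacuum problem by Kazhikov-type theory, extract uniform energy bounds, use DiPerna--Lions renormalization for the density, obtain compactness of the momentum to pass to the limit in $\rho^\eps u^\eps\otimes u^\eps$, and use the renormalized continuity equation for \eqref{meas-density} and the $L^p$-conservation.

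As a proof, however, several points would fail or are left unaddressed. First, your approximation $\rho_0^\eps=\rho_0\star K_\eps+\eps\geq\eps$ is incompatible with \eqref{cond3}: adding a positive constant destroys the weighted integrability (in 2D) and the $L^{3/2,\infty}$ decay (in 3D), so the claim that the approximation preserves ``the quantitative bounds inherited from whichever condition is assumed'' breaks down precisely in the far-field-vacuum case, which is the case where uniform control of $u^\eps$ at infinity is the delicate issue; the structural conditions must instead be exploited through weighted functional inequalities of the type recalled in Proposition \ref{Prop-intp} and Proposition \ref{Le-Li-Xin}, uniformly along the approximation. Second, the compensated-compactness pairing you invoke (momentum compact in $\mathcal C_t H^{-1}_{\rm loc}$ against $u^\eps$ weakly convergent in $L^2_tH^1_{\rm loc}$) only yields $\rho^\eps u^\eps\otimes u^\eps\to\rho u\otimes u$ in $\mathcal D'$; the theorem also asserts the energy inequality \eqref{enineq}, the attainment of the initial momentum with $m_0=0$ on $\{\rho_0=0\}$, and $\rho|u|^2\in L^\infty_tL^1_x$, which require identifying the limit momentum on the vacuum set and some strong convergence of $\sqrt{\rho^\eps}u^\eps$; this is the technical heart of Lions' proof and your sketch does not address it. Third, a minor but genuine slip: the rigidity statement does not follow from \eqref{meas-density} with $\alpha_0=\beta_0=\bar\rho$, since that level set has infinite measure on both sides; one should instead apply \eqref{meas-density} to closed intervals avoiding $\bar\rho$ and take a countable union. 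Relatedly, the renormalization step needs care when the conserved quantity is infinite and when integrating the flux term at spatial infinity (under \eqref{cond3} one does not have $u\in L^2_{t,x}$), a point you only allude to with ``a standard cutoff argument.''
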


The uniqueness of global weak solutions for system \eqref{INS} remains an open question   even in the 2D case, although several weak-strong uniqueness results were established e.g.  in Lions \cite{PLL} and Germain \cite{Germain}.

\medskip

\paragraph{\underline{Uniqueness in the absence of vacuum}} 
Let us recall some recent developments on the unique solvability of the inhomogeneous incompressible Navier-Stokes equations \eqref{INS} in the absence of vacuum.   Since the density is bounded below away from zero, this case is rather close to the homogeneous flows (i.e.  density is constant).  Strong solutions were first considered by Ladyzhenskaya and Solonnikov \cite{LS} in the bounded domain case, whenever initial velocity and density are smooth enough and away from vacuum. 

After these early works, a number of papers were devoted to the study of strong solutions to system \eqref{INS}, with particular interest in classes of initial data generating regular unique solutions. Here, an important  feature for system \eqref{INS} is the scaling invariance: if  $(\rho, u,  P)$ is a solution associated to the initial data $(\rho_0, u_0)$, then $(\rho, \lambda u,   \lambda^2 P)(\lambda^2 t, \lambda x)$ is a solution  associated to  $(\rho_0, \lambda u_0)(\lambda x),$ for all $\lambda>0.$
In the critical\footnote{A functional space for the data $(\rho_0, u_0)$  or for the solution $(\rho, u)$  is said to be critical if its norm is invariant under the natural scaling of \eqref{INS}.}  regularity framework, the local and global existence results were first obtained by  Danchin  \cite{Dan1}  in the case when initial  density  has small variation, but still not including patches of density.
Abidi   \cite{A} and   Abidi and  Paicu   \cite{AP} extended these results to the case with variable viscosity in critical Besov spaces. Then, many efforts focused on removing these smallness assumptions on the density, see for example  \cite{Dan2, Cosmin, AGZ, XLZ, AG}.

For discontinuous densities,   Danchin and  Mucha \cite{DM12} first  proved   well-posedness results for data including initial density patches (that have small variation)  by a Lagrangian approach.    Later,    Paicu, Zhang and  Zhang \cite{PZZ} (see also \cite{DM13}) established  global unique solvability with only bounded initial density, in addition bounded from below;  see also further developments \cite{HPZ,  DPZ, Z, DW} on initial velocity in  critical functional spaces. These  results enable initial density of the type  $\rho_0=\rho_1\, \mathbf{1}_{\Omega}+ \rho_2\, \mathbf{1}_{\Omega^{c}},$ where  $\rho_1, \rho_2$ are positive constants, $\Omega$ is a bounded domain in $\R^d$ and  $\mathbf{1}_{\Omega}$ is the charac\-teristic function of $\Omega.$
In connection with Lions' question on the persistence of boundary regularity of $\Omega,$ we refer to the works \cite{DZ, GGJ18,  LZ1, LZ2, LZ3, GGJ23, PZ}.

\medskip

\paragraph{\underline{Uniqueness in the presence of vacuum}} 
Concerning strong solutions allowing vacuum,   local  well-posedness was  proved by Choe and   Kim \cite{CK2}  and Cho and Kim \cite{CK}  under compatibility conditions. Later that condition was removed by Li \cite{Li}.  Recently, the work of Craig, Huang and Wang \cite{CHW13}  and L\"{u}, Shi and Zhong \cite{LSZ} established global  strong solutions. All these results allow compactly supported initial densities, but still need to be  smooth enough so that discontinuous initial densities are not allowed. 

A breakthrough on the global unique solvability for  system \eqref{INS} with only bounded and nonnegative initial density was made very recently by  Danchin and   Mucha \cite{DM19} in the case where the fluid domain is either bounded or the torus.  We also mention the stability result  \cite{DMP} of the density patches problem after the paper \cite{DM19}.

\medskip

Finally, let us briefly recall the main ideas from \cite{DM19} for handling vacuum in the two-dimensional case. In order to obtain $H^1(\T^2)$ regularity for the  velocity the authors of \cite{DM19} test the momentum equation by $\partial_t u.$ It appears that the only difficult term is $\|\sqrt{\rho}u\cdot\nabla u\|_{L^2(\T^2)}^2.$  More precisely, if the density contains regions of vacuum one does not have obvious  control of $\|\sqrt{\rho} u\|_{L^p(\T^2)}$ for some $p>2,$  and this also reveals  the  lack of lower-order bound for the velocity.   However, by taking advantage of the following Desjardins interpolation inequality (see \cite{BD97-3, DM19}):
 {\begin{multline}\label{ineq-BD}
\left(\int_{\T^2} \rho |u|^4\,dx\right)^{1/2}  \\
\leq C\|\sqrt{\rho} u\|_{L^2(\T^2)}\,\|\nabla u\|_{L^2(\T^2)}\cdot  \ln^{1/2}\left(e +\frac{\|\rho-M\|_{L^2(\T^2)}^2}{M^2}+\frac{\rho_*\|\nabla u\|_{L^2(\T^2)}^2}{\|\sqrt{\rho}u\|_{L^2(\T^2)}^2}\right),
\end{multline}
which is an improvement of the well-known Ladyzhenskaya inequality, it turns out that  the  $H^1(\T^2)$ norm of the  velocity is bounded globally-in-time by its initial values. Above, $M:=\|\rho_0\|_{L^1(\T^2)}$ represents the total mass. Moreover,  thanks to the  Poincar\'{e} inequality
 \begin{align}\label{in-P}
 \|u-\bar u(t)\|_{L^2(\T^2)}\leq C\|\nabla u\|_{L^2(\T^2)} \quad{\rm with}\quad \bar u(t):=\frac{1}{|\T^2|}\int_{\T^2} u(t, x)\,dx,
 \end{align}
and the conservation laws of mass and momentum, one has $u\in L^\infty(\R_+; L^2(\T^2)).$

Unfortunately, neither   \eqref{ineq-BD} nor  \eqref{in-P} is valid in the whole-space case. This creates significant difficulties to handle   vacuum
in  our setting, especially for the far-field vacuum   in the two-dimensional case.  Moreover,   the roughness of the density    causes  additional difficulties for the uniqueness issue. Indeed,  we can only perform estimates in low regularity spaces for the following degenerate equation 
\begin{multline*}
 \underbrace{\rho(\partial_t\du +u\cdot\nabla \du)}_{\rm degenerate ~when~ there~ is ~vacuum}+\nabla\delta P- \Delta \du=\underbrace{-\dr\,\dot{\bar u}}_{\rm loss~of ~one ~derivative~already}-\rho\du\cdot\nabla \bar u;
\end{multline*}
see more remarks in the Subsection \ref{S1.4}.

\subsection{Main results}
The main goal of the present paper is to prove  regularity and uniqueness results of  weak solutions  for system \eqref{INS} with only bounded and nonnegative initial density. 
Hence our results are extensions to the whole-space of the results of Danchin and   Mucha \cite{DM19} (bounded domains or the Torus). 
We assume the additional regularity $\wt{\mathcal{D}}^{1, 2}(\R^2)$ or $\mathcal{D}^{1, 2}(\R^3)$ for the initial velocity.

\medskip

Let us now state our main results.  We first address the  two-dimensional case.

\begin{thma}[existence and uniqueness in 2D]\label{thm2d}
Consider  any initial data $(\rho_0, u_0)$ satisfying  \eqref{initialcond} such that  $\nabla u_0\in L^2(\R^2)$ and $\nabla\cdot u_0=0.$   Assume that for some constant $\rho_*>0,$
 \begin{equation}\label{cond:inidensity}
  0\leq \rho_0\leq\rho_*.
 \end{equation}
 Then, there are two cases.
 
 \smallskip
 
$\triangleright$ \underline{\emph{Case 1: $\rho_0$ satisfies either
 \eqref{cond1} or  \eqref{cond2}.}}\\
There exists a \emph{unique global-in-time solution} $(\rho, u)$ for the Cauchy problem of system \eqref{INS} supplemented with the initial data $(\rho_0, u_0)$, in the sense of Definition \ref{defweaksolu}, satisfying in addition the following  regularity properties:
\begin{align*}
0\leq \rho\leq \rho_*,\quad u\in L^\infty(\R_+; H^1(\R^2)),\quad
\sqrt{\rho}u_t, ~\Delta u,~ \nabla P\in L^2(\R_+\times \R^2),\\
 {\rm  and~for~any}~T>0,~ \sqrt{\rho t}u_t\in L^\infty(0, T; L^2(\R^2)),~\nabla(\sqrt{t} u_t)\in L^2((0, T)\times \R^2),  \\
 \nabla^2 (\sqrt{t} u),~ \nabla (\sqrt{t}P)\in L^q(0, T; L^r(\R^2)),~{\rm for~all}~r\in[2, \infty),~q\in [2,  {2r}{/(r-2)}).
\end{align*}
Moreover, we have\footnote{This $L^1_tLip_x$  regularity is essential not only for the uniqueness part of the statement, but also for Theorem \ref{C1} below.}
$\nabla u\in L^1_{\rm loc}(\R_+; L^\infty(\R^2))$.

\smallskip

$\triangleright$ \underline{\emph{Case 2: neither \eqref{cond1} nor \eqref{cond2} are satisfied.}}\\
There still exists a  \emph{unique global-in-time solution} $(\rho,u)$ for the Cauchy problem of system \eqref{INS} supplemented with the initial data $(\rho_0, u_0)$, in the sense of Definition \ref{defweaksolu}, which satisfies\footnote{Here and elsewhere in the paper $\dot{u}$ denotes the convective derivative, i.e. $\dot{u}=(\partial_t+u\cdot\nabla)u$.}
\begin{align*}
0\leq \rho\leq \rho_*,~u\in L^\infty_{\rm loc}(\R_+\times \R^2),~ \nabla u\in L^\infty(\R_+;L^2(\R^2)),\\
\sqrt{\rho} \dot{u}, ~\Delta u,~ \nabla P\in L^2(\R_+\times \R^2),\\
 \sqrt{\rho t}\dot{u}\in L^\infty(\R_+; L^2(\R^2)),~\nabla(\sqrt{t} \dot{u})\in L^2(\R_+\times \R^2),
 \end{align*}
 and for any $T>0$,
\begin{align*}
  \nabla^2 (\sqrt{t} u),~ \nabla (\sqrt{t}P)\in L^q(0, T; L^r(\R^2)),~{\rm for~all}~r\in[2, \infty),~q\in [2,  {2r}{/(r-2)}),\\
 {\rm and}~~ \sqrt{\rho }\dot{u}, ~\Delta u,~ \nabla P\in L^\infty(0, T; L^2(\R^2)),~\nabla  \dot{u}\in L^2(0, T; \R^2),
\end{align*}
provided that $(\rho_0, u_0)$  additionally satisfies the following \emph{Far-Field Vacuum} conditions\footnote{Notice that the parameter $\alpha>1$ in \eqref{condition3'} is fixed throughout the paper.}
\begin{align}\label{condition3'}
\bar x^\alpha\rho_0\in L^1(\R^2)\cap L^\infty(\R^2) \quad{\rm for~some}~\alpha>1\quad{\rm with}\quad\bar x:= \langle x\rangle\, (\ln\langle x\rangle)^2\tag{FFV-2}
\end{align}
and the \emph{Compatibility} condition
\begin{align}\label{cond-velocity}
-\Delta u_0 +\nabla P_0=\sqrt{\rho_0}g, \quad{\rm  for} ~~u_0\in L^2(\R^2),\ \nabla u_0\in L^1(\R^2),
~ g\in L^2(\R^2).\tag{Compa}
\end{align}
Moreover, we have 
$\nabla u\in L^1_{\rm loc}(\R_+; L^\infty(\R^2))$.
\end{thma}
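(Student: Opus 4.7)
\emph{Overall strategy.} Existence of a finite-energy weak solution in the sense of Definition \ref{defweaksolu} is already guaranteed by Lions' Theorem, so the real task is to upgrade regularity and to prove uniqueness. The natural route is a vanishing-viscosity-style regularization: smooth the data as $\rho_0^\varepsilon=\rho_0+\varepsilon$ and $u_0^\varepsilon$ a mollification of $u_0$, which puts us in a non-vacuum regime where global strong solutions are known. One then establishes a priori bounds that are uniform in $\varepsilon$, passes to the limit $\varepsilon\to 0$ (uniqueness of Lions' weak solution modulo our additional regularity giving compactness), and recovers the stated properties on $(\rho,u)$. Under \eqref{cond1} or \eqref{cond2} (Case 1) one has $u\in L^2$, which simplifies things; under pure far-field vacuum (Case 2) the initial velocity need not be in $L^2$, and one must rely on the weighted density bound \eqref{condition3'} and the compatibility condition \eqref{cond-velocity} as substitutes.

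\emph{Uniform a priori estimates.} The basic energy identity furnishes $\sqrt\rho u\in L^\infty(L^2)$ and $\nabla u\in L^2(L^2)$. To propagate $\nabla u\in L^\infty(L^2)$ one tests the momentum equation with $u_t$ (Case 1) or with $\dot u$ (Case 2), using the identity $\sqrt\rho\,\dot u=\nu\Delta u-\nabla P$ modulo the convective term. In Case 1 this is standard; in Case 2 the compatibility condition produces $\sqrt{\rho_0}\,\dot u|_{t=0}\in L^2$ so that this estimate starts from $t=0$ and yields $\sqrt\rho\,\dot u\in L^\infty(L^2)$. Maximal regularity for the Stokes operator with variable coefficient then upgrades these to bounds on $\Delta u$ and $\nabla P$ in $L^2(L^2)$, and, after multiplication by $\sqrt t$ to handle low initial regularity, to $L^q((0,T);L^r)$ estimates on $\nabla^2 u$ and $\nabla P$ in the stated range. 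In parallel, the weight $\bar x=\langle x\rangle(\ln\langle x\rangle)^2$ is propagated by the transport equation as soon as $\nabla u\in L^1_{\rm loc}(L^\infty)$, giving $\bar x^{\alpha}\rho\in L^\infty_{\rm loc}(L^1\cap L^\infty)$ uniformly in $\varepsilon$. This weighted control is the quantitative surrogate for $u\in L^2$ missing in Case 2 and is what allows Calder\'on-Zygmund and Biot-Savart arguments to close.

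\emph{Lipschitz estimate and uniqueness.} The pivotal fact $\nabla u\in L^1_{\rm loc}(\R_+;L^\infty(\R^2))$ is obtained by interpolating the time-weighted $L^q_tL^r_x$ bounds on $\nabla^2 u$ via an embedding that just barely fails at $(q,r)=(1,\infty)$; the logarithmic loss is absorbed using the extra weight in $\bar x$, which is why the precise form in \eqref{condition3'} has the $(\ln\langle x\rangle)^2$ factor. Once this Lipschitz regularity is in hand, uniqueness is proved in Lagrangian coordinates following Danchin-Mucha \cite{DM19}: the flow map $X$ associated to $u$ is bi-Lipschitz, $\rho(t,X(t,y))=\rho_0(y)$ is time-independent, and the difference of two candidate solutions satisfies a linear system whose Stokes part has constant-in-time coefficient $\rho_0$. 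A weighted energy estimate on this difference, with weight chosen so that the Stokes operator is invertible on $L^2(\bar x^\alpha\,dx)$, then forces the difference to vanish.

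\emph{Main obstacle.} The crux of the paper is Case 2. On bounded domains or the torus, the Danchin-Mucha approach leans on Poincar\'e's inequality and on the fact that $u\in L^2$ automatically, neither of which survives in $\R^2$ under far-field vacuum. The central new ingredient is therefore the weighted bound \eqref{condition3'}, whose weight is finely tuned to the 2D Biot-Savart logarithm so that $\rho$ remains strong enough at spatial infinity to compensate for the lack of decay of $u$; making every step of the a priori, higher-regularity, Lipschitz, and Lagrangian-uniqueness arguments compatible with this weight, while preserving scaling structure, is the main technical difficulty.
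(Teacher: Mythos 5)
Your outline of the existence part is broadly in the spirit of the paper (regularize, prove Hoff-type time-weighted estimates, shift regularity through the stationary Stokes system, deduce the $L^1_t Lip_x$ bound by interpolation), but as written it has two genuine gaps, both concentrated in Case 2. First, the approximation $\rho_0^\varepsilon=\rho_0+\varepsilon$ is incompatible with the far-field vacuum framework: for any $\varepsilon>0$ one has $\bar x^\alpha\rho_0^\varepsilon\notin L^1\cap L^\infty$, so none of the weighted bounds you later invoke (propagation of $\bar x^\alpha\rho$, weighted control of $u$, the weighted Stokes right-hand side) can be uniform in $\varepsilon$; the paper instead works on balls $B_n$ with $\rho_0\star K_\varepsilon+\frac1n e^{-|x|^2}$, precisely to keep \eqref{condition3'} (and, in Case 1, \eqref{cond1}/\eqref{cond2}) uniformly. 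Moreover, you use \eqref{cond-velocity} to start the $\sqrt{\rho}\dot u$ estimate at $t=0$, but for the approximate solutions this requires the \emph{approximate} data to satisfy a compatibility condition; constructing $u_0^{\varepsilon}$ so that this holds (and so that it still converges in $\wt{\mathcal D}^{1,2}$, with uniform bounds) is a nontrivial step — the paper solves an auxiliary Stokes problem \eqref{cond-velocity-approx} and needs the extra hypotheses $u_0\in L^2$, $\nabla u_0\in L^1$ exactly for this — and your sketch skips it. Two smaller inaccuracies: you cannot invoke Lions' theorem in Case 2 (its hypotheses are \eqref{cond1}--\eqref{cond3}, not \eqref{condition3'}+\eqref{cond-velocity}), and you cannot appeal to ``uniqueness of Lions' weak solution'' to identify the limit, since that uniqueness is open (it is part of what the theorem establishes); also the $(\ln\langle x\rangle)^2$ in $\bar x$ is tied to the weighted interpolation inequality for $\wt{\mathcal D}^{1,2}(\R^2)$ functions (Proposition \ref{Le-Li-Xin}), not to absorbing a logarithmic loss in an $L^1_tL^\infty_x$ embedding of $\nabla^2u$.

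The second, more serious gap is the uniqueness argument. You propose the Lagrangian route of \cite{DM19} with ``a weighted energy estimate \ldots with weight chosen so that the Stokes operator is invertible on $L^2(\bar x^\alpha dx)$'', but this is not an argument: in the far-field vacuum case the only energy-type control of the velocity difference is through $\sqrt{\rho}\,\delta u$, which degenerates exactly where $\rho$ vanishes — in particular in the whole far field — so the flow-map comparison underlying the Lagrangian method does not close without substantial additional input, and the paper explicitly notes that the methods of \cite{DM19} do not reach this case. The paper's proof is genuinely different: it estimates a \emph{weighted} density difference $\bar x^\beta(\rho-\bar\rho)$ in $\dot H^{-1}$ (à la Hoff), controls $\sqrt{\rho}\,\delta u$ by solving a backward dual Stokes-type problem \eqref{eq-backward}, uses the Hardy-space/BMO duality together with the logarithmic inequality \eqref{ineq-Hardy} and Osgood's lemma, and crucially feeds in the weighted $L^\infty$ bound on $u$ up to $t=0$ (Proposition \ref{Prop-2dW}), which is where the compatibility condition \eqref{cond-velocity} enters the uniqueness proof — a role your sketch never assigns to it. Without either this duality machinery or a worked-out weighted Lagrangian stability estimate that handles the degenerate weight $\sqrt{\rho}$ at infinity, the uniqueness claim in Case 2 (and, to a lesser extent, the duality-free treatment of Case 1) remains unproved in your proposal.
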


Next, we state our result in the  three-dimensional case.
\begin{thma}[existence and uniqueness in 3D]\label{thm3d}
Consider  any initial data $(\rho_0, u_0)$ satisfying  \eqref{initialcond} such that  $  u_0\in \mathcal{D}^{1, 2}(\R^3)$, $\nabla\cdot u_0=0$  and
$\rho_0$ satisfies     \eqref{cond:inidensity}.
 Then system \eqref{INS} supplemented with initial data $(\rho_0, u_0)$ admits    a \emph{unique}   solution $(\rho, u)$  on the time interval $(0, T_0)$, in the sense of Definition \ref{defweaksolu}, satisfying in addition the following  regularity properties:\footnote{The $L^1_tLip_x$  regularity below is essential for Theorem \ref{C1}.}
\begin{align*}
0\leq \rho\leq \rho_*,~ u\in L^\infty(0, T_0;  \mathcal{D}^{1, 2}(\R^3)),~
\sqrt{\rho}u_t, ~\Delta u\in L^2((0, T_0)\times \R^3),\\
P\in L^2(0, T_0; \mathcal{D}^{1, 2}(\R^3)),\ \nabla u\in L^1(0, T_0; L^\infty(\R^3)),\\
\sqrt{\rho t}u_t\in L^\infty(0, T_0; L^2(\R^3)),~ \sqrt{t} u_t\in L^2(0, T_0; \mathcal{D}^{1, 2}(\R^3)),  \\
 \nabla^2 (\sqrt{t} u),~ \nabla (\sqrt{t}P)\in L^q(0, T_0; L^r(\R^3)),~\mbox{for all}~r\in[2, 6],~q\in [2, {4r}/{(3r-6)}],
\end{align*}
  where $T_0:= \frac{c\nu^3}{\rho_*^3  \|\nabla u_0\|_{L^2(\R^3)}^4}$ for some universal constant $c$. 
Finally,  there exists a universal constant $c_0>0$ such that   if
\begin{align}\label{small3d}
\rho_*^{\frac{3}{2}}\|\sqrt{\rho_0}u_0\|_{L^2(\R^3)}\|\nabla u_0\|_{L^2(\R^3)}\leq c_0 \nu^2,
\end{align}
then the local-in-time solution  can be extended globally-in-time.
\end{thma}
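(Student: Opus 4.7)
The strategy follows the by-now classical three-step scheme for inhomogeneous incompressible flows with vanishing density: first, construct approximate solutions via a regularization of the initial density that is bounded away from zero; second, derive a priori estimates uniform in the regularization parameter that reproduce precisely the regularity claimed in the statement; third, pass to the limit by compactness. Uniqueness is then handled separately by a Lagrangian argument exploiting the $L^1_tLip_x$ bound on $\nabla u$. Concretely, I would set $\rho_0^\eps:=\rho_0+\eps e^{-|x|^2}$ (or any analogous smooth positive truncation converging to $\rho_0$) and mollify $u_0$ keeping it divergence-free; the classical theory of \cite{LS,Dan2,PZZ} then furnishes, for each $\eps>0$, a smooth strong solution $(\rho^\eps,u^\eps,P^\eps)$ on some maximal interval.

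The heart of the proof lies in the $\eps$-independent estimates. Beyond the standard Leray energy bound, testing the momentum equation against $u^\eps_t$ and controlling the convective contribution $\int\rho^\eps(u^\eps\cdot\nabla u^\eps)\cdot u^\eps_t$ via Gagliardo--Nirenberg, the Sobolev embedding $\mathcal{D}^{1,2}(\R^3)\hookrightarrow L^6(\R^3)$, and the Stokes maximal regularity
\begin{equation*}
 \|\nabla^2 u^\eps\|_{L^2}+\|\nabla P^\eps\|_{L^2}
  \leq C\sqrt{\rho_*}\bigl(\|\sqrt{\rho^\eps}u^\eps_t\|_{L^2}
  +\|\sqrt{\rho^\eps}\,u^\eps\!\cdot\!\nabla u^\eps\|_{L^2}\bigr),
\end{equation*}
produces a differential inequality of the form $\tfrac{d}{dt}\|\nabla u^\eps\|_{L^2}^2+\nu^{-1}\|\sqrt{\rho^\eps}u^\eps_t\|_{L^2}^2\leq C\rho_*^3\nu^{-3}\|\nabla u^\eps\|_{L^2}^6$. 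Integrating this inequality yields an $H^1$ bound on the claimed interval $[0,T_0]$, with $T_0=c\nu^3/(\rho_*^3\|\nabla u_0\|_{L^2}^4)$ precisely the time at which the upper solution of the Bernoulli-type ODE blows up. The time-weighted estimates on $\sqrt{\rho^\eps t}\,u^\eps_t$ and $\sqrt{t}\,u^\eps_t$ are then obtained by differentiating the momentum equation in time and testing against $t u^\eps_t$; the $L^q_tL^r_x$ bounds on $\nabla^2(\sqrt{t}\,u^\eps)$ and $\nabla(\sqrt{t}\,P^\eps)$ follow from parabolic Stokes maximal regularity applied with these time weights; and the critical Lipschitz bound $\nabla u^\eps\in L^1(0,T_0;L^\infty)$ is deduced by interpolating the above spatial bounds with Sobolev embedding $W^{1,r}\hookrightarrow L^\infty$ for $r>3$. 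Passing to the limit $\eps\to 0$ uses Aubin--Lions compactness for $u^\eps$, DiPerna--Lions renormalized theory for the continuity equation to handle $\rho^\eps$, and weak-$\star$ compactness in the time-weighted spaces.

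Uniqueness is addressed by the Danchin--Mucha Lagrangian strategy \cite{DM19} adapted to the whole space: since $\nabla u_i\in L^1_tL^\infty_x$ for two candidate solutions $(\rho_i,u_i)$, their flow maps are bi-Lipschitz, and pulling back $(\rho_i,u_i)$ along one of them converts \eqref{INS} into a perturbation of the Stokes system with density coefficient that is constant in time; a Gronwall estimate on the difference, carried out in a weighted $L^2$-type norm tailored so as not to rely on Poincar\'e, then closes the argument. Global extension under the smallness hypothesis \eqref{small3d} follows by a standard continuation argument: combining $\|\sqrt{\rho}u\|_{L^\infty_tL^2_x}\leq\|\sqrt{\rho_0}u_0\|_{L^2}$ with an interpolated version of the $H^1$ differential inequality established above, the smallness of $\rho_*^{3/2}\|\sqrt{\rho_0}u_0\|_{L^2}\|\nabla u_0\|_{L^2}/\nu^2$ prevents $\|\nabla u(t)\|_{L^2}$ from doubling, and so the local solution can be prolonged indefinitely. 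The main obstacle, in my view, is the uniform Lipschitz bound $\nabla u^\eps\in L^1_tL^\infty_x$ --- in the whole-space setting this is not accessible from $H^2$ alone and demands sharp maximal regularity for the non-stationary Stokes operator with only bounded and possibly vanishing density coefficient; this is precisely where genuine work over \cite{DM19} (which works in a bounded domain or torus and benefits from spectral gap) is required.
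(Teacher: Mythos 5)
Your existence and regularity scheme is essentially the one in the paper: mollified data with $\eps\le\rho_0^\eps\le 2\rho_*$, persistence of the $\mathcal D^{1,2}(\R^3)$ norm via testing against $u_t$ and the stationary Stokes estimate (giving exactly the Bernoulli-type inequality $\frac{d}{dt}\|\nabla u\|_{L^2}^2\lesssim \rho_*^3\|\nabla u\|_{L^2}^6$, whence $T_0\sim \nu^3\rho_*^{-3}\|\nabla u_0\|_{L^2}^{-4}$ and global continuation under \eqref{small3d} using the energy dissipation), time-weighted estimates for $u_t$, transfer to $\nabla^2(\sqrt t\,u)$, $\nabla(\sqrt t\,P)$, the Lipschitz bound by interpolation, and compactness. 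One correction on this part: your closing claim that the $L^1_tLip_x$ bound ``demands sharp maximal regularity for the non-stationary Stokes operator with only bounded and possibly vanishing density'' misidentifies the difficulty. No parabolic maximal regularity with rough density coefficient is used (nor is it available); the paper simply treats $\rho\sqrt t\,\dot u$ as a source for the \emph{stationary} Stokes system at each time, interpolates the bounds on $\sqrt{\rho t}\,u_t$ and $\sqrt{\rho t}\,u\cdot\nabla u$ in $L^q_tL^r_x$, and then gets $\|\nabla u\|_{L^\infty}\lesssim \|\nabla u\|_{L^2}^{1/4}\|\nabla^2(\sqrt t\,u)\|_{L^6}^{3/4}t^{-3/8}$ by Gagliardo--Nirenberg, with $t^{-3/8}$ integrable. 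Your own earlier steps already contain this mechanism, so the ``main obstacle'' you single out is not one.

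The genuine gap is in the uniqueness part. You propose the Lagrangian strategy of Danchin--Mucha, whereas the paper deliberately avoids it and instead runs a duality argument: the density difference is estimated in $\dot H^{-1}$ (à la Hoff), and $\sqrt\rho\,\delta u$ is controlled in $L^2_{t,x}$ by testing against the solution of a backward adjoint problem $\rho(\partial_t v+u\cdot\nabla v)+\nabla Q+\Delta v=\rho\,\delta u$, for which an a priori estimate analogous to the forward one is proved. Your sketch of the Lagrangian route (``a Gronwall estimate in a weighted $L^2$-type norm tailored so as not to rely on Poincar\'e'') leaves out precisely the points where the bounded-domain proof breaks in $\R^3$ with vacuum: after pulling back, the energy of the difference only controls $\|\sqrt{\rho_0}\,\delta v\|_{L^2}$ and $\|\nabla\delta v\|_{L^2_{t,x}}$, so the terms of the difference equation carrying no density weight (in particular the pressure terms and the commutators coming from the perturbed divergence constraint, which in \cite{DM19} are handled with bounded-domain tools such as the Bogovskii operator and Poincar\'e) cannot be absorbed as in the torus/bounded case; it is exactly this obstruction that motivates the paper's $\dot H^{-1}$/duality proof. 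As written, your uniqueness argument is an unverified adaptation of \cite{DM19} rather than a proof, and it would need either the missing whole-space substitutes for those tools or a switch to a duality-type argument of the kind used in the paper.
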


\begin{rmk}\label{rem.bddnessL2}
The initial condition $\sqrt{\rho_0}u_0, \nabla u_0\in L^2(\R^d)$ together with assumption \eqref{cond1} or \eqref{cond2} imply that $u_0\in L^2(\R^d)$, $d=2$ or $3$. Under \eqref{cond1} or \eqref{cond2}, there exists a positive constant $C_*$ depending only on the factors in condition \eqref{cond1} or \eqref{cond2} such that on the time interval $(0,\infty)$ ($d=2$) and on the life-span $[0, T_*)$ of the solution in Theorem \ref{thm3d} ($d=3$), inequality \eqref{Prop-intp} implies
\begin{align}\label{es-lower1}
\|u(t, \cdot)\|_{L^2(\R^d)}\leq C_*\big(\|\sqrt{\rho}u(t,\cdot)\|_{L^2(\R^d)}+\|\nabla u(t,\cdot)\|_{L^2(\R^d)}\big).
\end{align}
In the two-dimensional far-field vacuum case, i.e. under \eqref{condition3'} and \eqref{cond-velocity}, we also have $u_0\in L^2(\R^2)$ by assumption. However, we cannot show that $u(t,\cdot)\in L^2(\R^2)$. Indeed, instead of the interpolation inequality, we have the weighted estimate \eqref{e.estprop14}.
\end{rmk}

As a by-product, we obtain the following result, which give a positive answer to  Lions' question (\cite[page 34]{PLL}, see above in the preamble) in the whole-space case. As mentioned above the case of bounded domains or the Torus was solved in \cite{DM19}.

\begin{thma}[solution to Lions' problem]\label{C1}
Assume $\gamma\in(0, 1)$ if $d=2$, and $\gamma\in(0, \frac{1}{2})$ if $d=3.$  Let $\Omega_0\subset\R^d$   be a bounded simply connected domain with boundary  $\partial \Omega_0\in \mathcal{C}^{1, \gamma}$.  Suppose that the initial velocity satisfies all the conditions in Theorem \ref{thm2d} or Theorem \ref{thm3d}, and  initial density 
$$\rho_0(x)= 1-\mathbf{1}_{\Omega_0}(x)\quad{\rm or}\quad \rho_0(x)= \mathbf{1}_{\Omega_0}(x), \quad x\in\R^d.$$
Then for each case the unique global solution $(\rho, u)$ of system \eqref{INS} provided by previous theorems satisfies, respectively,
\begin{align*}
\rho(t, x)= 1-\mathbf{1}_{\Omega_t}(x)   \quad{\rm or}\quad  \rho(t, x)= \mathbf{1}_{\Omega_t}(x) \quad {\rm with}\quad\partial \Omega_t \in \mathcal{C}^{1, \gamma},
\end{align*}
where  $\Omega_t:= X(t, \Omega_0)$  and $X$  is the flow associated to the velocity $u,$ that is, the unique solution to
\begin{align*}
X(t, y)= y+ \int_0^t u(s, X(s, y))\,ds, \quad y\in \R^d.
\end{align*}
\end{thma}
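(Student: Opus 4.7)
The plan is to adapt Chemin's method of tangential regularity for vortex patches (implemented for the inhomogeneous Navier--Stokes system in bounded domains by Danchin--Mucha \cite{DM19}) to the whole space, using the velocity regularity supplied by Theorems \ref{thm2d} and \ref{thm3d}.

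\textbf{Step 1: shape of the density.} First I would show that $\rho(t,\cdot) = \mathbf{1}_{\Omega_t}$ (resp.\ $1 - \mathbf{1}_{\Omega_t}$) without any regularity claim on $\partial \Omega_t$ beyond the Lipschitz level. Theorems \ref{thm2d}--\ref{thm3d} supply $\nabla u \in L^1_{\rm loc}(\R_+; L^\infty(\R^d))$; combined with $\nabla \cdot u = 0$, Cauchy--Lipschitz yields a unique bi-Lipschitz, measure-preserving flow $X(t,\cdot):\R^d\to\R^d$, so $\Omega_t := X(t, \Omega_0)$ is well defined. DiPerna--Lions uniqueness for the transport equation with bounded data and Lipschitz drift then gives $\rho(t, X(t,y))=\rho_0(y)$, i.e.\ $\rho(t,\cdot)=\rho_0\circ X(t,\cdot)^{-1}$, which is the claimed characteristic function.

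\textbf{Step 2: striated tangential fields.} To upgrade to $\mathcal{C}^{1,\gamma}$ I would use Chemin's tangential-vector-field framework. Select a finite family $(W_0^\lambda)_{\lambda\in\Lambda}$ of divergence-free $\mathcal{C}^\gamma(\R^d)$ vector fields, compactly supported near $\partial\Omega_0$, tangent to $\partial\Omega_0$, and \emph{non-degenerate} in the sense $\inf_{x\in\partial\Omega_0}\max_\lambda |W_0^\lambda(x)|>0$; existence of such a family is equivalent to $\partial\Omega_0\in\mathcal{C}^{1,\gamma}$. The push-forward $W^\lambda(t,\cdot):=(DX(t)\,W_0^\lambda)\circ X(t)^{-1}$ remains divergence-free, tangent to $\partial\Omega_t$, and satisfies
\begin{equation*}
\partial_t W^\lambda + u\cdot\nabla W^\lambda = \partial_{W^\lambda} u.
\end{equation*}
If I can prove $W^\lambda\in L^\infty_{\rm loc}(\R_+;\mathcal{C}^\gamma)$ while non-degeneracy is preserved---the latter is immediate from the bi-Lipschitz bound on $X$---then $\partial\Omega_t\in\mathcal{C}^{1,\gamma}$ follows at once.

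\textbf{Step 3: closing the striated estimate.} The crux, and the main obstacle, is estimating $W^\lambda$ in $\mathcal{C}^\gamma$: a naive transport inequality would require $\nabla u\in L^1_t \mathcal{C}^\gamma$, which is unavailable. Chemin's insight is that only the \emph{striated} derivative $\partial_{W^\lambda}u$ needs $\mathcal{C}^\gamma$ control. Rewriting the momentum equation as the Stokes problem $-\nu\Delta u +\nabla P = -\rho\dot u$, a paradifferential decomposition together with Chemin's commutator lemma (showing that $[\partial_{W^\lambda},\nabla(-\Delta)^{-1}\mathrm{div}]$ gains one derivative) yields
\begin{equation*}
\|\partial_{W^\lambda}\nabla u\|_{B^{\gamma-1}_{\infty,\infty}} \lesssim \|W^\lambda\|_{\mathcal{C}^\gamma}\bigl(\|\nabla u\|_{L^\infty}+\|\rho\dot u\|_{L^r}\bigr)
\end{equation*}
for a suitable $r=r(d)$. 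A Gronwall argument then closes the estimate using the time integrability of $\|\nabla u\|_{L^\infty}$, $\|\sqrt{\rho}\,\dot u\|_{L^2}$ and $\|\nabla P\|_{L^q_t L^r_x}$ supplied by Theorems \ref{thm2d} and \ref{thm3d}, globally in 2D and on the lifespan in 3D. The secondary obstacle, specific to the whole-space setting, is that domain-based cutoffs must be replaced by Littlewood--Paley localization and the pressure handled in homogeneous Besov spaces $\dot B^{\gamma}_{\infty,\infty}(\R^d)$ via the boundedness of the Leray projection there. The dimension-dependent thresholds $\gamma<1$ ($d=2$) and $\gamma<1/2$ ($d=3$) correspond exactly to those allowed by the Sobolev embedding $W^{1,r}(\R^d)\hookrightarrow\mathcal{C}^{1-d/r}(\R^d)$ applied with the best $r$ for which $\nabla P$ is controlled in $L^1_{{\rm loc},t}L^r_x$ in each theorem.
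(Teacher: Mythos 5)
Your Steps 1 and 2 are sound (the identification $\rho(t,\cdot)=\mathbf{1}_{\Omega_t}$, resp. $1-\mathbf{1}_{\Omega_t}$, via the Lipschitz, measure-preserving flow and uniqueness for the transport equation is correct, and is implicit in the paper), but the pivotal Step 3 is where the proof actually lives, and there your argument is only asserted. The inequality $\|\partial_{W^\lambda}\nabla u\|_{B^{\gamma-1}_{\infty,\infty}}\lesssim \|W^\lambda\|_{\mathcal{C}^\gamma}\bigl(\|\nabla u\|_{L^\infty}+\|\rho\dot u\|_{L^r}\bigr)$ is not proved, and as stated it is doubtful: if $r$ is large enough that $L^r(\R^d)\hookrightarrow B^{\gamma-1}_{\infty,\infty}(\R^d)$, i.e. $\gamma\leq 1-d/r$, then the stationary Stokes estimate already gives $\nabla^2u\in L^1_{\rm loc}(0,T;L^r)$, hence $\nabla u\in L^1(0,T;\mathcal{C}^{0,\gamma})$ by Sobolev embedding, and the entire striated/tangential machinery (commutator lemma, paraproducts, propagation of $W^\lambda$ in $\mathcal{C}^\gamma$) is superfluous; if instead $r$ is below that threshold, an estimate with only $\|\rho\dot u\|_{L^r}$ on the right-hand side cannot control the striated derivative without additional tangential information on the source $\rho\dot u$ (this is exactly what makes genuine vortex-patch arguments delicate), and your Gronwall argument does not close. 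Your final remark, that the thresholds $\gamma<1$ ($d=2$) and $\gamma<1/2$ ($d=3$) are those of the embedding $W^{1,r}\hookrightarrow\mathcal{C}^{0,1-d/r}$ for the best available $r$, in effect concedes that the solutions of Theorems \ref{thm2d} and \ref{thm3d} are regular enough to dispense with tangential regularity altogether.

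That direct route is precisely the paper's (much shorter) proof of Theorem \ref{C1}: write $\partial\Omega_0=\{f_0=0\}$ for a $\mathcal{C}^{1,\gamma}$ function $f_0$, so that $\partial\Omega_t=\{f_t=0\}$ with $f_t=f_0\circ(X(t,\cdot))^{-1}$, the inverse flow existing and being as regular as $X$ because incompressibility forces the Jacobian to equal $1$. The time-weighted bounds $\nabla^2(\sqrt{t}\,u)\in L^q(0,T;L^r)$ from Theorems \ref{thm2d} and \ref{thm3d} give, upon choosing $q>2$ so that $t^{-1/2}\in L^{q'}(0,T)$ (together with the $L^1_tL^2_x$ control of $\nabla u$ and $\Delta u$), $\nabla^2u\in L^1(0,T;L^r)$ for all finite $r$ in 2D and all $r<6$ in 3D; Sobolev embedding then yields $\nabla u\in L^1(0,T;\mathcal{C}^{0,\gamma})$ for every $\gamma<1$, resp. $\gamma<1/2$, whence $X(t,\cdot)^{\pm1}\in\mathcal{C}^{1,\gamma}$ and $f_t\in\mathcal{C}^{1,\gamma}$, which is the claim. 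So the gap in your proposal is concrete: the key striated estimate (including the handling of the pressure and of $\partial_{W^\lambda}(\rho\dot u)$) is never established, and repairing Step 3 in the only regime where it can work collapses your argument to the paper's direct one.
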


\begin{rmk}
Here, contrary to Theorem \ref{thm2d} and Theorem \ref{thm3d}, only one bubble/patch is allowed. As a consequence, the case of multiple bubbles/patches is open.
\end{rmk}

\subsection{A few remarks on the main results} 

\paragraph{\underline{Main novelties}} 
First, our results settle the uniqueness question for the difficult two-dimensional system in the presence of far-field vacuum. Notice that this is a novelty of the whole-space case because in the bounded domain and Torus case treated previously by Danchin and Mucha \cite{DM19} there is no far-field vacuum. Notice that the far-field vacuum case is not accessible by the methods of \cite{DM19}. Indeed, our work seems to be the first unique solvability  result concerning \eqref{INS}  in unbounded domains supplemented with merely bounded densities that can contain vacuum. Moreover, the strategy followed in the current paper maybe  adapted to the half-space case \cite{DPZ}, which remains an interesting problem in the field.

Second, let us stress that in Theorem \ref{thm2d} and Theorem \ref{thm3d}, the density is bounded but does not satisfy any smoothness assumption. Notice that this is not the case of other works concerned with the whole-space. For example, in \cite{LSZ} the initial density satisfies the condition 
\begin{equation*}
\bar x^\alpha\rho_0\in L^1(\R^2)\cap H^1(\R^2)\cap W^{1,p}(\R^2),\quad p>2,
\end{equation*}
while in \cite{CK} the authors assume that $\rho_0\in W^{1,p}(\R^d)$, with $p>d$, $d=2$ or $3$. 
These conditions do not allow for non smooth densities with jump discontinuities like patches or vacuum bubbles.

Third, we emphasize that in both theorems, Theorem \ref{thm2d} and Theorem \ref{thm3d},  we construct finite-energy weak solutions in the sense of Definition \ref{defweaksolu}. These solutions have additional regularity properties that follow from the assumption that the initial velocity belongs to the spaces $\wt{\mathcal{D}}^{1, 2}(\R^2)$ or $\mathcal{D}^{1, 2}(\R^3)$. In this framework, we can prove the uniqueness of the solutions that we construct. Notice that we do not rely on Lions' theorem of existence of finite-energy weak solutions (Theorem \ref{theom-Th-PLL} above), but prove the existence of the solutions with the properties stated above, see Section \ref{s:existence3d} for the existence part of Theorem \ref{thm3d} in the three-dimensional case, and see Section \ref{s:existence2d} for the existence part of Theorem \ref{thm2d} in the two-dimensional case. Finally, let us also remark that condition \eqref{cond3} is not needed neither in 2D nor 3D. In 2D, the far-field vacuum condition \eqref{cond3} is replaced by the condition \eqref{condition3'} and \eqref{cond-velocity}.

\medskip

\paragraph{\underline{Some further estimates}}
We state here certain further boundedness properties for the velocity that can be obtained from the estimates of the paper.

In 3D under \eqref{cond1} or \eqref{cond2}, we already know, see Theorem \ref{thm3d} and Remark \ref{rem.bddnessL2} that $u\in L^\infty_tL^2_x$ on the life-span of the solution. This in combination with the techniques of the paper and the following compatibility assumption 
\begin{align*}
-\Delta u_0 +\nabla P_0=\sqrt{\rho_0}g, \quad{\rm  for} ~ g\in L^2(\R^2).\tag{Compa-3D}
\end{align*}
gives that $u\in L^\infty_{t,x}$ on the life-span of the solution.

In 2D under \eqref{cond1} or \eqref{cond2}, we already know, see Theorem \ref{thm2d} and Remark \ref{rem.bddnessL2} that $u\in L^\infty(0,\infty;L^2(\R^2))$. Combining this with the assumption \eqref{cond-velocity} gives that $u\in L^\infty(\R_+\times \R^2)$. In the case when neither \eqref{cond1} nor \eqref{cond2}, we get at best the weighted-boundedness of the velocity
\begin{equation}\label{e.weighteduLinfty}
\sup_{t\in[0, T]}\| \bar{x}^{-b} {u}(t, \cdot) \|_{L^\infty(\R^2)}<\infty,
\end{equation}
see Proposition \ref{Prop-2dW} under \eqref{condition3'} and \eqref{cond-velocity}. 

\subsection{Main difficulties and strategy for the proofs}\label{S1.4}

First of all, let us point to the main difficulties. Those emerge from the facts that: (i) the density is rough (our analysis includes density patches and vacuum bubbles) and (ii) the density may vanish on some part of $\R^d$. 

\medskip

\paragraph{\underline{Dealing with the vacuum}} 
If there is vacuum at initial time, the velocity equation is degenerate and it becomes particularly difficult to gain control of the velocity itself. It is important to note here that there is an essential difference between the two- and three-dimensional cases. In the three-dimensional case, see Theorem \ref{thm3d}, we can propagate the assumption that $u_0\in \mathcal{D}^{1, 2}(\R^3)$ and hence show that $u\in L^\infty((0,T_0); \mathcal{D}^{1, 2}(\R^3))$. Hence $u\in L^\infty((0,T_0);L^6(\R^3))$ by Sobolev's embedding. In the two-dimensional case in the presence of vacuum, the quantity that is naturally controlled by the energy estimate \eqref{enineq} is $\sqrt{\rho}u$, which generally does not ensure that the velocity $u\in L^p(\R^2)$  for some $p\in[1, \infty]$. 
Hence, one needs to find a way to control $u$ from $\|\sqrt{\rho}u(t,\cdot)\|_{L^2(\R^2)}$ and $\|\nabla u(t,\cdot)\|_{L^2(\R^2)}$. Such estimates can be proved, see Appendix \ref{A}, under specific assumptions that control the size of the vacuum region and the behavior of the density near the vacuum region. In the most favorable case, i.e. condition \eqref{cond1} or \eqref{cond2}, one can show a direct interpolation estimate for the $L^2$ norm of the velocity in terms of $\|\sqrt{\rho}u(t,\cdot)\|_{L^2(\R^2)}$ and $\|\nabla u(t,\cdot)\|_{L^2(\R^2)}$, see Proposition \ref{Prop-intp}. In the case of the far-field vacuum assumption, which is the most difficult case handled in our work, one can only show an estimate for weighted $L^m$ norms of the velocity, $m\geq 2$, see Proposition \ref{Le-Li-Xin}.

\medskip

\paragraph{\underline{The compatibility condition for the far-field vacuum case}}

A compatibility condition in the spirit of \eqref{cond-velocity} first appeared in the works \cite{CK,CK2}. Roughly speaking, such a condition boils down to assuming that $\sqrt{\rho}\partial_tu$ belongs to $L^2(\R^2)$ at initial time. The compatibility condition  was removed in the recent papers \cite{Li,LSZ}. In these works the existence and uniqueness of strong solutions is proved. However, without a compatibility condition, one is unable to infer certain information, such as continuity in time, on the velocity near initial time. One barely has time weighted estimates on the velocity.

To handle the two-dimensional far-field vacuum (Theorem \ref{thm2d} (Case 2)), which is the most difficult case, we show persistence of the initial condition \eqref{condition3'}. Doing so, we can only obtain bounds for $\rho u$ and time-weighted estimates of $u$. 
Condition \eqref{cond-velocity} is then crucial to get the space-weighted boundedness of the velocity near initial time, see \eqref{e.weighteduLinfty} and Proposition \ref{Prop-2dW}, which in turn is key for the uniqueness in Subsection \ref{subsec.2dfarfieldunique}.

The compatibility condition \eqref{cond-velocity} creates important difficulties when trying to approximate the initial data to construct an approximate sequence of smooth solutions. Our construction in Subsubsection \ref{subsubsec.approxffv} is inspired by \cite{CK2}. However, since we apply existence results for smooth data, we need to further regularize the right-hand-side $\sqrt{\rho_0}g$ of \eqref{cond-velocity}. We then require that $u_0\in L^2(\R^2)$ and $\nabla u_0\in L^1(\R^2)$ so as to be able to show the appropriate convergence and boundedness properties of the constructed sequence of approximate data.

\medskip

\paragraph{\underline{Outline of the proofs}}
The general strategy is that of proofs \`a la Hoff \cite{Hoff}. 

We first get lower-order estimates on the velocity. We give propagate the $\wt{\mathcal D}^{1,2}(\R^2)$ or the $\mathcal D^{1,2}(\R^3)$ regularity of the initial data for the velocity. In the three-dimensional case, this estimate directly yields a $L^\infty_tL^6_x$ bound on $u$. In the two-dimensional case, these estimates, in combination with the interpolation estimate of Proposition \ref{Prop-intp} (Case 1 of Theorem \ref{thm2d}) or the weighted interpolation estimate of Proposition \ref{Prop-2dW} (Case 2 of Theorem \ref{thm2d}), lead to estimates of the velocity itself. 

The next step is to get higher-order estimates on the velocity. We carry out time-weighted estimates of $\sqrt{\rho}\dot u$. Finally we transfer (shift) the integrability of $\sqrt{\rho}\dot u$ to second-order derivatives of the velocity via maximal regularity estimates for the stationary Stokes system. This enables us to get the crucial Lipschitz estimate for the velocity.

The Lipschitz estimate for the velocity is key to the uniqueness proofs. To show the uniqueness, we carry out a duality proof, which is a new approach in this context. In particular our approach differs from the Lagrangian approach used in \cite{DM19}.

For more details concerning the relationship between the results in the paper, see Figure \ref{fig.diagrammresults}.

\begin{figure}[h]
\begin{center}
\includegraphics[scale=.75,trim=0 4cm 0 0]{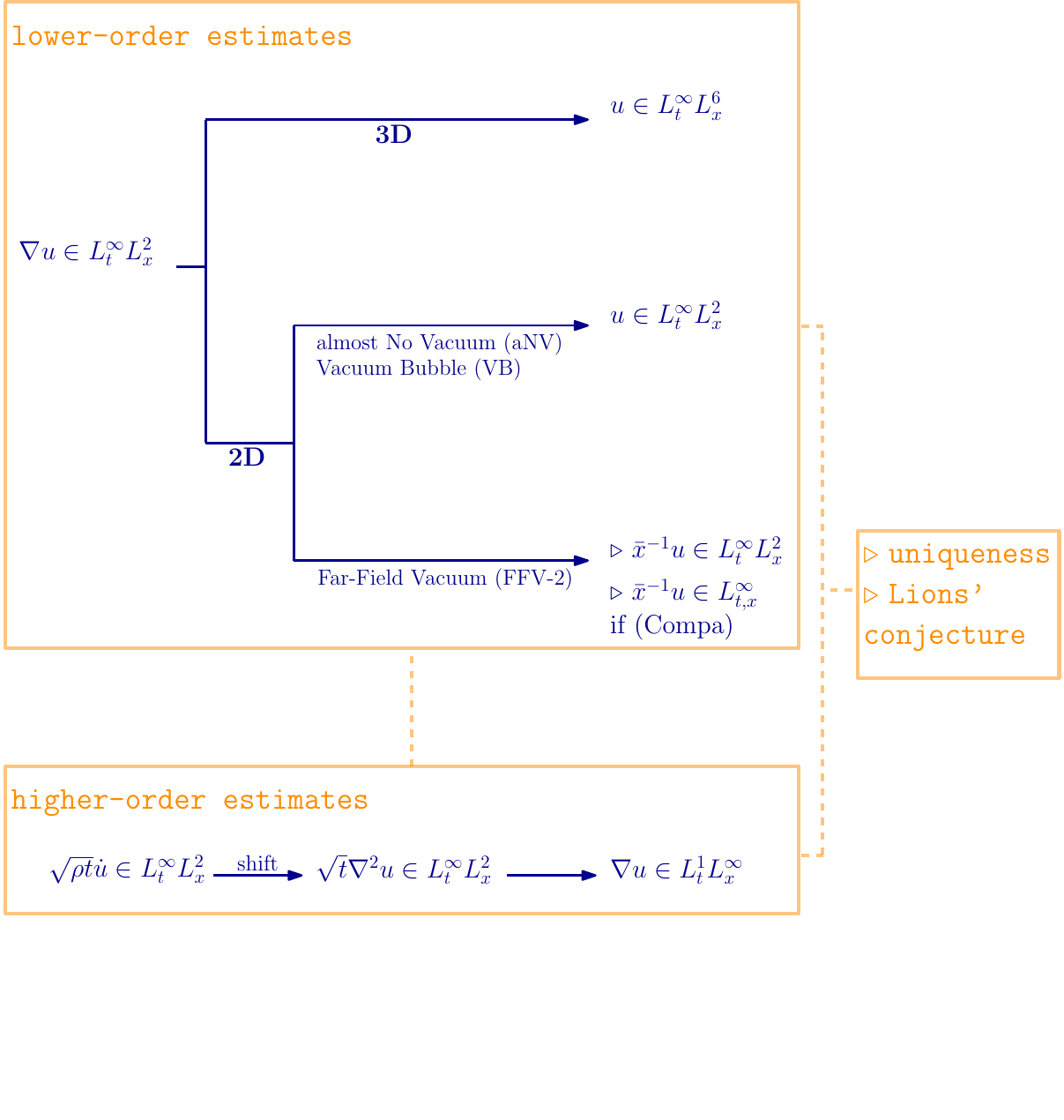}
\caption{Relationships between the results in the paper}
\label{fig.diagrammresults}
\end{center}
\end{figure}

\subsection{Some notations and assumptions}
Throughout, we use $B_n$ to denote the ball $\{x\in \R^d; ~ |x|\leq n\}.$ The notation $(f)_+=\max(f,0)$ stands for the positive part of a function $f$. The japanese bracket is defined as follows $\langle x\rangle:= (e+|x|^2)^{1/2}$ and we recall that the notation $\bar x$ stands for $\bar x:=\langle x\rangle\, (\ln\langle x\rangle)^2$.
We define material derivative $\dot{v}:=\partial_t v+u\cdot\nabla v$ for functions $v:\R_+\times\R^d\to \R^d.$
Sometimes,   we denote $\partial_t v$ by $v_t.$
For simplicity, we will fix the viscosity to be $1$ thanks to a standard rescaling argument.

\subsection{Outline of the paper}
Section \ref{s:existence3d} is concerned with the three-dimensional case. It is devoted to the proof of the existence part of Theorem \ref{thm3d}. Section \ref{s:existence2d} handles the two-dimensional case. It is devoted to the existence part of Theorem \ref{thm2d}. The proof is more involved, especially in the far-field vacuum case, than in 3D. The proof of  uniqueness partis of Theorem \ref{thm2d} and Theorem \ref{thm3d}  is carried out in Section \ref{s:uniqueness}. Theorem \ref{C1} is proved in Section \ref{B}. Some frequently used functional spaces and essential interpolation inequalities are recalled in  Appendix \ref{A}.


\section{Proof of the existence results in the three-dimensional case, Theorem \ref{thm3d}}\label{s:existence3d}
This section mainly concerns the  proof of the existence part in Theorem \ref{thm3d} (3D case).
The idea is to take advantage of classical results to  construct smooth approximate solutions without vacuum. After that,   we show  persistence of $\mathcal{D}^{1, 2}(\R^3)$ regularity and
 some  time-weighted  estimates for  time derivatives of the velocity  independent of the lower bounds of the density.   Those estimates will  enable us to   shift the regularity to obtain $L^1_tLip_x$ estimates for the velocity, which is   important for our proof of uniqueness.  Finally,  we pass to the limit via classical compactness argument.
 
\medskip

  Recall that the initial data $(\rho_0, u_0)$  satisfies conditions \eqref{initialcond} and \eqref{cond:inidensity}, and that $  u_0\in \mathcal{D}^{1, 2}(\R^3), \nabla\cdot u_0=0.$
  Thus, we consider smoothed-out  initial data
  \begin{align*}
  \rho_0^\eps\in \mathcal{C}^\infty(\R^3)~\,\,{\rm with}~\,\,\eps\leq \rho_0^\eps\leq 2\rho_*\and u_0^\eps\in \mathcal{C}_0^\infty(\R^3) ~\,\,{\rm with}~\,\,\nabla\cdot u_0^\eps=0
  \end{align*}
such that as $\eps\to 0^+$
\begin{align*}
u_0^\eps \to u_0\quad{\rm in}~~\mathcal{D}^{1, 2}(\R^3),\quad \sqrt{\rho_0^\eps}u_0^\eps\,\rightharpoonup\, \sqrt{\rho_0}u_0 \quad{\rm in}~~L^2(\R^3)
\end{align*}
and
\begin{align*}
\rho_0^\eps \rightharpoonup \rho_0\quad{\rm weak*~ in}~~L^{\infty}(\R^3).
\end{align*}
Then, by the strong solution theory  (e.g. \cite[Theorem 0.2]{Dan2}), we know that there exists a local-in-time strong solution $(\rho^\eps, u^\eps)$ to \eqref{INS}.

\subsection{Uniform estimates}
 In what follows, we focus on uniform estimates for the approximate solutions  $(\rho^\eps, u^\eps)$.
We often use the following Sobolev embedding  
\begin{equation}\label{sobolevem}
 \| z\|_{L^6(\R^3)}   \leq C\|\nabla z \|_{L^2(\R^3)}\quad {\rm for } ~~z\in \dot{H}^1(\R^3),
\end{equation}
with a universal positive constant $C$. For notational simplicity,  we  omit the superscript  $\eps$ and write the solution $(\rho,u)$.

 \subsubsection{Persistence of $\mathcal{D}^{1, 2}{(\R^3)}$ regularity for the velocity}
\begin{prop}[gradient estimate]\label{prop-H13d}
Let $(\rho, u)$ be a smooth enough  solution to system \eqref{INS} on $[0, T_*)\times \R^3.$   There exist  positive  universal constants $c_0, C$ such that  if
\begin{equation}\label{smallcond3d}
\rho_*^\frac{3}{2}\|\sqrt{\rho_0}u_0\|_{L^2(\R^3)}\|\nabla u_0\|_{L^2(\R^3)}\leq c_0
\end{equation}
then   it holds for all $t\in[0, T_*)$
\begin{multline}\label{es-2-3d}
\|  u(t, \cdot)\|_{\mathcal{D}^{1, 2}(\R^3)}^2+2\int_{0}^t\|(\sqrt{\rho} {u}_t, \Delta u)(s, \cdot)\|_{L^2(\R^3)}^2\,ds+ 2\int_{0}^t\|P(s, \cdot)\|_{\mathcal{D}^{1, 2}(\R^3)}^2\,ds \\
\leq \|\nabla u_0
\|_{L^2(\R^3)}^2\exp(C\|\sqrt{\rho_0}u_0\|_{L^2(\R^3)}^2\|\nabla u_0\|_{L^2(\R^3)}^2).
\end{multline}
If \eqref{smallcond3d} is not satisfied, then \eqref{es-2-3d} holds true  on $[0, T]$ provided that
 \begin{equation}\label{time3d}
 T\leq \frac{c}{\rho_*^3\|\nabla u_0\|_{L^2(\R^3)}^4}\quad {\rm for~some ~universal~constant}~c.
 \end{equation}
\end{prop}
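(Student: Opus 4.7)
The plan is to derive a scalar differential inequality for $y(t):=\|\nabla u(t,\cdot)\|_{L^2(\R^3)}^2$. Testing the momentum equation of \eqref{INS}, rewritten as $\rho\dot u+\nabla P=\Delta u$ thanks to the mass equation and $\nabla\cdot u=0$, with $\partial_t u$, the pressure term vanishes by incompressibility and $-\int\Delta u\cdot u_t=\tfrac12\frac{d}{dt}\|\nabla u\|_{L^2}^2$, yielding
\begin{equation*}
\tfrac{1}{2}\tfrac{d}{dt}\|\nabla u\|_{L^2(\R^3)}^2+\|\sqrt{\rho}\,u_t\|_{L^2(\R^3)}^2=-\int_{\R^3}\rho(u\cdot\nabla)u\cdot u_t\,dx.
\end{equation*}
In parallel, I would view the momentum equation as a stationary Stokes system with source $F:=-\rho u_t-\rho(u\cdot\nabla)u$ and apply the standard $L^2$ maximal regularity estimate $\|\nabla^2 u\|_{L^2}+\|\nabla P\|_{L^2}\leq C\|F\|_{L^2}$. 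Using $\rho\leq\rho_*$, the Sobolev embedding \eqref{sobolevem} and the three-dimensional interpolation $\|\nabla u\|_{L^3}\leq C\|\nabla u\|_{L^2}^{1/2}\|\nabla^2 u\|_{L^2}^{1/2}$, I would absorb the $\|\nabla^2 u\|_{L^2}^{1/2}$ factor by Young's inequality and reach
\begin{equation*}
\|\Delta u\|_{L^2}^2+\|\nabla P\|_{L^2}^2\leq C\rho_*\|\sqrt{\rho}\,u_t\|_{L^2}^2+C\rho_*^4\|\nabla u\|_{L^2}^6.
\end{equation*}

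With the same Sobolev--interpolation toolbox, the right-hand side of the second energy identity is controlled by $\tfrac12\|\sqrt{\rho}u_t\|_{L^2}^2+C\rho_*\|\nabla u\|_{L^2}^3\|\Delta u\|_{L^2}$; plugging in the Stokes bound above and using Young once more yields the key differential inequality
\begin{equation*}
\tfrac{d}{dt}\|\nabla u\|_{L^2}^2+\|\sqrt{\rho}\,u_t\|_{L^2}^2+c\bigl(\|\Delta u\|_{L^2}^2+\|\nabla P\|_{L^2}^2\bigr)\leq C\rho_*^3\|\nabla u\|_{L^2}^6,
\end{equation*}
that is $y'+(\text{good terms})\leq C\rho_*^3 y^3$. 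From this, the local estimate without smallness is immediate: $\tfrac{d}{dt}y^{-2}\geq -2C\rho_*^3$ integrates to $y(t)^{-2}\geq y(0)^{-2}-2C\rho_*^3 t$, so the bound \eqref{es-2-3d} holds on $[0,T]$ as soon as $T\leq c/(\rho_*^3\|\nabla u_0\|_{L^2}^4)$, which is exactly \eqref{time3d}. For the global statement under \eqref{smallcond3d}, I would combine with the basic Lions energy identity, which gives $\int_0^\infty y(s)\,ds\leq\tfrac12\|\sqrt{\rho_0}u_0\|_{L^2}^2$, and rewrite the ODI as $y'\leq C\rho_*^3 y^2\cdot y$. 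A continuity/bootstrap argument on the set $\{t:y(t)\leq 2y(0)\}$ combined with Grönwall then produces $y(t)\leq y(0)\exp\bigl(C\rho_*^3\,y(0)\int_0^t y\,ds\bigr)\leq y(0)\exp\bigl(C\rho_*^3\|\nabla u_0\|_{L^2}^2\|\sqrt{\rho_0}u_0\|_{L^2}^2\bigr)$; the smallness \eqref{smallcond3d} forces the exponent to be uniformly bounded, the bootstrap closes, and time-integrating the good terms concludes the exponential estimate \eqref{es-2-3d}.

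The main obstacle is that the Stokes maximal regularity estimate and the forcing in the second energy identity share the \emph{same} two dangerous contributions, namely $\|\sqrt{\rho}u_t\|_{L^2}^2$ and the convection $\|\rho(u\cdot\nabla)u\|_{L^2}^2$; applying Young's inequality in the wrong order produces a circular estimate in which neither $\|\Delta u\|_{L^2}$ nor $\|\sqrt{\rho}u_t\|_{L^2}$ can be separated. Splitting the small constants carefully, and tracking the exact power of $\rho_*$ so that the right-hand side ends up as $\rho_*^3 y^3$ (matching exactly the threshold \eqref{smallcond3d}), is the delicate part of the computation.
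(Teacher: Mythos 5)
Your proposal is correct and follows essentially the same path as the paper: testing the momentum equation with $u_t$, controlling $\|\Delta u\|_{L^2}$ and $\|\nabla P\|_{L^2}$ by the stationary Stokes $L^2$ estimate, using \eqref{sobolevem} together with $\|\nabla u\|_{L^3}\lesssim\|\nabla u\|_{L^2}^{1/2}\|\Delta u\|_{L^2}^{1/2}$ and Young to reach the Riccati-type inequality $y'\lesssim\rho_*^3y^3$ for $y=\|\nabla u\|_{L^2}^2$, then explicit integration for \eqref{time3d} and the energy bound on $\int_0^t y$ under \eqref{smallcond3d} for the global estimate. The only (cosmetic) deviation is that you close the small-data case by a bootstrap plus linear Gr\"onwall on the set $\{y\leq 2y(0)\}$, whereas the paper integrates the Riccati inequality directly to get $y(t)\leq y(0)\big(1-C\rho_*^3y(0)\int_0^t y\,ds\big)^{-1}$; both yield the same conclusion, with the same (shared with the paper) looseness in the constants of \eqref{es-2-3d}.
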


\begin{proof}
In order to estimate the second derivative of $u$ and the gradient of the pressure, we rewrite the momentum equation of \eqref{INS} in the form
\begin{equation}\label{sstokes}
\left\{\begin{aligned}
&-\Delta u+\nabla P=-\rho\dot{u}\hspace*{1.17cm} {\rm in}~ (0, T)\times \R^3,\\
&\nabla\cdot u=0 \hspace*{2.99cm}{\rm in}~ (0, T)\times \R^3,\\
&u\to 0,\quad{\rm as}~|x|\to \infty.
\end{aligned}\right.
\end{equation}
Applying the standard $L^2$-estimate to \eqref{sstokes} yields that
\begin{align}\label{P1-1}
\|\Delta u\|_{L^2(\R^3)^2}^2+\|\nabla P\|_{L^2(\R^3)}^2\leq  C \Big( {\rho_*}\|\sqrt{\rho} u_t\|_{L^2(\R^3)}^2+\rho_*^2 \|u\cdot\nabla u \|_{L^2(\R^3)}^2\Big).
\end{align}
Testing the momentum equation of \eqref{INS} against ${u_t}$ yields
\begin{align*}
\frac{1}{2}\frac{d}{dt}\|\nabla u\|_{L^2(\R^3)}^2+ \|\sqrt{\rho}{u}_t\|_{L^2(\R^3)}^2&=-\langle \rho u\cdot\nabla u,u_t\rangle.
\end{align*}
Hence, by H\"{o}lder's inequality
\begin{align*}
 \frac{1}{2}\frac{d}{dt}\|\nabla u\|_{L^2(\R^3)}^2+ \|\sqrt{\rho}{u}_t\|_{L^2(\R^3)}^2
\leq& \|\sqrt{\rho}u\cdot\nabla u\|_{L^2(\R^3)}  \|\sqrt{\rho} u_t\|_{L^2(\R^3)}\\
\leq& C \rho_*\|u\cdot\nabla u\|_{L^2(\R^3)}^2+   \frac{1}{4}\|\sqrt{\rho} u_t\|_{L^2(\R^3)}^2
\end{align*}
Putting the above inequality together with \eqref{P1-1} implies
\begin{align}\label{P1-2}
\frac{d}{dt}\|\nabla u\|_{L^2(\R^3)}^2+ \frac{1}{\rho_*}\|(\sqrt{\rho} {u}_t, \Delta u, \nabla P)\|_{L^2(\R^3)}^2
 \leq &  C \rho_*\|u\|_{L^6(\R^3)}^2 \|\nabla u\|_{L^3(\R^3)}^2\\
 \leq &  C \rho_*\|\nabla u\|_{L^2(\R^3)}^3 \|\Delta u\|_{L^2(\R^3)}\notag\\
 \leq & C   \rho_*^3\|\nabla u\|_{L^2(\R^3)}^6+\frac1{2\rho_*}\|\Delta u\|_{L^2(\R^3)}^2,\notag
\end{align}
in which  inequality \eqref{sobolevem} and the Gagliardo-Nirenberg inequality \eqref{GN-in}  were used.
 Therefore,  we see that whenever $T$ satisfies $C\rho_*^3\|\nabla u_0\|_{L^2(\R^3)}^2\int_0^T \|\nabla u\|_{L^2(\R^3)}^2\,ds<1,$ we have
 \begin{align*}
 \|\nabla u(t, \cdot)\|_{L^2(\R^3)}^2\leq \frac{\|\nabla u_0\|_{L^2(\R^3)}^2}{1-C\rho_*^3\|\nabla u_0\|_{L^2(\R^3)}^2\int_0^T \|\nabla u(s, \cdot)\|_{L^2(\R^3)}^2\,ds}\quad{\rm for~all}~t\in[0, T].
 \end{align*}
 Now, the energy inequality \eqref{enineq}  implies that
 \begin{align*}
  \|\nabla u_0\|_{L^2(\R^3)}^2\int_0^t \|\nabla u\|_{L^2(\R^3)}^2\,ds \leq \|\nabla u_0\|_{L^2(\R^3)}^2\|\sqrt{\rho_0}u_0\|_{L^2(\R^3)}^2\leq \frac{1}{2C\rho_*^3}\quad{\rm for~any}~t\geq0
 \end{align*}
 provided that $(\sqrt{\rho_*})^3\|\nabla u_0\|_{L^2(\R^3)}\|\sqrt{\rho_0}u_0\|_{L^2(\R^3)}\leq c_0$ for small enough $c_0.$ Using inequality \eqref{sobolevem} again,  we get \eqref{es-2-3d}.

  In the case when the smallness condition is not satisfied, we get from \eqref{P1-2} that
  \begin{align*}
  \|\nabla u(t, \cdot)\|_{L^2(\R^3)}^4\leq \frac{\|\nabla u_0\|_{L^2(\R^3)}^4}{1-2C\rho_*^3t\|\nabla u_0\|_{L^2(\R^3)}^4},
  \end{align*}
 which eventually enables us to finish the proof of the second statement.
\end{proof}

\begin{rmk}[boundedness in space of the velocity]\label{rem.bddness}
Notice that thanks to inequality \eqref{es-2-3d} we also have
\begin{align*}
 \int_0^T\|u(t, \cdot)\|_{ L^\infty(\R^3)}^4\,dt&\leq \int_0^T\|\nabla  u\|_{L^2(\R^3)}^2\|\nabla^2 u\|_{L^2(\R^3)}^2\,dt\\
&\leq \|\nabla  u_0\|_{L^2(\R^3)}^4\exp(C\|\sqrt{\rho_0}u_0\|_{L^2(\R^3)}^2\|\nabla u_0\|_{L^2(\R^3)}^2).
\end{align*}
\end{rmk}

\subsubsection{Estimates of the time derivative}
Here, we want to   bound time derivative $\sqrt{\rho t}  {u}_t$ in the space  $L^\infty(0, T; L^2(\R^3))$ and $\sqrt{t}\nabla {u}_t$ in the space $L^2((0, T)\times \R^3),$
since it is an important step towards the proof of existence and higher-order spatial estimates for the velocity.

\begin{prop}[time derivative estimates]\label{Prop-timees3d}
   Let $(\rho, u)$ be a smooth enough solution to system \eqref{INS} on $[0, T_*)\times \R^3.$ Then for all $T\in[0, T_*)$ it holds
\begin{equation}
\sup_{t\in[0, T]}\|\sqrt{\rho t} {u}_t \|_{  L^2(\R^3)}^2+ \int_0^T\|  \sqrt{t} {u}_t\|_{\mathcal{D}^{1, 2}(\R^3)}^2\,ds \leq C_{0,  T},
\end{equation}
where $C_{0, T}$ is a constant depending only on $T, \rho_*$ and norms $ \|\sqrt{\rho_0}u_0\|_{L^2(\R^3)}, \|\nabla u_0\|_{L^2(\R^3)}.$
\end{prop}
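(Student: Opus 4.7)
The plan is to derive a differential inequality for $\int_{\R^3}\rho|u_t|^2\,dx$ and then multiply by the time weight $t$ so that the initial value contribution automatically vanishes. This is essential because no compatibility condition on $u_0$ is assumed, and hence $\sqrt{\rho_0}\partial_tu|_{t=0}$ is not known to lie in $L^2$.

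I first rewrite the momentum equation as $\rho u_t+\rho u\cdot\nabla u+\nabla P=\Delta u$, differentiate in $t$, and test against $u_t$. The pressure term drops since $\nabla\cdot u_t=0$. Combining $\partial_t(\rho u_t)\cdot u_t$ with the continuity equation $\rho_t+\nabla\cdot(\rho u)=0$ yields the standard cancellation $\tfrac12\int\rho_t|u_t|^2+\int\rho u\cdot\nabla u_t\cdot u_t=0$, so that
\begin{equation*}
\tfrac12\frac{d}{dt}\int_{\R^3}\rho|u_t|^2\,dx+\int_{\R^3}|\nabla u_t|^2\,dx=-\int_{\R^3}\rho_t\,u\cdot\nabla u\cdot u_t\,dx-\int_{\R^3}\rho\,u_t\cdot\nabla u\cdot u_t\,dx.
\end{equation*}

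For the second right-hand side term, $\rho\le\rho_*$, H\"older and the Sobolev embedding $\dot H^1(\R^3)\hookrightarrow L^6(\R^3)$ applied to $u_t$ (legal because the approximate solutions have $\rho^\eps\ge\eps$ and $u_t^\eps$ decays at infinity) give
\begin{equation*}
\Bigl|\int_{\R^3}\rho\,u_t\cdot\nabla u\cdot u_t\,dx\Bigr|\le\sqrt{\rho_*}\,\|\sqrt\rho u_t\|_{L^2}\|\nabla u\|_{L^3}\|u_t\|_{L^6}\lesssim\sqrt{\rho_*}\,\|\sqrt\rho u_t\|_{L^2}\|\nabla u\|_{L^2}^{1/2}\|\Delta u\|_{L^2}^{1/2}\|\nabla u_t\|_{L^2},
\end{equation*}
after the Gagliardo--Nirenberg interpolation $\|\nabla u\|_{L^3}\lesssim\|\nabla u\|_{L^2}^{1/2}\|\nabla^2 u\|_{L^2}^{1/2}$. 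For the first term, I use $\rho_t=-\nabla\cdot(\rho u)$ and integrate by parts, producing three trilinear contributions of the types $\rho u_iu_j\partial_iu_t^k\partial_ju^k$, $\rho u_iu_ju_t^k\partial_i\partial_ju^k$ and $\rho u_iu_j\partial_iu^k\partial_ju_t^k$. Using again $\rho\le\rho_*$, Sobolev ($u,u_t\in L^6$), and Proposition \ref{prop-H13d}'s control on $\|\nabla u\|_{L^\infty_tL^2_x}$ and $\|\Delta u\|_{L^2_{t,x}}$, each is dominated by $\tfrac18\|\nabla u_t\|_{L^2}^2$ plus a contribution of the form $C(t)\|\sqrt\rho u_t\|_{L^2}^2+R(t)$ with $C,R\in L^1(0,T)$.

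Multiplying the resulting inequality by $t$ turns the left-hand side into $\tfrac{d}{dt}\bigl(t\|\sqrt\rho u_t\|_{L^2}^2\bigr)+t\|\nabla u_t\|_{L^2}^2$, the initial value of which is zero; the right-hand side becomes $\|\sqrt\rho u_t\|_{L^2}^2+tC(t)\|\sqrt\rho u_t\|_{L^2}^2+tR(t)$. Since Proposition \ref{prop-H13d} yields $\sqrt\rho u_t\in L^2((0,T)\times\R^3)$, Gr\"onwall's inequality closes the estimate and produces both the $L^\infty_tL^2_x$ bound on $\sqrt{\rho t}\,u_t$ and the $L^2_{t,x}$ bound on $\sqrt t\,\nabla u_t$, with constants depending only on $T$, $\rho_*$, $\|\sqrt{\rho_0}u_0\|_{L^2}$ and $\|\nabla u_0\|_{L^2}$, hence independent of $\eps$.

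The main obstacle is the term containing $\rho_t\,u\cdot\nabla u\cdot u_t$: after integration by parts a second-order derivative $\nabla^2 u$ becomes paired with $u_t$, and this must be estimated \emph{without} invoking any lower bound on $\rho$. The trick is to keep $\sqrt\rho u_t$ on one side and exploit the Sobolev $L^6$-bound $\|\nabla u_t\|_{L^2}$ on the other, absorbing the $\nabla u_t$ factor by Young's inequality while encoding the rest through the uniform bounds of Proposition \ref{prop-H13d}, so that every constant remains bounded as $\eps\to 0$.
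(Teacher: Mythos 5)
Your overall route is the same as the paper's: differentiate the momentum equation in time, test against $u_t$, put in the time weight (the paper multiplies the differentiated equation by $\sqrt t$ before testing, you multiply the resulting inequality by $t$ afterwards, which is equivalent), and close by Gr\"onwall using Proposition \ref{prop-H13d} and Remark \ref{rem.bddness}; the two nonlinear terms you keep are exactly the paper's $A_3$ and $A_4$, and the extra $\tfrac12\|\sqrt\rho u_t\|_{L^2}^2$ produced by the weighting is the paper's $A_1$.

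However, your starting energy identity is incorrect: the claimed cancellation $\tfrac12\int\rho_t|u_t|^2+\int\rho u\cdot\nabla u_t\cdot u_t=0$ does not hold. By the continuity equation, $\int\rho u\cdot\nabla u_t\cdot u_t=\tfrac12\int\rho u\cdot\nabla|u_t|^2=+\tfrac12\int\rho_t|u_t|^2$, so that sum equals $\int\rho_t|u_t|^2$, not zero. The genuine cancellation is $\int\rho(\partial_t u_t+u\cdot\nabla u_t)\cdot u_t=\tfrac12\tfrac{d}{dt}\int\rho|u_t|^2$, and since $\partial_t(\rho u_t)=\rho_t u_t+\rho u_{tt}$ the correct balance is
\begin{equation*}
\frac12\frac{d}{dt}\int_{\R^3}\rho|u_t|^2\,dx+\int_{\R^3}|\nabla u_t|^2\,dx=-\int_{\R^3}\rho_t|u_t|^2\,dx-\int_{\R^3}\rho_t\,(u\cdot\nabla u)\cdot u_t\,dx-\int_{\R^3}\rho\,(u_t\cdot\nabla u)\cdot u_t\,dx,
\end{equation*}
so your right-hand side is missing the term $-\int\rho_t|u_t|^2$, which is precisely the paper's $A_2$ and is not negligible. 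The repair stays within your own toolkit: write $\rho_t=-\nabla\cdot(\rho u)$, integrate by parts to obtain $-\int\rho u\cdot\nabla|u_t|^2$, bound it (after time-weighting) by $2\sqrt{\rho_*}\,\|u\|_{L^\infty(\R^3)}\|\sqrt{\rho t}\,u_t\|_{L^2(\R^3)}\|\nabla(\sqrt t\,u_t)\|_{L^2(\R^3)}$, absorb the gradient factor by Young, and place $C\rho_*\|u\|_{L^\infty(\R^3)}^2$, which lies in $L^1(0,T)$ by Remark \ref{rem.bddness}, into the Gr\"onwall coefficient. With this term restored, your argument goes through and coincides with the paper's proof.
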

\begin{proof}
 At first,  applying the time derivative $\partial_t $  to the momentum equation in system \eqref{INS}  and multiplying  the resulting equation   by $\sqrt{t}$ yields
\begin{multline}\label{eq-td}
\rho\big(\partial_t  (\sqrt{t} {u}_t)+u\cdot\nabla (\sqrt{t} {u}_t)\big)-\Delta   (\sqrt{t}{u}_t)+\nabla  (\sqrt{t}{P}_t)\\
=  \frac{1}{2\sqrt{t}}\rho u_t -\sqrt{t}\rho_t u_t-\sqrt{t}\rho_t u\cdot\nabla u-\sqrt{t}\rho  u_t\cdot\nabla u.
\end{multline}
Taking the $L^2$ scalar product with $\sqrt{t} {u}_t$,  we get
\begin{align}\label{time-es1}
\frac{1}{2}\frac{d}{dt}\|\sqrt{\rho t} {u}_t\|_{L^2}^2+\|\nabla(\sqrt{t} {u}_t)\|_{L^2}^2=A_1+\dots+A_4
\end{align}
where
\begin{align*}
&A_1:=\frac{1}{2}\|\sqrt{\rho }{u}_t\|_{L^2(\R^3)}^2,\\
&A_2:=-\langle  t\rho_t , |u_t|^2\rangle,\\
&A_3:=- \langle\sqrt{t}\rho_t u\cdot\nabla u, \sqrt{t}{u}_t\rangle,\\
&A_4:=-\langle\sqrt t\rho   u_t\cdot\nabla u, \sqrt{t} {u}_t\rangle.
\end{align*}
In order to estimate  $ A_1, A_2, A_3, A_4,$  we proceed as follows.
Noticing that
\begin{align*}
A_1&= \frac{1}{2}\|\sqrt{\rho }(\dot{u}-u\cdot\nabla u) \|_{L^2(\R^3)}^2\\
&\leq \|\sqrt{\rho }\dot{u}  \|_{L^2(\R^d)}^2+ \rho_*\|u\|_{L^\infty(\R^3)}^2\|\nabla u \|_{L^2(\R^3)}^2.
\end{align*}
 Using the equation $\dot\rho=\partial_t \rho +u\cdot\nabla\rho=\partial_t \rho +\nabla\cdot(\rho u)=0,$ we write
\begin{align*}
A_2=-\langle  t \rho u , \nabla(|u_t|^2)\rangle
\end{align*}
and
\begin{align*}
|A_2|\leq& 2  \sqrt{\rho_*}\|u\|_{L^\infty(\R^3)}\|\sqrt{\rho t} u_t\|_{L^2(\R^3)}\|\nabla(\sqrt{t} u_t)\|_{L^2(\R^3)}\\
\leq& \frac{1}{8} \|\nabla(\sqrt{t} u_t)\|_{L^2(\R^3)}^2+ C\rho_* \|\sqrt{\rho t} u_t\|_{L^2(\R^3)}^2\|u\|_{L^\infty(\R^3)}^2.
\end{align*}
Similarly, we write
\begin{align*}
A_3=-\langle  t \rho u  , \nabla[(u\cdot\nabla u)\cdot  u_t]\rangle.
\end{align*}
and decompose
\begin{align*}
|A_3|&\leq \langle  |t \rho u|  , |\nabla u|^2\, |u_t|\rangle +\langle | t \rho u | ,  |u|\,|\nabla^2 u|\,|u_t|\rangle+ \langle  |t \rho u|  , |u|\, |\nabla u|\, |\nabla u_t|\rangle\\
&=: A_{31}+A_{32}+A_{33}.
\end{align*}
Thanks to   inequality \eqref{sobolevem} and Young's inequality
 \begin{align*}
A_{31}&\leq \rho_*\sqrt{ T}\| \sqrt{  t} u_t\|_{L^6(\R^3)}\| u\|_{L^6(\R^3)}\|\nabla u\|_{L^3(\R^3)}^2\\
&\leq C\rho_*\sqrt{ T}\|\nabla( \sqrt{  t} u_t)\|_{L^2(\R^3)}\|\nabla u\|_{L^2(\R^3)}^2\|\nabla^2 u\|_{L^2(\R^3)}\\
&\leq \frac{1}{8}\|\nabla( \sqrt{  t} u_t)\|_{L^2(\R^3)}^2+  C\rho_*^2 { T}\|\nabla u\|_{L^2(\R^3)}^4\|\nabla^2 u\|_{L^2(\R^3)}^2.\\
\end{align*}
As for $A_{32}, A_{33}$, we have
\begin{align*}
A_{32}&\leq \sqrt{ \rho_*T}\|\sqrt{\rho t}u_t\|_{L^2(\R^3)}\|u\|_{L^\infty(\R^3)}^2\|\nabla^2 u\|_{L^2(\R^3)}\\
&\leq\|\sqrt{\rho t}u_t\|_{L^2(\R^3)}^2\|u\|_{L^\infty(\R^3)}^4+ \rho_*T\|\nabla^2 u\|_{L^2(\R^3)}^2
\end{align*}
and
\begin{align*}
A_{33}&\leq  \rho_*\sqrt{ T} \|u\|_{L^\infty(\R^3)}^2\|\nabla u\|_{L^2(\R^3)}\|\nabla (\sqrt{t}u_t)\|_{L^2(\R^3)}\\
&\leq  \frac{1}{8}\|\nabla( \sqrt{  t} u_t)\|_{L^2(\R^3)}^2 + C\rho_*^2 { T} \|u\|_{L^\infty(\R^3)}^4\|\nabla u\|_{L^2(\R^3)}^2.
\end{align*}
To handle $A_4,$ we  use  inequality \eqref{sobolevem} and the Gagliardo-Nirenberg inequality \eqref{GN-in}  to get
\begin{align*}
|A_4|\leq& \sqrt{\rho_*}\|\sqrt{\rho t} u_t\|_{L^2(\R^3)}\|\nabla u\|_{L^3(\R^3)}\| \sqrt t u_t\|_{L^6(\R^3)}\\
\leq & \sqrt{\rho_*}\|\sqrt{\rho t} u_t\|_{L^2(\R^3)}\|\nabla u\|_{L^2(\R^3)}^\frac{1}{2}\|\nabla^2 u\|_{L^2(\R^3)}^\frac{1}{2}\|\nabla(\sqrt{t} u_t)\|_{L^2(\R^3)}\\
\leq & \frac{1}{8}\|\nabla(\sqrt{t} u_t)\|_{L^2(\R^3)}^2+ C {\rho_*}\|\sqrt{\rho t} u_t\|_{L^2(\R^3)}^2\|\nabla u\|_{L^2(\R^3)} \|\nabla^2 u\|_{L^2(\R^3)}.
\end{align*}
By putting all the above estimates into \eqref{time-es1}, we find that
 \begin{multline}\label{time-es3d}
  \frac{d}{dt}\|\sqrt{\rho t} {u}_t\|_{L^2(\R^3)}^2+\|\nabla(\sqrt{t} {u}_t)\|_{L^2(\R^3)}^2\leq \|\sqrt{\rho t}u_t\|_{L^2(\R^3)}^2\left( 1
 +\|u\|_{L^\infty(\R^3)}^4+\|\nabla u\|_{L^2(\R^3)} \|\nabla^2 u\|_{L^2(\R^3)}\right)\\
 +  C_{T}\left(   \|\sqrt{\rho }\dot{u}  \|_{L^2 (\R^3)}^2+  (\|u\|_{L^\infty(\R^d)}^4+1)\|\nabla u \|_{L^2(\R^3) }^2+( \|\nabla u\|_{L^2 (\R^3)}^4+1)\|\nabla^2 u\|_{L^2(\R^3) }^2\right).
\end{multline}
Using \eqref{es-2-3d} and Remark \ref{rem.bddness} again,  it is not difficult to conclude that  inequality  \eqref{time-es3d} can be rewritten as
\begin{align*}
 \frac{d}{dt}\|\sqrt{\rho t} {u}_t\|_{L^2(\R^3)}^2+\|\nabla(\sqrt{t} {u}_t)\|_{L^2(\R^3)}^2\leq B_{3d}(t)\|\sqrt{\rho t}u_t\|_{L^2(\R^3)}^2 +B_{3d}(t)
\end{align*}
for some function $B_{3d}\in L^1(0, T),$   the norm of which  bounded only in terms of  time $T,$ $\rho_*$ and norms $\|\sqrt{\rho_0} u_0\|_{L^2(\R^3)}, \|\nabla  u_0\|_{L^2(\R^3)}.$

Finally, Gronwall's lemma implies that for all $t\in[0, T]$
\begin{align}
\|\sqrt{\rho t} {u}_t\|_{ L^2(\R^3)}^2+\int_0^T \|\nabla(\sqrt{t} {u}_t)\|_{L^2(\R^3)}^2\,ds\leq \int_0^T B_{3d}(t)\,dt \exp\Big(\int_0^T B_{3d}(t)\,dt\Big ).
\end{align}
This completes the proof of proposition \ref{Prop-timees3d}.
\end{proof}

\subsubsection{Shift of regularity}

 As a consequence of Proposition \ref{Prop-timees3d},
we will get higher-order estimates for the velocity, via considering the following multi-dimensional stationary  Stokes problem
\begin{equation}\label{stokes-timeweighted}
\left\{\begin{aligned}
&-\Delta (\sqrt{t}u)+\nabla (\sqrt{t}P)=-\rho \sqrt{t}(u_t+u\cdot\nabla u)\hspace*{1.17cm} {\rm in}~ (0, T)\times \R^d,\\
&\nabla\cdot (\sqrt{t}u)=0 \hspace*{6.19cm}{\rm in}~ (0, T)\times \R^d,\\
&\sqrt{t}u\to 0, \quad |x|\to\infty.
\end{aligned}\right.
\end{equation}
We have
\begin{prop}[higher-order estimates]\label{Prop-shift3d}
   Let $(\rho, u)$ be a smooth enough  solution to system \eqref{INS} on $[0, T_*)\times \R^3$. Then for all $T\in[0, T_*)$ it holds
  that for all $r\in[2, 6]$ and $q\in [2, \frac{4r}{3r-6}]$
 \begin{align*}
\|\nabla^2(\sqrt{t} u)\|_{L^{q}(0, T; L^r(\R^3))}+\|\nabla(\sqrt{t}P)\|_{L^{q}(0, T; L^r(\R^3))}\leq C_{0, T}.
\end{align*}
Moreover, we have the key Lipschitz estimate
\begin{align*}
\int_0^T \|\nabla u(t, \cdot)\|_{L^\infty(\R^3)}\,dt\leq C_{0, T}\, T^{\frac{1}{4}}
 \end{align*}
 and $C_{0, T}\, T^{\frac{1}{4}} \to 0$ as $T\to 0.$
\end{prop}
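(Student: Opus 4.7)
The plan is to apply maximal $L^r$ regularity for the stationary Stokes system \eqref{stokes-timeweighted} in order to transfer the integrability of the right-hand side $F := -\rho\sqrt{t}\,(u_t + u\cdot\nabla u)$ onto the second derivatives of $\sqrt{t}\,u$ and the first derivatives of $\sqrt{t}\,P$. The standard estimate gives, pointwise in time,
\begin{equation*}
\|\nabla^2(\sqrt{t}\,u)(t,\cdot)\|_{L^r(\R^3)} + \|\nabla(\sqrt{t}\,P)(t,\cdot)\|_{L^r(\R^3)} \leq C\|F(t,\cdot)\|_{L^r(\R^3)},\quad r\in(1,\infty),
\end{equation*}
so everything reduces to bounding $F$ in $L^q(0,T;L^r(\R^3))$ for the claimed range of exponents.

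For the time-derivative piece $\rho\sqrt{t}\,u_t$, Proposition \ref{Prop-timees3d} together with $\rho\leq\rho_*$ yields $\rho\sqrt{t}\,u_t = \sqrt{\rho}\cdot\sqrt{\rho t}\,u_t \in L^\infty(0,T;L^2(\R^3))$, while $\nabla(\sqrt{t}\,u_t)\in L^2(0,T;L^2(\R^3))$ combined with the Sobolev embedding \eqref{sobolevem} gives $\rho\sqrt{t}\,u_t \in L^2(0,T;L^6(\R^3))$. Standard interpolation between these two endpoints places $\rho\sqrt{t}\,u_t$ in $L^{q^\star}(0,T;L^r(\R^3))$ with $q^\star := 4r/(3r-6)$ for $r\in(2,6]$, and the remaining range $q\in[2,q^\star]$ then follows by Hölder's inequality on the finite time interval $(0,T)$. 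For the convective piece $\rho\sqrt{t}\,u\cdot\nabla u$, Proposition \ref{prop-H13d} and the Sobolev embedding supply $u\in L^\infty(0,T;L^6(\R^3))$ and $\nabla u\in L^\infty(0,T;L^2(\R^3))\cap L^2(0,T;L^6(\R^3))$, so Hölder in space followed by time interpolation produces mixed-norm bounds in the same admissible range (with an additional harmless $\sqrt{T}$ prefactor).

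For the key Lipschitz estimate, I would apply Gagliardo-Nirenberg in $\R^3$, which for the choice $r_0=4$ gives
\begin{equation*}
\|\nabla u\|_{L^\infty(\R^3)} \leq C\|\nabla u\|_{L^2(\R^3)}^{1/7}\|\nabla^2 u\|_{L^4(\R^3)}^{6/7},
\end{equation*}
together with the identity $\nabla^2 u = \nabla^2(\sqrt{t}\,u)/\sqrt{t}$. Integrating in time and applying Hölder twice — once to absorb $\|\nabla u\|_{L^2}$ via Proposition \ref{prop-H13d}, and once pairing the $L^{8/3}(0,T;L^4(\R^3))$ bound on $\nabla^2(\sqrt{t}\,u)$ against the weight $t^{-3/7}$ — leaves an integral of the form $\int_0^T t^{-12/19}\,dt = \frac{19}{7}T^{7/19}$, and raising to the power $19/28$ yields exactly the factor $T^{1/4}$ multiplying norms already controlled by the previous part of the proposition. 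The main technical issue is the exponent bookkeeping to guarantee that the time weight $t^{-s}$ produced is locally integrable (i.e. $s<1$) and that all interpolation pairs lie inside the admissible range of the Stokes maximal regularity; the choice $r_0=4$ is the simplest one meeting both constraints and simultaneously producing the exponent $1/4$ in the final bound, so that $C_{0,T}T^{1/4}\to 0$ as $T\to 0^+$.
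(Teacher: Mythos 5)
Your overall architecture is the same as the paper's: maximal regularity for the stationary Stokes system \eqref{stokes-timeweighted} and transfer of mixed-norm bounds on $\sqrt{\rho t}\,\dot u$. Your treatment of the time-derivative piece (interpolating $\sqrt{\rho t}\,u_t\in L^\infty_tL^2_x$ from Proposition \ref{Prop-timees3d} with $L^2_tL^6_x$ obtained from $\nabla(\sqrt t\,u_t)\in L^2_{t,x}$ and \eqref{sobolevem}) is exactly the paper's argument, and your Lipschitz step with $r_0=4$, i.e. $\|\nabla u\|_{L^\infty}\lesssim\|\nabla u\|_{L^2}^{1/7}\|\nabla^2(\sqrt t\,u)\|_{L^4}^{6/7}\,t^{-3/7}$ followed by H\"older against $t^{-3/7}\in L^{28/19}(0,T)$, is a correct variant of the paper's choice (the paper uses $r=6$ and $t^{-3/8}\in L^{8/5}(0,T)$); both yield the factor $T^{1/4}$, and your exponent bookkeeping checks out.

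The genuine gap is the convective term. You claim that $u\in L^\infty_tL^6$, $\nabla u\in L^\infty_tL^2\cap L^2_tL^6$ (plus $u\in L^4_tL^\infty$ as in Remark \ref{rem.bddness}) give $\rho\sqrt t\,u\cdot\nabla u$ in the whole admissible range, the weight $\sqrt t\le\sqrt T$ being ``harmless''. This fails at the upper part of the range: from those unweighted bounds, H\"older and interpolation only give $u\cdot\nabla u\in L^q_tL^r_x$ with, roughly, $q=\tfrac{4r}{4r-6}$. In particular for $(r,q)=(6,2)$ --- which the closed range $q\in[2,\tfrac{4r}{3r-6}]$ forces at $r=6$ --- one only reaches $L^{4/3}_tL^6_x$, and for $(r,q)=(4,8/3)$, which is precisely the bound your own Lipschitz argument consumes, one only reaches $L^{8/5}_tL^4_x$. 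The time weight must be exploited in an essential way: as in the paper, first take $p=2$ in the Stokes estimate and combine with Proposition \ref{Prop-timees3d} to obtain $\|\nabla^2(\sqrt t\,u)\|_{L^\infty(0,T;L^2(\R^3))}\le C_{0,T}$ (estimate \eqref{es-shift-3d00}); by Sobolev embedding and Gagliardo-Nirenberg this upgrades $\sqrt t\,\nabla u$ to $L^\infty_tL^6_x$ and $\sqrt t\,u$ to $L^\infty_{t,x}$, and only with these weighted bounds can $\sqrt{\rho t}\,u\cdot\nabla u$ be placed in $L^\infty_tL^2_x\cap L^2_tL^6_x$ and then interpolated over the claimed range. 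Without this intermediate step your first display, and hence also the Lipschitz estimate, does not follow.
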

\begin{proof}
 Applying the standard $L^p$ estimates for  system \eqref{stokes-timeweighted} yields that for all finite $p,$
\begin{align}\label{shift-es13d}
\|\nabla^2(\sqrt{t} u)\|_{ L^p(\R^3)}+\|\nabla(\sqrt{t}P)\|_{L^p(\R^3)}\leq C(p)\sqrt{\rho_*}\| \sqrt{\rho t} \dot{u}\|_{L^p(\R^3)}.
\end{align}

Now, we  estimate the material derivative in the right-hand side of above inequality.   On the one hand, we  have   by  inequality \eqref{sobolevem}  and Proposition \ref{Prop-timees3d}
 \begin{align*}
\int_0^T \| \sqrt{\rho t}u_t\|_{L^6(\R^3)}^2\,dt
& \leq   \rho_*\int_0^T \| \nabla(\sqrt{  t }u_t)\|_{L^2(\R^3)}^2\,dt
\leq C_{0, T}.
\end{align*}
Interpolating this with the estimate in  Proposition \ref{Prop-timees3d} gives that  for all  $r\in[2, 6]$ and  $q\in[2,  \frac{4r}{3r-6}]$
\begin{align}\label{es-shift-es113}
\| \sqrt{\rho t}u_t\|_{L^q(0, T; L^r(\R^3)}\leq C_{0, T}.
\end{align}
In the other hand, taking $p=2$ in inequality \eqref{shift-es13d}  and using Young's inequality  yield that
\begin{align}
& \|\nabla^2(\sqrt{t} u)\|_{L^\infty(0, T; L^2(\R^3))} \label{es-shift-3d00}\\
\leq& C(\sqrt{\rho_*}\|\sqrt{\rho t}u_t\|_{L^\infty(0, T; L^2(\R^3) )}+\rho_*\|\sqrt{t}u\cdot\nabla u\|_{L^\infty(0, T; L^2(\R^3) )})\notag\\
\leq& C(\sqrt{\rho_*} \|\sqrt{\rho t}u_t\|_{L^\infty(0, T; L^2 (\R^3))}+\rho_* \|\sqrt{t}u\|_{L^\infty(0, T; L^\infty(\R^3) )}\|\nabla u\|_{L^\infty(0, T; L^2(\R^3) )})\notag \\
\leq& C(\rho_*)\left( \|\sqrt{\rho t}u_t\|_{L^\infty(0, T; L^2(\R^3) )}+  \|(\sqrt{t}\nabla u, \sqrt{t}\nabla^2 u)\|_{L^\infty(0, T; L^2(\R^3) )}^\frac{1}{2}\|\nabla u\|_{L^\infty(0, T; L^2(\R^3) )}\right) \notag\\
\leq&  C_{0, T}. \notag
\end{align}
Then by the Gagliardo-Nirenberg inequality \eqref{GN-in}, Proposition \ref{prop-H13d} and   estimate \eqref{es-shift-3d00}
 \begin{align*}
 \|\sqrt{\rho t}u\cdot\nabla u\|_{L^2(0, T; L^6(\R^3))}
\leq&\sqrt{ \rho_* }  \left(\int_0^T \|  u\|_{L^{\infty}(\R^3)}^2  \|\sqrt{t}\nabla u \|_{L^{6}(\R^3)}^2\,dt)\right)^{{1}/{2}}\\
\leq& \sqrt{ \rho_*}  \left(\int_0^T \|\nabla u\|_{L^{2}(\R^3)}\|\nabla^2 u \|_{L^{2}(\R^3)}  \|\sqrt{t}\nabla u \|_{L^{6}(\R^3)}^2\,dt\right)^{1/2}\\
 \leq&  \sqrt{ \rho_*} \left( \int_0^T\|\nabla u\|_{L^{2}(\R^3)}  \big(\|\nabla^2 u \|_{L^{2}(\R^3)}^2+ \|\sqrt{t}\nabla u\|_{L^6(\R^3)}^4\big)\,dt\right)^{1/2}\\
 \leq& C_{0, T}.
\end{align*}
Notice that the Gagliardo-Nirenberg inequality \cite[page 54]{Galdibook} and  estimate \eqref{es-shift-3d00}  give that
\begin{align*}
\| \sqrt{\rho t}u\cdot\nabla u\|_{L^\infty(0, T; L^{2}(\R^3))}\leq &
\| \sqrt{\rho t}u\|_{L^\infty(0, T;  L^{\infty} (\R^3))} \|   \nabla u \|_{L^\infty(0, T;  L^{2} (\R^3))} \\
\leq &\sqrt{\rho_*  } \|\sqrt{t} u\|_{L^\infty(0, T; L^2 (\R^3))}^\frac{1}{2}\|\sqrt{t}\nabla^2 u\|_{L^\infty(0, T; L^2(\R^3) )}^\frac{1}{2}\|\nabla u\|_{L^\infty(0, T;  L^{2} (\R^3))}\\
\leq &  C_{0, T}.
\end{align*}
Hence, by interpolation,  for all $ r\in [2, 6]$ and   $q\in [2, \frac{4r}{3r-6}]$
\begin{align*}
\| \sqrt{\rho t}u\cdot\nabla u \|_{L^{q}(0, T;  L^{  r} (\R^2))}\leq    C_{0, T},
\end{align*}
 which together with \eqref{es-shift-es113}  and \eqref{shift-es13d} imply the desired estimate.

Finally,  we have by inequality \eqref{GN-in} again
 \begin{align*}
\int_0^T \|\nabla u\|_{L^\infty(\R^3)}\,dt\leq& \int_0^T\|\nabla u\|_{L^2(\R^3)}^\frac{1}{4} \|\nabla^2 (\sqrt{t} u)\|_{L^6(\R^3)}^\frac{3}{4}\, t^{-\frac{3}{8}}\,dt\\
 \leq &  \|\nabla u\|_{L^\infty(0, T; L^2(\R^3))}^\frac{1}{4}   \|\nabla^2 (\sqrt{t} u)\|_{L^{2}(0, T; L^6(\R^3))}^\frac{3}{4}  \|t^{-\frac{3}{8}}\|_{L^{\frac{8}{5}}(0, T)}\\
 \leq & C_{0, T}\, T^{\frac{1}{4}}.
 \end{align*}
It remains  to check   in the proof of Proposition \ref{Prop-timees3d}   that $C_{0, T}$ is uniformly bounded as $T\to 0.$
This completes the proof of Proposition \ref{Prop-shift3d}.
\end{proof}

 \subsection{Proof of existence}
\label{sec.exi3D}
One can now turn to the proof of the existence of a solution stated in Theorem \ref{thm3d}.   Denoting by $T^\eps$ the maximal time of existence for  approximate solutions $(\rho^\eps, u^\eps)$,  then \cite[Theorem 1.4]{Z} shows that $T^\eps\geq T_0$.   On the other hand, in the case the smallness condition \eqref{small3d} is satisfied, one can  continue $(\rho^\eps, u^\eps)$  globally-in-time  thanks to the   $L^1_tLip_x$ estimate in Proposition \ref{Prop-shift3d} and the blow-up criterion in \cite[Theorem 0.4]{Dan2} or \cite[Theorem 4]{CK2}.  It   remains to prove the convergence of approximate solutions.
 At this stage,  with all the estimates established in the previous subsection,  the standard compactness argument  yields that, up to a subsequence,
  $$\rho^\eps\rightharpoonup \rho\quad {\rm weak* ~in}~L^\infty(\R_+\times \R^3)\and 0\leq \rho\leq \rho_*,$$
  $$\nabla u^\eps\rightharpoonup \nabla u\quad {\rm weak ~in}~L^2( \R_+;  L^2(\R^3),$$
 $$u^\eps \rightharpoonup u\quad {\rm weak ~in}~L^2(\R_+;  L^6(\R^3),$$
 for some $(\rho, u)$ satisfies all the regularity results stated in Theorem \ref{thm3d}, except for the regularities of non-linear term $\sqrt{\rho} u_t.$
The compactness result  obtained  in \cite{BD97}  then shows that
  \begin{align*}
 {\rho^\eps}\to  {\rho}\quad{\rm in}~~\mathcal{C}_{\rm loc}(\R_+; L^q_{\rm loc}(\R^3)), \quad\forall  \,q\in[0, \infty)
 \end{align*}
 and
  \begin{align*}
 {\sqrt{\rho^\eps} u^\eps}\to  {\sqrt{\rho} u}\quad{\rm in}~~L^2_{\rm loc}(\R_+\times \R^3).
 \end{align*}
 From which, we know from Proposition \ref{Prop-timees3d} that $\sqrt{\rho^\eps} u^{\eps}_t\rightharpoonup \sqrt{\rho} u_t$ weak  in $\mathcal{D}'((0, T_0)\times \R^3))$ and thus $\sqrt{\rho} u_t\in L^2(0, T_0; L^2(\R^3))$ thanks to the uniform bounds \eqref{es-2-3d}. Similarly, one has  $\sqrt{\rho t} u_t \in L^\infty(0, T_0; L^2(\R^3)).$

Finally, all the  compactness information  above  is enough to justify that the couple $(\rho, u)$ is a weak solution to \eqref{INS} in the sense of distributions. This finishes the proof of the existence part of Theorem \ref{thm3d}.    \qed

\section{Proof of the existence results in the two-dimensional case, Theorem \ref{thm2d}}\label{s:existence2d}

This section is concerned with the  proof of the existence part in Theorem \ref{thm2d} (2D case).  The initial data $(\rho_0, u_0)$ satisfies condition \eqref{initialcond} and \eqref{cond:inidensity}, and $\nabla u_0\in L^2(\R^2), \nabla\cdot u_0=0$. We analyse separately the case when: (i) \eqref{cond1} or \eqref{cond2} is satisfied, see Subsection \ref{subsec.cond12} below, (ii) \eqref{condition3'} is satisfied, see Subsection \ref{s:ffv}. The idea is similar to the three-dimensional case. We first approximate the data or mollify the equation\footnote{We use two different approximation procedures depending on which case (i) or (ii) is considered} and rely on known existence results for smooth approximate solutions without vacuum.
Then, we show the persistence of the regularity $u\in \wt{\mathcal D}^{1,2}(\R^2)$ 
and other higher-order estimates uniformly for the approximate solutions. The actual existence proof is similar to the one in the three-dimensional case, see Subsection \ref{sec.exi3D}.

\medskip

Before going into the details of each case, let us recall an estimate obtained in \cite{LSZ} whose proof  is motivated by the work \cite{LX} of  Li and Xin on the two-dimensional compressible Navier-Stokes equations with vacuum. For the reader's convenience, we provide its proof here and  mention in particular that it is valid for our approximate solutions constructed in the follow-up subsections.

\begin{prop}[gradient estimate; {\cite[Lemma 3.2]{LSZ}}]\label{prop-H1}
There exists a universal constant $C>0$ such that for all $t>0$,
\begin{multline}\label{es-2-2d}
\|\nabla u(t, \cdot)\|_{L^2(\R^2)}^2+2\int_{0}^t\|(\sqrt{\rho}\dot{u}, \Delta u, \nabla P)(s, \cdot)\|_{L^2(\R^2)}^2\,ds\\
\leq\|\nabla u_0\|_{L^2(\R^2)}^2\exp( C\rho_*\|\sqrt{\rho_0}u_0\|_{L^2(\R^2)}^2).
\end{multline}
\end{prop}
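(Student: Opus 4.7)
The plan is to follow the Li--Xin/LSZ strategy for $\dot H^1$-type estimates for the 2D inhomogeneous Navier--Stokes equations with vacuum. A key distinction with the 3D analogue (Proposition~\ref{prop-H13d}) is that we test the momentum equation against the material derivative $\dot u$ rather than $u_t$: this avoids any need for $u\in L^2(\R^2)$, which is not available in the 2D vacuum setting since only $\sqrt{\rho}\,u$ is naturally bounded in $L^2$.

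\emph{Step 1 (material-derivative energy identity).} Multiplying the momentum equation $\rho\dot u-\Delta u+\nabla P=0$ by $\dot u=u_t+u\cdot\nabla u$ and integrating by parts, using $\nabla\cdot u=\nabla\cdot u_t=0$ together with the cancellation $\int_{\R^2} u_j\,\partial_j|\nabla u|^2\,dx=0$, one reaches
\begin{align*}
\frac{1}{2}\frac{d}{dt}\|\nabla u\|_{L^2(\R^2)}^2+\|\sqrt{\rho}\,\dot u\|_{L^2(\R^2)}^2=-\int_{\R^2}\partial_j u_i\,\partial_j u_k\,\partial_k u_i\,dx+\int_{\R^2}P\,\partial_j u_i\,\partial_i u_j\,dx.
\end{align*}
Note that the pressure-time term drops out since $\int\nabla P\cdot u_t=0$ by incompressibility of $u_t$.

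\emph{Step 2 ($L^2$ maximal Stokes regularity).} Viewing the momentum equation as a stationary Stokes system with source $-\rho\dot u$, standard $L^2$-estimates yield $\|\Delta u\|_{L^2(\R^2)}^2+\|\nabla P\|_{L^2(\R^2)}^2\leq C\rho_*\|\sqrt{\rho}\,\dot u\|_{L^2(\R^2)}^2$.

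\emph{Step 3 (closing the differential inequality).} Both right-hand terms of Step~1 are bounded by $C\|\nabla u\|_{L^2(\R^2)}^2\bigl(\|\nabla^2 u\|_{L^2(\R^2)}+\|\nabla P\|_{L^2(\R^2)}\bigr)$ via 2D Gagliardo--Nirenberg interpolation (in particular $\|\nabla u\|_{L^3}^3\leq C\|\nabla u\|_{L^2}^2\|\nabla^2 u\|_{L^2}$ and the Ladyzhenskaya inequality $\|\nabla u\|_{L^4}^2\leq C\|\nabla u\|_{L^2}\|\nabla^2 u\|_{L^2}$), together with Calder\'on--Zygmund bounds on $P$ drawn from its elliptic equation $\Delta P=-\nabla\cdot(\rho\dot u)$. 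Inserting Step~2 and absorbing quadratic terms on the left via Young's inequality produces
\begin{align*}
\frac{d}{dt}\|\nabla u\|_{L^2(\R^2)}^2+\frac{1}{C\rho_*}\,\bigl\|(\sqrt{\rho}\,\dot u,\,\Delta u,\,\nabla P)\bigr\|_{L^2(\R^2)}^2\leq C\rho_*\|\nabla u\|_{L^2(\R^2)}^4.
\end{align*}

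\emph{Step 4 (Gronwall via the basic energy identity).} Testing the momentum equation against $u$ gives the classical energy identity, which integrates to $\int_0^{\infty}\|\nabla u\|_{L^2(\R^2)}^2\,ds\leq\tfrac12\|\sqrt{\rho_0}u_0\|_{L^2(\R^2)}^2$. Gronwall's lemma applied to $y(t):=\|\nabla u(t,\cdot)\|_{L^2(\R^2)}^2$, with this $L^1_t$-bound serving as the exponent of the integrating factor, yields exactly~\eqref{es-2-2d}.

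\emph{Main obstacle.} The delicate step is the 2D pressure bound in Step~3: since $\dot H^1(\R^2)\not\hookrightarrow L^\infty(\R^2)$, one cannot just estimate $P$ in a high Lebesgue space via Sobolev embedding as one would in 3D; instead the estimates must close at the scaling-critical $L^3$ level, exploiting the sharp 2D Gagliardo--Nirenberg interpolation and the Calder\'on--Zygmund structure of the elliptic pressure equation. A second subtlety, already alluded to, is that one must test against $\dot u$ (not $u_t$) to avoid using $\|u\|_{L^2(\R^2)}$, which the vacuum assumption does not control; this also explains why the bound here involves no smallness condition on the data, unlike its 3D counterpart.
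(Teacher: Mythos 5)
Your overall strategy coincides with the paper's, which does not reprove this proposition but invokes \cite[Lemma 3.2]{LSZ}: test the momentum equation with the material derivative $\dot u$ (indeed necessary in 2D, since $u\in L^2$ is unavailable under vacuum), convert $\sqrt{\rho}\,\dot u$ into $(\Delta u,\nabla P)$ via $L^2$ Stokes regularity, and close with Gronwall using the energy bound $\int_0^\infty\|\nabla u\|_{L^2(\R^2)}^2\,ds\leq\tfrac12\|\sqrt{\rho_0}u_0\|_{L^2(\R^2)}^2$. Your Steps 1, 2 and 4 are correct as written (matching the literal factor $2$ in \eqref{es-2-2d} rather than your $1/(C\rho_*)$ is bookkeeping).

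The genuine gap is the pressure term in Step 3. In the whole plane the only available information on $P$ is $\nabla P\in L^2(\R^2)$, i.e. $P\in{\rm BMO}$ modulo constants: $P=(-\Delta)^{-1}\nabla\cdot(\rho\dot u)$ is a Riesz potential of order one of an $L^2$ function, the endpoint Sobolev/HLS embedding fails, and Gagliardo--Nirenberg would require $\|P\|_{L^2}$, which is not controlled either. So no bound on $\|P\|_{L^3(\R^2)}$ (or any Lebesgue norm of $P$) follows from Calder\'on--Zygmund theory applied to $\Delta P=-\nabla\cdot(\rho\dot u)$, and the claimed estimate $|\int P\,\partial_i u_j\partial_j u_i|\leq C\|\nabla u\|_{L^2}^2(\|\nabla^2u\|_{L^2}+\|\nabla P\|_{L^2})$ cannot be obtained by H\"older at the $L^3$ level as you describe; integrating by parts instead produces $\int\nabla P\cdot(u\cdot\nabla u)$, which would need $u\in L^\infty$ or $L^4$ --- exactly what the vacuum setting forbids. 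The correct tool, used in \cite{LSZ} and recorded in the paper as \eqref{ineq-000}, is the div--curl/compensated-compactness structure: $\partial_i u_j\partial_j u_i=\sum_j\partial_j u\cdot\nabla u^j$ with $\nabla\cdot(\partial_j u)=0$ and $\nabla u^j$ a gradient, hence $\|\partial_i u_j\partial_j u_i\|_{\mathcal{H}^1(\R^2)}\leq C\|\nabla u\|_{L^2}^2$, and by $\mathcal{H}^1$--${\rm BMO}$ duality together with $\dot H^1(\R^2)\hookrightarrow{\rm BMO}(\R^2)$ one gets $|\int P\,\partial_i u_j\partial_j u_i|\leq C\|\nabla P\|_{L^2}\|\nabla u\|_{L^2}^2$. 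With this replacement, your absorption via Step 2 and the Gronwall argument of Step 4 go through unchanged.
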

 \begin{proof}
The main idea is to test the momentum equation against $\dot{u}.$ This gives that
 \begin{align}\label{es-2a1}
\frac{1}{2}\frac{d}{dt}\|\nabla u\|_{L^2(\R^2)}^2+\|\sqrt{\rho}\dot{u}\|_{L^2(\R^2)}^2=\langle\Delta u, u\cdot\nabla u\rangle+ \langle P, \div(u\cdot\nabla u)\rangle.
\end{align}
Using that $\nabla\cdot u=0$ we have  $\Delta u=\nabla^{\perp}(\nabla^{\perp}\cdot u)$ with $\nabla^{\perp}: =(-\partial_2, \partial_1),$ and thus
\begin{align*}
\langle\Delta u, u\cdot\nabla u\rangle=\langle \nabla^{\perp}\cdot u, u\cdot\nabla (\nabla^{\perp}\cdot u)\rangle=0.
\end{align*}
Meanwhile, using  $\nabla\cdot u=0$ again,   by  the  well-known duality between   ${\rm BMO}(\R^2)$ and the Hardy space $\mathcal{H}^1(\R^2)$ and the following inequality (see for instance \cite{PLL}), there exists a universal positive constant $C$ such that for all vectors $w, z\in L^2(\R^2; \R^2)$ satisfying $\nabla\cdot w=\nabla^\perp z=0,$
 \begin{equation}\label{ineq-000}
 \|w\cdot z\|_{\mathcal{H}^1(\R^2)}\leq C\|w\|_{L^2(\R^2)}\|z\|_{L^2(\R^2)},
 \end{equation}
 we have
\begin{align*}
|\langle P, \div(u\cdot\nabla u)\rangle|&\leq \sum_{j=1, 2}|\langle P, \partial_j u\cdot\nabla u^j\rangle|\\
&\leq \sum_{j=1, 2}  \|P\|_{{\rm BMO}(\R^2)} \|\partial_j u\cdot\nabla u^j\|_{\mathcal{H}^1(\R^2)}\\
&\leq  \|P\|_{{\rm BMO}(\R^2)} \|\nabla u\|_{L^2(\R^2)}^2\\
&\leq C  \|\nabla P\|_{L^2(\R^2)} \|\nabla u\|_{L^2(\R^2)}^2.
\end{align*}
Notice that in the last inequality we used the embedding $\dot{H}^1(\R^2)\hookrightarrow {\rm BMO}(\R^2)$ with a numerical constant $C$.  

Plugging the above inequalities into \eqref{es-2a1}  and noticing that
\begin{align*}
\|(\Delta u, \nabla P)\|_{L^2(\R^2)}^2\leq \|\rho \dot{u}\|_{L^2(\R^2)}^2\leq \rho_*\|\sqrt{\rho}\dot{u}\|_{L^2(\R^2)}^2,
\end{align*}
we obtain \eqref{es-2-2d} from the energy inequality \eqref{enineq}.
\end{proof}

Notice that this result does not require any of the assumptions \eqref{cond1}, \eqref{cond2} nor \eqref{condition3'}.

 \subsection{The case when $\eqref{cond1}$ or $\eqref{cond2}$ is satisfied}
 \label{subsec.cond12}

\subsubsection{Approximation procedure}

As in the three-dimensional case, we regularize the initial data. We require that the approximate initial density $\rho_0^\eps$ satisfies \eqref{cond1} or \eqref{cond2} whenever $\rho_0$ satisfies \eqref{cond1} or \eqref{cond2}. Therefore, we construct a sequence $\rho_0^\eps$ such that $\rho_0^\eps\geq \rho_0$. To do this we build the approximation on the sequence of balls $B_n$, $n\rightarrow\infty$.

Let $\eps>0$ and $n\in\N$, $n\geq 1$. Let $K_\eps$ denote a non-negative mollifier. We approximate the initial velocity $u_0$ by $u_0^{\eps,n}\in \mathcal{C}^\infty_{0}(B_n)$ in the following way
\begin{equation*}
u_0^{\eps,n}=(u_0\varphi_n)\star K_\eps,
\end{equation*}
where $\varphi_n=\varphi(\cdot/n)$ with $\varphi\in\mathcal C^\infty_{0}(B_1)$ a cut-off function such that $\varphi=1$ on $B_{1/2}$. 
As for the density, we approximate it in the following way on $B_n$
\begin{equation*}
\rho_0^{\eps,n}:=\rho_0\star K_\ep+\|\rho_0\star K_\ep-\rho_0\|_{L^\infty(B_n)}+\frac1ne^{-|x|^2}.
\end{equation*}
Notice that
 \begin{align*}
  \rho_0^{\eps,n}\in \mathcal{C}^\infty(\R^2)~\,\,{\rm with}~\,\,\frac1ne^{-n^2}\leq \rho_0^\eps\leq 2\rho_*\and u_0^{\eps,n}\in \mathcal{C}_0^\infty(\R^2) ~\,\,{\rm with}~\,\,\nabla\cdot u_0^{\eps,n}=0.
  \end{align*}
Moreover, $\|\rho_0\star K_\ep-\rho_0\|_{L^\infty(B_n)}\longrightarrow 0$ when $\eps\rightarrow 0^+$ and $\rho_0^{\eps,n}\geq\rho_0$. Therefore if $\eqref{cond1}$ (resp. \eqref{cond2}) is satisfied for $\rho_0$ it is also satisfied for $\rho_0^{\eps,n}$. 

We consider the solutions $(\rho^{\ep,n},u^{\eps,n})$ to the following mollified version of \eqref{INS}
\begin{equation}\label{approx-INS}
\left\{\begin{aligned}
  &\partial_t \rho^{\eps,n}+\nabla\cdot\big(\rho^{\eps,n} (K_\eps\star u^{\eps,n})\big) =0, \\
& \partial_t (\rho u^{\eps,n}) +\nabla\cdot\big(\rho^{\ep,n} (K_\eps\star u^{\eps,n})\otimes u^{\eps,n}\big)+\nabla P^{\eps,n} =\nu\Delta u^{\eps,n},\\
&\nabla\cdot u^{\eps,n}=0.
 \end{aligned}\right.\tag{INS-$\eps$}
\end{equation}
in the domain $B_n$ with no-slip boundary condition $u^{\eps,n}=0$ on $\partial B_n$ and initial data $(\rho^{\ep,n}_0,u^{\eps,n}_0)$. According to \cite[Theorem 2.6]{PLL}, there exists a smooth global-in-time solution $(\rho^{\eps,n},u^{\eps,n})\in \mathcal C^\infty([0,\infty)\times\overline B_n)$ to \eqref{approx-INS}.

Our objective is now to derive uniform estimates in $\eps$ and $n$ for the approximate solutions $(\rho^{\ep,n},u^{\eps,n})$. For notational simplicity, we drop the superscripts $\eps$ and $n$ below and simply write the solution $(\rho,u)$. Notice that all the estimates below are on $B_n$.\footnote{\label{foot.R2}Notice that as in the previous subsection, the estimates that do not involve second-order space derivatibves or first-order time derivatives of the velocity can be extended to $\R^2$ thanks to the fact that $u^{\eps,n}$ satisfies no-slip boundary conditions on $\partial B_{n}$.}

\subsubsection{Lower-order estimates}
 Here,  we establish some  lower-order  bounds following from condition \eqref{cond1} or \eqref{cond2}  for  $\rho_0.$
 \begin{prop}[$L^\infty_tL^2_x$ bound for the velocity]\label{Prop-L2}
We have
\begin{align}\label{es-lower}
\|u\|_{L^\infty(\R_+; L^2(B_n))}\leq C_*(\|\sqrt{\rho_0}u_0\|_{L^2(\R^2)}+\|\nabla u_0\|_{L^2(\R^2)})
\end{align}
and
\begin{align}\label{e.gronwalest}
\|u\|_{L^4(\R_+; L^\infty(B_n))}\leq C_*\big(\|\sqrt{\rho_0} u_0\|_{L^2(B_n)}+\|\nabla u_0\|_{L^2(B_n)}\big)^2\|\nabla u_0\|_{L^2(\R^2)}^2\exp(\|\sqrt{\rho_0} u_0\|_{L^2(\R^2)}^2),
\end{align}
with  positive constant $C_*$ depending only on the factors in condition \eqref{cond1} or \eqref{cond2}.
\end{prop}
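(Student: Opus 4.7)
The strategy rests on the pointwise-in-time interpolation of Proposition~\ref{Prop-intp}, which under \eqref{cond1} or \eqref{cond2} provides $\|u(t,\cdot)\|_{L^2}\leq C_*(\|\sqrt{\rho}u(t,\cdot)\|_{L^2}+\|\nabla u(t,\cdot)\|_{L^2})$, combined with the estimates of Proposition~\ref{prop-H1} on $\|\nabla u\|_{L^\infty_tL^2_x}$ and $\|\Delta u\|_{L^2_tL^2_x}$. All computations are performed for the approximate solution $(\rho^{\eps,n},u^{\eps,n})$, and whenever needed $u^{\eps,n}$ is extended by zero across $\partial B_n$, which is legitimate thanks to the no-slip boundary condition. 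For \eqref{es-lower}, I would apply Proposition~\ref{Prop-intp} at each fixed $t\geq 0$ and bound its right-hand side using the basic energy identity $\|\sqrt{\rho(t,\cdot)}\,u(t,\cdot)\|_{L^2}\leq \|\sqrt{\rho_0}u_0\|_{L^2}$ that follows from \eqref{enineq}, together with the $L^\infty_tL^2_x$ bound on $\nabla u$ provided by \eqref{es-2-2d}; the resulting exponential factor is absorbed into $C_*$.

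\textbf{Second estimate.} For \eqref{e.gronwalest}, the key input is the two-dimensional Gagliardo--Nirenberg inequality
\begin{equation*}
\|u(t,\cdot)\|_{L^\infty(\R^2)}^2\leq C\,\|u(t,\cdot)\|_{L^2(\R^2)}\,\|\Delta u(t,\cdot)\|_{L^2(\R^2)},
\end{equation*}
applied to $u^{\eps,n}$ extended by zero outside $B_n$. Squaring this pointwise-in-time bound and integrating in $t$ yields
\begin{equation*}
\|u\|_{L^4(\R_+;L^\infty(B_n))}^4\leq C\,\|u\|_{L^\infty_tL^2_x}^2\int_0^\infty\|\Delta u(t,\cdot)\|_{L^2}^2\,dt,
\end{equation*}
in which the first factor is controlled by \eqref{es-lower} and the second by \eqref{es-2-2d}. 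Taking a fourth root and using elementary inequalities of the form $a^{1/2}\leq 1+a^2$ to rewrite products of square roots as squares of sums then produces the exponents appearing in \eqref{e.gronwalest}.

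\textbf{Main obstacle.} The one delicate point is ensuring that the interpolation constant $C_*$ in Proposition~\ref{Prop-intp} remains uniform in $t\geq 0$ and in the approximation parameters $\eps,n$. At the initial time, the construction $\rho_0^{\eps,n}\geq \rho_0$ guarantees that \eqref{cond1} or \eqref{cond2} is inherited from $\rho_0$ with uniformly bounded constants. Persistence in time for the smooth approximate solution follows because $\rho^{\eps,n}$ is transported by the divergence-free mollified velocity $K_\eps\star u^{\eps,n}$: the measure of its level sets, as well as the $L^p$ norm of $(\underline{\rho}-\rho)_+$ (resp.\ the integral $\int\mathbf{1}_{\rho<\delta_0}/\rho\,dx$), are then preserved along the flow, in the spirit of \eqref{meas-density}, so that \eqref{cond1} or \eqref{cond2} propagates with time-independent constants.
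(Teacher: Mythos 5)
Your proposal follows essentially the same route as the paper: propagation of \eqref{cond1}/\eqref{cond2} along the (divergence-free) flow with unchanged constants, Proposition~\ref{Prop-intp} combined with the energy inequality and \eqref{es-2-2d} for \eqref{es-lower}, and the 2D inequality $\|u\|_{L^\infty}^2\leq C\|u\|_{L^2}\|\nabla^2 u\|_{L^2}$ integrated in time for \eqref{e.gronwalest}. One small technical caveat: the zero-extension of $u^{\eps,n}$ across $\partial B_n$ is only $H^1$, not $H^2$, so the Gagliardo--Nirenberg/Agmon step should be applied directly on $B_n$ (the inequality is scale-invariant, hence its constant is uniform in $n$), which is what the paper implicitly does.
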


\begin{proof}
Using the Lagrangian flow maps and the fact that they are measure preserving due to incompressibility, $\rho$ satisfies \eqref{cond1} or \eqref{cond2} along the evolution with the same constants $d$, $\delta_0$ and $\|( {1}/{{\eta}})\,\mathbf{1}_{\eta< \delta_0}\|_{L^1}$ in the case when \eqref{cond1} is satisfied, $p$, $d$, $\bar\eta$ and $\|(\bar\eta-\eta)_+\|_{L^p}$ in the case when \eqref{cond2}. 
Then by Proposition \ref{Prop-intp} and  \eqref{es-2-2d}, we   get
\begin{align*}
\|u\|_{L^\infty(\R_+; L^2(B_n))}&\leq C_*(\|\sqrt{\rho}u\|_{L^2(B_n)}+\|\nabla u\|_{L^2(B_n)}) \\
&\leq  C_*(\|\sqrt{\rho_0}u_0\|_{L^2(\R^2)}+\|\nabla u_0\|_{L^2(\R^2)}).
\end{align*}
Using  \eqref{es-lower} and \eqref{es-2-2d},  we   further get
\begin{align*}
& \int_0^\infty\|u(t, \cdot)\|_{ L^\infty(B_n)}^4\,dt\\
&\leq \int_0^\infty\|  u\|_{L^2(B_n)}^2\|\nabla^2 u\|_{L^2(B_n)}^2\,dt\\
&\leq C_*\big(\|\sqrt{\rho_0} u_0\|_{L^2(\R^2)}+\|\nabla u_0\|_{L^2(\R^2)}\big)^2  \|\nabla^2 u\|_{L^2(\R_+; L^2(R_n))}^2\\
&\leq C_*\big(\|\sqrt{\rho_0} u_0\|_{L^2(\R^2)}+\|\nabla u_0\|_{L^2(\R^2)}\big)^2\|\nabla u_0\|_{L^2(\R^2)}^2\exp(\|\sqrt{\rho_0} u_0\|_{L^2(\R^2)}^2).\qedhere
\end{align*}
\end{proof}

\subsubsection{Estimates of the time derivative}
In this step, we  want to  bound time derivatives $\sqrt{\rho t}  {u}_t$ in $L^\infty_{\rm loc}(\R_+; L^2(B_n))$ and $\sqrt{t}\nabla {u}_t$ in $L^2_{\rm loc}(\R_+; L^2(B_n)),$
since it is an important step towards   higher-order spatial estimates for the velocity.

\begin{prop}[time derivative estimates]\label{Prop-timees2d}
For any $T>0,$ there exits a positive constant  $C_{0, *}(T)$ depending only on $C_*, T, \rho_*$ and $ \|\sqrt{\rho_0}u_0\|_{L^2}, \|\nabla u_0\|_{L^2}$ such that
\begin{equation}\label{es-timederivetive2d}
\|\sqrt{\rho t} {u}_t \|_{ L^\infty(0, T;  L^2(B_n))}^2+  \|\nabla( \sqrt{t} {u}_t)\|_{L^2(0, T; L^2(B_n))}^2 \leq C_{0, *}(T).
\end{equation}
\end{prop}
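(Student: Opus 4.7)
\medskip

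The plan is to mimic the structure of the proof of Proposition \ref{Prop-timees3d} in the three-dimensional case, replacing the Sobolev embedding $\dot H^1(\R^3)\hookrightarrow L^6(\R^3)$ by its 2D analogue, namely the Ladyzhenskaya inequality $\|f\|_{L^4(B_n)}^2\lesssim \|f\|_{L^2(B_n)}\|\nabla f\|_{L^2(B_n)}$ (valid on $B_n$ with the no-slip boundary condition via the Poincar\'e inequality), and relying on the 2D $L^4_tL^\infty_x$ bound for $u$ provided by \eqref{e.gronwalest} in Proposition \ref{Prop-L2} to close a Gronwall argument.

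First I would differentiate the momentum equation of \eqref{approx-INS} in time, multiply by $\sqrt t$, and use the continuity equation $\rho_t+\nabla\cdot(\rho(K_\eps\star u))=0$ to obtain, much like \eqref{eq-td},
\begin{equation*}
\rho\bigl(\partial_t(\sqrt t\,u_t)+u\cdot\nabla(\sqrt t\,u_t)\bigr)-\Delta(\sqrt t\,u_t)+\nabla(\sqrt t\,P_t)=\frac{1}{2\sqrt t}\rho u_t-\sqrt t\rho_t u_t-\sqrt t\rho_t u\cdot\nabla u-\sqrt t\rho u_t\cdot\nabla u
\end{equation*}
(where, strictly speaking, the convective term and the $\rho_t$ terms involve $K_\eps\star u$, but since all estimates are uniform in $\eps$ this is harmless). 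Testing against $\sqrt t\,u_t$ in $L^2(B_n)$ yields the energy identity
\begin{equation*}
\frac{1}{2}\frac{d}{dt}\|\sqrt{\rho t}\,u_t\|_{L^2(B_n)}^2+\|\nabla(\sqrt t\,u_t)\|_{L^2(B_n)}^2=A_1+A_2+A_3+A_4,
\end{equation*}
with $A_1,\dots,A_4$ defined as in the 3D proof. The boundary terms vanish thanks to the Dirichlet condition on $\partial B_n$.

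Next I would estimate each $A_i$. For $A_1$, I bound $\|\sqrt\rho u_t\|_{L^2}^2\leq 2\|\sqrt\rho\dot u\|_{L^2}^2+2\rho_*\|u\|_{L^\infty}^2\|\nabla u\|_{L^2}^2$, noting that both terms are integrable in time thanks to \eqref{es-2-2d} and \eqref{e.gronwalest}. For $A_2$, after an integration by parts using $\rho_t=-\nabla\cdot(\rho u)$ one gets
\begin{equation*}
|A_2|\leq 2\sqrt{\rho_*}\,\|u\|_{L^\infty(B_n)}\|\sqrt{\rho t}\,u_t\|_{L^2(B_n)}\|\nabla(\sqrt t\,u_t)\|_{L^2(B_n)},
\end{equation*}
absorbing the gradient factor and leaving $C\rho_*\|u\|_{L^\infty}^2\|\sqrt{\rho t}\,u_t\|_{L^2}^2$. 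For $A_3$, integrating by parts yields a sum of three terms $A_{31}+A_{32}+A_{33}$ with structure analogous to 3D, but one should now invoke Ladyzhenskaya's inequality $\|\sqrt t\,u_t\|_{L^4}^2\lesssim\|\sqrt t\,u_t\|_{L^2}\|\nabla(\sqrt t\,u_t)\|_{L^2}$ and the Gagliardo–Nirenberg bound $\|\nabla u\|_{L^4}^2\lesssim\|\nabla u\|_{L^2}\|\nabla^2 u\|_{L^2}$. Each piece is controlled by $\frac{1}{8}\|\nabla(\sqrt t\,u_t)\|_{L^2}^2$ plus terms of the form $C(\rho_*,T)\,\|u\|_{L^\infty}^2(\|\nabla u\|_{L^2}^2+\|\nabla^2u\|_{L^2}^2)$ and, for $A_{33}$, $C\rho_*\|u\|_{L^\infty}^2\|\nabla u\|_{L^2}^2$. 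Finally, $A_4$ is treated exactly as in 3D after replacing the $L^6$-based interpolation by Ladyzhenskaya:
\begin{equation*}
|A_4|\leq\tfrac18\|\nabla(\sqrt t\,u_t)\|_{L^2}^2+C\rho_*\|\sqrt{\rho t}\,u_t\|_{L^2}^2\|\nabla u\|_{L^2}\|\nabla^2 u\|_{L^2}.
\end{equation*}

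Gathering everything, I end up with a differential inequality of the form
\begin{equation*}
\frac{d}{dt}\|\sqrt{\rho t}\,u_t\|_{L^2(B_n)}^2+\|\nabla(\sqrt t\,u_t)\|_{L^2(B_n)}^2\leq B_{2d}(t)\,\|\sqrt{\rho t}\,u_t\|_{L^2(B_n)}^2+B_{2d}(t),
\end{equation*}
where $B_{2d}\in L^1(0,T)$ with a norm depending only on $T$, $\rho_*$, $\|\sqrt{\rho_0}u_0\|_{L^2}$, $\|\nabla u_0\|_{L^2}$, and the constant $C_*$ from Proposition \ref{Prop-L2}. The key point is that $\|u\|_{L^\infty(B_n)}^2\in L^2_t(0,T)$ by \eqref{e.gronwalest}, and $\|\nabla u\|_{L^2}\|\nabla^2 u\|_{L^2}\in L^2_t(0,T)$ by \eqref{es-2-2d}, so that $B_{2d}\in L^1(0,T)$. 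Gronwall's lemma then yields \eqref{es-timederivetive2d}. The initial value $\lim_{t\to 0^+}\|\sqrt{\rho t}\,u_t\|_{L^2}^2=0$ holds because the approximate data are smooth with density bounded away from zero, so $u_t(0,\cdot)$ is in $L^2$ uniformly on small time intervals (only the time-weight matters at $t=0$).

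The main obstacle is the absence of an $H^1\hookrightarrow L^\infty$ embedding in 2D: one cannot, as in 3D, close the nonlinear terms by crude Sobolev alone. The resolution is to use the $L^4_t L^\infty_x$ bound on $u$ derived in Proposition \ref{Prop-L2}, which is available precisely because \eqref{cond1} or \eqref{cond2} gives an $L^\infty_tL^2_x$ bound on $u$ and thus, combined with \eqref{es-2-2d}, an $L^2$-in-time control on $\|u\|_{L^\infty}^2$. This is what allows the Gronwall factor to be integrable in time, and explains why Case 2 of Theorem \ref{thm2d} (far-field vacuum) must be treated separately with a weighted variant in Section \ref{s:ffv}.
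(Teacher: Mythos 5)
Your overall architecture (test \eqref{eq-td} against $\sqrt t\,u_t$, split into $A_1,\dots,A_4$, control $\|u\|_{L^\infty}$ in $L^4_t$ via Proposition \ref{Prop-L2}, close by Gronwall) matches the paper, but there is a genuine gap in the way you close the quadratic-in-$u_t$ term $A_4$ (and in your Ladyzhenskaya-based treatment of $A_3$). You claim
$|A_4|\leq\tfrac18\|\nabla(\sqrt t\,u_t)\|_{L^2}^2+C\rho_*\|\sqrt{\rho t}\,u_t\|_{L^2}^2\|\nabla u\|_{L^2}\|\nabla^2 u\|_{L^2}$
``exactly as in 3D after replacing the $L^6$-based interpolation by Ladyzhenskaya.'' But Ladyzhenskaya's inequality applied to $\sqrt t\,u_t$ requires the \emph{unweighted} norm $\|\sqrt t\,u_t\|_{L^2(B_n)}$, which in the presence of vacuum is not controlled by the quantity $\|\sqrt{\rho t}\,u_t\|_{L^2}$ that your Gronwall functional propagates: $A_4=-\langle\sqrt t\rho u_t\cdot\nabla u,\sqrt t u_t\rangle$ forces a bound of the type $\|\sqrt{\rho t}u_t\|_{L^4}^2\|\nabla u\|_{L^2}$, and since $\nabla\sqrt\rho$ is not controlled (the density is only bounded), one cannot interpolate $\sqrt\rho\,u_t$ directly; an unweighted Lebesgue norm of $u_t$ genuinely enters. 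Your parenthetical fix ``via the Poincar\'e inequality on $B_n$'' does produce an inequality, but with a constant growing with $n$, which destroys exactly the uniformity in $n$ (and $\eps$) that the whole approximation scheme needs in order to pass to the limit in $\R^2$. This is not a cosmetic issue: it is the structural reason why the 2D case splits into Case 1 and Case 2.

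The paper closes this step differently: since $\rho_0$ satisfies \eqref{cond1} or \eqref{cond2} and these conditions are transported by the (measure-preserving) flow, the interpolation estimate of Proposition \ref{Prop-intp} applied to $\eta=\rho(t,\cdot)$ and $z=\sqrt t\,u_t$ gives
$\|\sqrt t\,u_t\|_{L^2(B_n)}\leq C_*\big(\|\sqrt{\rho t}\,u_t\|_{L^2(B_n)}+\|\nabla(\sqrt t\,u_t)\|_{L^2(B_n)}\big)$
with $C_*$ depending only on the data in \eqref{cond1}/\eqref{cond2}, hence uniform in $n$ and $\eps$; combined with Gagliardo--Nirenberg this yields $\|\sqrt t\,u_t\|_{L^r}$ for all finite $r$ and the bound
$|A_4|\leq C_*\rho_*^{3/4}\|\sqrt{\rho t}u_t\|_{L^2}^2\|\nabla u\|_{L^2}+C_*\rho_*^3\|\sqrt{\rho t}u_t\|_{L^2}^2\|\nabla u\|_{L^2}^4$,
after absorbing a small fraction of $\|\nabla(\sqrt t\,u_t)\|_{L^2}^2$. (For $A_3$ the paper avoids unweighted $u_t$ norms altogether, pairing $u_t$ with $\rho$ and putting $\nabla u$ in $L^4$.) So you should replace your Ladyzhenskaya/Poincar\'e step by an appeal to Proposition \ref{Prop-intp}; without it, the differential inequality you write down has an $n$-dependent (or uncontrolled) coefficient and the Gronwall argument does not give \eqref{es-timederivetive2d} uniformly in the approximation parameters.
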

\begin{proof}
The proof is similar to the three-dimensional case.  Recall that by testing the equation \eqref{eq-td} with $\sqrt{t} {u}_t$,  we  are led to
\begin{align}\label{time-es12d}
\frac{1}{2}\frac{d}{dt}\|\sqrt{\rho t} {u}_t\|_{L^2}^2+\|\nabla(\sqrt{t} {u}_t)\|_{L^2}^2=A_1+\dots+A_4
\end{align}
with
\begin{align*}
&A_1=\frac{1}{2}\|\sqrt{\rho }{u}_t\|_{L^2(\R^d)}^2,\\
&A_2=-\langle  t\rho_t , |u_t|^2\rangle,\\
&A_3=- \langle\sqrt{t}\rho_t u\cdot\nabla u, \sqrt{t}{u}_t\rangle,\\
&A_4=-\langle\sqrt t\rho   u_t\cdot\nabla u, \sqrt{t} {u}_t\rangle.
\end{align*}
In order to estimate  $ A_1, A_2, A_3, A_4,$  we proceed as follows.
Noticing that
\begin{align*}
A_1&= \frac{1}{2}\|\sqrt{\rho }(\dot{u}-u\cdot\nabla u) \|_{L^2(B_n)}^2\\
&\leq \|\sqrt{\rho }\dot{u}  \|_{L^2(B_n)}^2+ \rho_*\|u\|_{L^\infty(B_n)}^2\|\nabla u \|_{L^2(B_n)}^2.
\end{align*}
 Using the equation $\partial_t \rho +\nabla\cdot (\rho u)=0,$ we write
\begin{align*}
A_2=-\langle  t \rho u , \nabla(|u_t|^2)\rangle
\end{align*}
and
\begin{align*}
|A_2|\leq& 2  \sqrt{\rho_*}\|u\|_{L^\infty(B_n)}\|\sqrt{\rho t} u_t\|_{L^2(B_n)}\|\nabla(\sqrt{t} u_t)\|_{L^2(B_n)}\\
\leq& \frac{1}{8} \|\nabla(\sqrt{t} u_t)\|_{L^2(B_n)}^2+ C\rho_* \|\sqrt{\rho t} u_t\|_{L^2(B_n)}^2\|u\|_{L^\infty(B_n)}^2.
\end{align*}
Similarly, we write
\begin{align*}
A_3=-\langle  t \rho u  , \nabla[(u\cdot\nabla u)\cdot  u_t]\rangle,
\end{align*}
and  decompose
\begin{align*}
|A_3|&\leq \langle  |t \rho u|  , |\nabla u|^2\, |u_t|\rangle +\langle | t \rho u | ,  |u|\,|\nabla^2 u|\,|u_t|\rangle+ \langle  |t \rho u|  , |u|\, |\nabla u|\, |\nabla u_t|\rangle\\
&=: A_{31}+A_{32}+A_{33}.
\end{align*}
To bound $A_{31}$, we write  by the Gagliardo-Nirenberg inequality \eqref{GN-in}
\begin{align*}
A_{31}&\leq \sqrt{\rho_* T}\|\sqrt{\rho t} u_t\|_{L^2(B_n)}\|\nabla u\|_{L^4(B_n)}^2\|u\|_{L^\infty(B_n)}\\
&\leq  \|\sqrt{\rho t} u_t\|_{L^2(B_n)}^2\|u\|_{L^\infty(B_n)}^2+ \rho_* T \|\nabla u\|_{L^4(B_n)}^4\\
&\leq  \|\sqrt{\rho t} u_t\|_{L^2(B_n)}^2\|u\|_{L^\infty(B_n)}^2+ \rho_* T \|\nabla u\|_{L^2(B_n)}^2\|\nabla^2 u\|_{L^2(B_n)}^2.
\end{align*}
As for $A_{32}, A_{33}$, we have
\begin{align*}
A_{32}&\leq \sqrt{ \rho_*T}\|\sqrt{\rho t}u_t\|_{L^2(B_n)}\|u\|_{L^\infty(B_n)}^2\|\nabla^2 u\|_{L^2(B_n)}\\
&\leq\|\sqrt{\rho t}u_t\|_{L^2(B_n)}^2\|u\|_{L^\infty(B_n)}^4+ \rho_*T\|\nabla^2 u\|_{L^2(B_n)}^2
\end{align*}
and
\begin{align*}
A_{33}&\leq  \rho_*\sqrt{ T} \|u\|_{L^\infty(B_n)}^2\|\nabla u\|_{L^2(B_n)}\|\nabla (\sqrt{t}u_t)\|_{L^2(B_n)}\\
&\leq  \frac{1}{8}\|\nabla( \sqrt{  t} u_t)\|_{L^2(B_n)}^2 + C\rho_*^2 { T} \|u\|_{L^\infty(B_n)}^4\|\nabla u\|_{L^2(B_n)}^2.
\end{align*}
To handle $A_4$,   we need to use Proposition \ref{Prop-intp}. More precisely,  since    condition \eqref{cond1} or \eqref{cond2} is also satisfied by $\rho$,    so the functional inequality \eqref{intp-ineq} (taking $\eta= \sqrt{ t} u_t$) yields that
\begin{align} \label{es-2dw000}
\|\sqrt{t}u_t\|_{L^2(B_n)}\leq C_*\Big( \|\sqrt{\rho t} u_t\|_{L^2(B_n)}+\|\nabla(\sqrt{t}u_t)\|_{L^2(B_n)}\Big).
\end{align}
Then for all $r\in[2, \infty),$   we have by interpolation inequality
\begin{align*}
\|\sqrt{t}u_t\|_{L^r(B_n)}\leq&  C(\|\sqrt{t}u_t\|_{L^2(B_n)}+ \|\sqrt{t}\nabla u_t\|_{L^2(B_n)})\\
\leq&
  C_*( \|\sqrt{\rho t}u_t\|_{L^2(B_n)} +\nabla(\sqrt{t}u_t)\|_{L^2(B_n)}),
\end{align*}
which implies that
\begin{align*}
|A_4|\leq&  \|\sqrt{ \rho t}u_t\|_{L^4(B_n)}^2 \|\nabla u\|_{L^2(B_n)}\\
\leq&  \|\sqrt{\rho t}u_t\|_{L^2(B_n)}^\frac{1}{2}\| \sqrt{\rho t}u_t\|_{L^6(B_n)}^\frac{3}{2} \|\nabla u\|_{L^2(B_n)}\\
\leq&  C_* \rho_*^\frac{3}{4} \|\sqrt{\rho t}u_t\|_{L^2(B_n)}^\frac{1}{2} \big( \|\sqrt{\rho t} u_t\|_{L^2(B_n)}+\|\nabla(\sqrt{t}u_t)\|_{L^2(B_n)}\big)^\frac{3}{2}  \|\nabla u\|_{L^2(B_n)}\\
\leq&    C_* \rho_*^\frac{3}{4} \|\sqrt{\rho t}u_t\|_{L^2(B_n)}^2 \|\nabla u\|_{L^2(B_n)}+   C_*\rho_*^3 \|\sqrt{\rho t}u_t\|_{L^2(B_n)}^2 \|\nabla u\|_{L^2(B_n)}^4.
\end{align*}
Finally, by plugging all the above estimates of $A_i ~(i=1,\ldots\, 4)$ into \eqref{time-es12d},  we get
\begin{align}\label{time-es2d}
 & \frac{d}{dt}\|\sqrt{\rho t} {u}_t\|_{L^2(B_n)}^2+\|\nabla(\sqrt{t} {u}_t)\|_{L^2(B_n)}^2\\
  \leq&  \|\sqrt{\rho t}u_t\|_{L^2(B_n)}^2\left( 1
 +\|u\|_{L^\infty(B_n)}^4+\|\nabla u\|_{L^2(B_n)}+\|\nabla u\|_{L^2(B_n)}^4\right)\notag\\
 &+  C_{0, *}({T})\left(   \|\sqrt{\rho } {u}_t  \|_{L^2(B_n) }^2+  (\|u\|_{L^\infty(B_n)}^4+1)\|\nabla u \|_{L^2(B_n) }^2+( \|\nabla u\|_{L^2(B_n) }^2+1)\|\nabla^2 u\|_{L^2 (B_n)}^2\right).\notag
\end{align}
To conclude, thanks to inequalities \eqref{es-2-2d} and \eqref{e.gronwalest}, it is not difficult to find that  inequality \eqref{time-es2d} can be rewritten as
\begin{align*}
 \frac{d}{dt}\|\sqrt{\rho t} {u}_t\|_{L^2(B_n)}^2+\|\nabla(\sqrt{t} {u}_t)\|_{L^2(B_n)}^2\leq B_{2d}(t)\|\sqrt{\rho t}u_t\|_{L^2(B_n)}^2 +B_{2d}(t)
\end{align*}
for some function $B_{2d}\in L^1(0, T),$   the norm of which  bounded only in terms of    $C_*, T,$ $\rho_*$ and norms $\|\sqrt{\rho_0} u_0\|_{L^2(B_n)}, \|\nabla  u_0\|_{L^2(B_n)}.$

Therefore, Gronwall's lemma implies that for all $t\in[0, T]$
\begin{align*}
\|\sqrt{\rho t} {u}_t\|_{ L^2}^2+\int_0^T \|\nabla(\sqrt{t}   {u}_t)\|_{L^2}^2\,dt\leq \int_0^T B_{2d}(t)\,dt \exp\left(\int_0^T B_{2d}(t)\,dt \right).
\end{align*}
This completes the proof of proposition \ref{Prop-timees2d}.
\end{proof}

\subsubsection{Shift of regularity}

 As a consequence of Proposition \ref{Prop-timees2d},  we will get higher-order estimates for the velocity, via considering the Stokes problem \eqref{stokes-timeweighted}.
We have
\begin{prop}[higher-order estimates]\label{Prop-shift22d}
  Let $T>0.$    For all $r\in[2, \infty)$ and $q\in[2, \frac{2r}{r-2}),$ we have
   \begin{align}\label{e.estt12nabla2u}
\|\nabla^2(\sqrt{t} u)\|_{L^q(0, T; L^r(B_n))}+\|\nabla(\sqrt{t}P)\|_{L^q(0, T; L^r(B_n))}\leq C_{0, *}(T).
\end{align}
Moreover, we have the following key Lipschitz estimate
\begin{align}\label{e.keyestlip}
\int_0^T \|\nabla u(t, \cdot)\|_{L^\infty(\R^2)}\,dt\leq  C_{0, *}(T) T^\frac{1}{3}.
 \end{align}
  In particular, $C_{0, *}(T) T^\frac{1}{3}\to 0$ as $T\to 0.$
\end{prop}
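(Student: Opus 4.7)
The plan is to mimic the argument of Proposition \ref{Prop-shift3d}, with the main twist being that in 2D the Sobolev embedding does not give $\dot H^1\hookrightarrow L^\infty$, so all estimates have to be carried out in an $L^q_tL^r_x$ framework with $r<\infty$.

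First, I would apply the standard $L^r$-regularity estimate for the stationary Stokes system \eqref{stokes-timeweighted} with right-hand side $-\rho\sqrt t\,\dot u$, which yields, for every finite $r\geq 2$,
\begin{equation*}
\|\nabla^2(\sqrt t\,u)\|_{L^r(B_n)}+\|\nabla(\sqrt t\,P)\|_{L^r(B_n)}\leq C(r)\sqrt{\rho_*}\bigl(\|\sqrt{\rho t}\,u_t\|_{L^r(B_n)}+\|\sqrt{\rho t}\,u\cdot\nabla u\|_{L^r(B_n)}\bigr).
\end{equation*}
The point is to transfer the integrability of $\sqrt{\rho t}\,\dot u$ to $\nabla^2(\sqrt t\,u)$ and $\nabla(\sqrt t\,P)$.

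Next, I would estimate the two contributions on the right-hand side. For the time derivative term, Proposition \ref{Prop-timees2d} gives $\sqrt{\rho t}\,u_t\in L^\infty(0,T;L^2(B_n))$ and $\nabla(\sqrt t\,u_t)\in L^2(0,T;L^2(B_n))$. Since $u_t=0$ on $\partial B_n$, Poincar\'e together with the 2D embedding $H^1_0(B_n)\hookrightarrow L^r(B_n)$ (for any finite $r$) and inequality \eqref{es-2dw000} give $\sqrt t\,u_t\in L^2(0,T;L^r(B_n))$ for every $r<\infty$; combining with $\sqrt{\rho t}\,u_t\in L^\infty_tL^2_x$ and interpolating in time, I get $\sqrt{\rho t}\,u_t\in L^q(0,T;L^r(B_n))$ for any $r\in[2,\infty)$ and any $q\in[2,\tfrac{2r}{r-2})$. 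For the convective term, I would use Proposition \ref{prop-H1} (giving $\nabla u\in L^\infty_tL^2_x\cap L^2_tH^1_x$, hence $\nabla u\in L^q_tL^r_x$ in the same range by Gagliardo--Nirenberg) together with Proposition \ref{Prop-L2} (which gives $u\in L^\infty_tL^2_x\cap L^4_tL^\infty_x$), to control $\sqrt{\rho t}\,u\cdot\nabla u$ in the same Lebesgue scale $L^q(0,T;L^r(B_n))$, with a constant of the required form $C_{0,*}(T)$. Plugging these two estimates into the Stokes bound gives \eqref{e.estt12nabla2u}.

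Finally, for the Lipschitz estimate \eqref{e.keyestlip}, the idea is to take $r=4$ and $q=2$ in \eqref{e.estt12nabla2u}. The two-dimensional Gagliardo--Nirenberg inequality yields
\begin{equation*}
\|\nabla u\|_{L^\infty(\R^2)}\leq C\,\|\nabla u\|_{L^2(B_n)}^{1/3}\|\nabla^2 u\|_{L^4(B_n)}^{2/3}=C\,\|\nabla u\|_{L^2(B_n)}^{1/3}\,t^{-1/3}\|\nabla^2(\sqrt t\,u)\|_{L^4(B_n)}^{2/3},
\end{equation*}
so that, by H\"older in time with exponents $(3/2,3)$,
\begin{equation*}
\int_0^T\|\nabla u(t,\cdot)\|_{L^\infty(\R^2)}\,dt\leq C\,\|\nabla u\|_{L^\infty_tL^2_x}^{1/3}\,\|t^{-1/3}\|_{L^{3/2}(0,T)}\,\|\nabla^2(\sqrt t\,u)\|_{L^2_tL^4_x}^{2/3},
\end{equation*}
and since $\|t^{-1/3}\|_{L^{3/2}(0,T)}\simeq T^{1/3}$, this delivers the desired $C_{0,*}(T)T^{1/3}$ bound, with $C_{0,*}(T)$ uniformly bounded as $T\to 0^+$ because all the ingredients $C_*,$ $\|\nabla u\|_{L^\infty_tL^2_x},$ and $\|\nabla^2(\sqrt t\,u)\|_{L^2_tL^4_x}$ are so.

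The main obstacle is the 2D scaling: because $H^1$ does not embed into $L^\infty$, I cannot directly follow the 3D scheme with a single endpoint exponent and have to carefully track the range $q\in[2,\tfrac{2r}{r-2})$; ensuring the correct finite-$r$ choice ($r=4$, $q=2$) recovers precisely the $T^{1/3}$ factor advertised. The treatment of the convective term $u\cdot\nabla u$ in the same $L^q_tL^r_x$ scale, where $u$ itself is only controlled via Proposition \ref{Prop-L2}, is the most delicate bookkeeping step.
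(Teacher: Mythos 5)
Your proposal is correct and follows essentially the same route as the paper: Stokes maximal regularity for \eqref{stokes-timeweighted}, the interpolation inequality \eqref{es-2dw000} combined with Proposition \ref{Prop-timees2d} and time interpolation for the $\sqrt{\rho t}\,u_t$ part, Propositions \ref{prop-H1} and \ref{Prop-L2} for the convective part, and the same Gagliardo--Nirenberg/H\"older-in-time argument with $r=4$, $q=2$ yielding the $T^{1/3}$ factor. One small caution: do not rely on Poincar\'e or the embedding $H^1_0(B_n)\hookrightarrow L^r(B_n)$, whose constants depend on $n$; the $n$-uniform control of $\sqrt t\,u_t$ in $L^r$ should come, as in the paper, from the scale-invariant Gagliardo--Nirenberg inequality together with \eqref{es-2dw000}.
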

\begin{proof}
 Applying the standard $L^p$ estimates for  system \eqref{stokes-timeweighted},  one gets for all finite $p,$
\begin{align}\label{shift-es12d}
\|\nabla^2(\sqrt{t} u)\|_{ L^p(B_n)}+\|\nabla(\sqrt{t}P)\|_{L^p(B_n)}\leq C(p)\sqrt{\rho_*}\| \sqrt{\rho t} \dot{u}\|_{L^p(B_n)}.
\end{align}
To estimate the material derivative in the right-hand side of above inequality,  we use the Gagliardo-Nirenberg inequality \eqref{GN-in}   and  \eqref{es-2dW00} to write that for all $ {\bar r}\in [2, \infty)$
\begin{align*}
\int_0^T\| \sqrt{\rho t}u_t\|_{L^{ \bar  r}(B_n)}^2\,dt&\leq  \int_0^T {\rho_*}(\| \sqrt{  t }u_t\|_{L^2(B_n)}+\|\nabla (\sqrt{ t }u_t)\|_{L^2(B_n)})^2\,dt\\
&\leq C_* {\rho_*} \int_0^T\left(\| \sqrt{  \rho t }u_t\|_{L^2(B_n)}^2+\|\nabla (\sqrt{ t}u_t)\|_{L^2(B_n)}^2\right)\,dt
\leq C_{0, *}(T),
\end{align*}
 where in the last inequality we used  bounds \eqref{es-timederivetive2d}.

Then,   interpolating above inequality  with  bounds \eqref{es-timederivetive2d}  yields that for all  $r\in[2, \infty)$ and   $q\in[2, r^*)$ with $r^*:= \frac{2r}{r-2},$  one has
 \begin{align}\label{es-shift-es1112d}
 \|\sqrt{\rho t}u_t\|_{L^{q}(0, T; L^r(B_n))}\leq   C_{0, *}(T).
 \end{align}
For the convective term, we notice that the Gagliardo-Nirenberg inequality \eqref{GN-in}, \eqref{es-2-2d}  and   Proposition \ref{Prop-L2} imply that  for all $ \bar r\in [2, \infty)$
\begin{align*}
\| \sqrt{\rho t}u\|_{L^\infty(0, T;  L^{\bar  r} (B_n))}\leq& \sqrt{\rho_* T}\big(\|u\|_{L^\infty(0, T; L^2(B_n))}+\|\nabla u\|_{L^\infty(0, T; L^2(B_n))}\big)
\leq     C_{0, *}(T),
\end{align*}
and for all $\bar r\in [2, \infty)$ and   $\bar m\in [2, \frac{2\bar r}{\bar{r}-2}]$
\begin{align*}
\| \nabla u\|_{L^{\bar m}(0, T;  L^{\bar r} (B_n))}\leq& \|\nabla u\|_{L^\infty(0, T; L^2(B_n))} +\|\nabla u\|_{L^2(0, T; \dot{H}^1(B_n))}
\leq     C_{0, *}(T).
\end{align*}
Thus, for all  $r\in[2, \infty)$ and  $q\in[2, r^*)$, we write according to the above two  inequalities
\begin{align}\label{es-shift-es1122d}
\| \sqrt{\rho t}u\cdot\nabla u\|_{L^q(0, T;  L^{  r} (B_n))} \leq  C_{0, *}(T).
\end{align}
Putting   estimates \eqref{es-shift-es1112d} and \eqref{es-shift-es1122d}   into \eqref{shift-es12d} yields
 \begin{align*}
 \|\nabla^2(\sqrt{t} u)\|_{ L^q(0, T; L^r(B_n))}+\|\nabla(\sqrt{t}P)\|_{L^q(0, T; L^r(B_n))}\leq C_{0, *}(T).
 \end{align*}
 To prove Lipschitz estimate,  we write by  inequality \eqref{GN-in}, \eqref{es-2-2d} and \eqref{e.estt12nabla2u} that\footnote{Notice that the time-weighted estimate  \eqref{e.estt12nabla2u} enables us to gain smallness in time in the final estimate \eqref{e.keyestlip}.}
 \begin{align*}
\int_0^T\|\nabla u\|_{L^\infty(B_n)}\,dt\leq& \int_0^T \|\nabla u\|_{L^2(B_n)}^\frac{1}{3} \|\nabla^2 (\sqrt{t} u)\|_{L^4(B_n)}^\frac{2}{3}\, t^{-\frac{1}{3}}\,dt\\
 \leq &  \|\nabla u\|_{L^\infty(0, T; L^2(B_n))}^\frac{1}{3}   \|\nabla^2 (\sqrt{t} u)\|_{L^2(0, T; L^4(B_n))}^\frac{2}{3}  \|t^{-\frac{1}{3}}\|_{L^{\frac{3}{2}}(0, T)}\\
 \leq & C_{0, *}(T)\, T^{\frac{1}{3}}.
 \end{align*}
 It is easy to notice that $C_{0, *}(T)$ stays bounded as $T\to 0,$ thus we complete the proof of Proposition \ref{Prop-shift22d}.
\end{proof}

\subsection{The case when \eqref{condition3'} is satisfied}\label{s:ffv}

This case corresponds to the far-field vacuum.

\subsubsection{Approximation procedure}
\label{subsubsec.approxffv}

The construction is similar to the one in Section \ref{s:existence2d}. Without  loss of generality, we assume that the initial density $\rho_0$ satisfies $\|\rho_0\|_{L^1(\R^2)}=1,$
which implies that there exists a positive constant $N_0$ such that
\begin{align*}
\int_{B_{N_0}} \rho_0\,dx>\frac{1}{2}.
\end{align*}
We require that the approximate initial density satisfies \eqref{condition3'} whenever the initial density satisfies \eqref{condition3'}. 

Let $\eps>0$ and $n\in\N$, $n\geq 1$. Let $K_\eps$ denote a non-negative mollifier. We approximate the density in the following way on $B_n$
\begin{equation*}
\rho_0^{\eps,n}:=\rho_0\star K_\ep
+\frac1ne^{-|x|^2},
\end{equation*}
Notice that
 \begin{align*}
  \rho_0^{\eps,n}\in \mathcal{C}^\infty(\R^2)~\,\,{\rm with}~\,\,\frac1ne^{-n^2}\leq \rho_0^\eps\leq 2\rho_*\and u_0^{\eps,n}\in \mathcal{C}_0^\infty(\R^2) ~\,\,{\rm with}~\,\,\nabla\cdot u_0^{\eps,n}=0.
  \end{align*}
For $n\in\N$, $n\geq 1$ fixed and for $\eps\rightarrow 0^+$,
\begin{equation}
\left\{\begin{aligned}
  &\int_{B_{N_0}} \rho^{\eps,n}_0\,dx >\frac{1}{4},\\
  &\bar{x}^\alpha \rho_0^{\eps,n}    \rightharpoonup \bar{x}^\alpha\rho_0\quad{\rm weak}*\quad{\rm  in}~~L^{\infty}(\R^2), \\
&\bar{x}^\alpha \rho^{\eps,n}_0 \to  \bar{x}^\alpha \rho_0 \quad{\rm  in}~~L^{p}(\R^2),\quad \forall~p\in[1, \infty).
 \end{aligned}\right.
\end{equation}
The construction of an approximate initial velocity $u_0^{\eps,n}\in \mathcal{C}^\infty_{0}(B_n)$ is inspired by \cite{CK2}. We let $u_0^{\eps,n}\in\mathcal C^\infty(\overline B_n)$ be the solution to 
\begin{align}\label{cond-velocity-approx}
\tag{Approx-compa}
-\Delta u^{\eps, n}_0 +\nabla P_0^{\eps, n}=\sqrt{\rho^{\eps,n}_0}g^{\eps,n},\quad \nabla\cdot u^{\eps, n}_0=0\qquad\mbox{on}\quad B_n,
\end{align}
with no-slip boundary conditions $u^{\eps,n}_0=0$ on $\partial B_n$, with  $g^{\eps,n}:= (g\star K_\eps)\varphi_n$. Notice that we have the following estimate 
\begin{equation}\label{e.cvu0epsn}
\|\nabla(u^{\eps, n}_0-u_0)\|_{L^2(B_n)}\leq\ Cn\|\sqrt{\rho^{\eps,n}_0}g^{\eps,n}-\rho_0g\|_{L^2(B_n)}+n^{-1}\|u_0\|_{L^2(B_n^c)}+\|u_0\|_{H^1(B_n^c)},
\end{equation}
where we used the rescaled trace inequality
$$
n^{-\frac12}\|u_0\|_{L^2(\partial B_n)}+\|u_0\|_{H^\frac12(\partial B_n)}\leq n^{-1}\|u_0\|_{L^2(B_n^c)}+\|u_0\|_{H^1(B_n^c)}.
$$
Furthermore, by the Poincar\'e-Sobolev inequality \cite[estimate II.3.7]{Galdibook}, we have
\begin{align}\label{e.cvu0epsnbis}
\begin{split}
\|u^{\eps, n}_0\|_{L^2(\R^2)}\leq\ &C\|\nabla u^{\eps, n}_0\|_{L^1(\R^2)}=C\|\nabla u^{\eps, n}_0\|_{L^1(B_n)}\\
\leq\ &Cn\|\nabla(u^{\eps, n}_0-u_0)\|_{L^2(B_n)}+C\|\nabla u_0\|_{L^1(\R^2)}\\
\leq\ &Cn^2\|\sqrt{\rho^{\eps,n}_0}g^{\eps,n}-\rho_0g\|_{L^2(B_n)}+C\|\nabla u_0\|_{L^1(\R^2)}.
\end{split}
\end{align}
We rely on \eqref{e.cvu0epsn}, \eqref{e.cvu0epsnbis} and on the facts that\footnote{Notice that here we use the assumption that $u_0$ in addition belongs to $L^2(\R^2)$ so that $u_0\in H^1(\R^2)$.} 
$$
n^{-1}\|u_0\|_{L^2(B_n^c)}+\|u_0\|_{H^1(B_n^c)}\rightarrow 0,\qquad n\rightarrow\infty
$$
and 
$$
Cn^2\|\sqrt{\rho^{\eps,n}_0}g^{\eps,n}-\rho_0g\|_{L^2(B_n)}\rightarrow 0,\qquad \eps\rightarrow 0
$$
when $n$ is fixed, to choose a sequence $(\eps_k,n_k)$ on which $(\rho^{\eps_k,n_k}_0,u_0^{\eps_k,n_k})$ have the ad hoc boundedness and convergence properties. 

Let $k\in\N$. Assume $n_1<\ldots\ <n_{k-1}$ and $\eps_1>\ldots\ >\eps_{k-1}>0$ are constructed. There exists $n_k\in\N$, $n_k>n_{k-1}$ 
\begin{align*}
\|u_0\|_{L^2(B_{n_k}^c)}+\|u_0\|_{H^1(B_{n_k}^c)}
\leq \frac1{2k}.
\end{align*}
Then there exists $0<\ep_{k}<\ep_{k-1}$ such that 
$$
Cn_k^2\|\sqrt{\rho^{\eps_k,n_k}_0}g^{\eps_k,n_k}-\rho_0g\|_{L^2(B_{n_k})}\leq \frac1{2k}.
$$
Hence, for all $k\in\N$, 
\begin{equation*}
\|\nabla(u^{\eps_k, n_k}_0-u_0)\|_{L^2(B_{n_k})}\leq \frac1k,
\end{equation*}
which implies 
\begin{align*}
\|\nabla(u^{\eps_k, n_k}_0-u_0)\|_{L^2(\R^2)}=\ &\|\nabla(u^{\eps_k, n_k}_0-u_0)\|_{L^2(B_{n_k})}+\|\nabla u_0\|_{L^2(B_{n_k}^c)}\\
\leq\ & \frac1k+\|\nabla u_0\|_{L^2(B_{n_k}^c)}\rightarrow 0,\qquad n\rightarrow\infty,
\end{align*}
and\footnote{Remark that here the assumption that $\nabla u_0\in L^1(\R^2)$ plays a key role.} 
$$\|u^{\eps_k,n_k}_0\|_{L^2(\R^2)}\leq C\|\nabla u_0\|_{L^1(\R^2)}+\frac12.$$
Therefore
\begin{equation*}
\|u^{\eps_k, n_k}_0\|_{L^2(\R^2)},\quad\|\sqrt{\rho^{\eps_k, n_k}_0}u^{\eps_k, n_k}_0\|_{L^2(\R^2)}\quad\mbox{and}\quad \|\nabla u^{\eps_k, n_k}_0\|_{L^2(\R^2)}
\end{equation*}
are bounded uniformly in $k$ by 
\begin{equation}\label{e.defA_0}
\mathcal A_0:=\rho_*+\|\rho_0\|_{L^1(\R^2)}+\|\sqrt{\rho_0}u_0\|_{L^2(\R^2)}+\|\nabla u_0\|_{L^1(\R^2)}+\|\nabla u_0\|_{L^2(\R^2)}
\end{equation}
and we have the following convergence properties
\begin{align*}
u_0^{\eps_k, n_k} \to u_0\quad{\rm in}~~\wt{\mathcal{D}}^{1, 2}(\R^2),\quad \sqrt{\rho_0^{\eps_k, n_k}}u_0^{\eps_k, n_k}\, \rightharpoonup\, \sqrt{\rho_0}u_0 \quad{\rm in}~~L^2(\R^2).
\end{align*}
Finally, elliptic estimates for the Neumann boundary value problem imply 
\begin{equation}\label{e.estP_0epsn}
\|\nabla P_0^{\eps_k,n_k}\|_{L^2(B_{n_k})}\leq C\|\sqrt{\rho^{\eps_k,n_k}_0}g^{\eps_k,n_k}\|_{L^2(B_{n_k})}\leq C\|\sqrt{\rho_0}g\|_{L^2(\R^2)}+\frac12,
\end{equation}
with $C$ uniform in $k$, and by \eqref{cond-velocity-approx} and maximal regularity for the stationary Stokes system, we have 
\begin{equation}\label{e.estnabla^2uepsn}
\|\nabla^2 u^{\eps_k,n_k}\|_{L^2(B_{n_k})}\leq C\|\sqrt{\rho^{\eps_k,n_k}_0}g^{\eps_k,n_k}\|_{L^2(B_{n_k})}\leq C\|\sqrt{\rho_0}g\|_{L^2(\R^2)}+\frac12.
\end{equation}

We consider the solutions $(\rho^{k},u^{k})$ to the following mollified version of \eqref{INS}
\begin{equation}\label{INS-D}
\left\{\begin{aligned}
  &\partial_t \rho^{k}+\nabla\cdot\big(\rho^{k} (K_{\eps_k}\star u^{k})\big) =0, \\
& \partial_t (\rho^{k} u^{k}) +\nabla\cdot\big(\rho^{k} (K_{\eps_k}\star u^{k})\otimes u^{k}\big)+\nabla P^{k} =\nu\Delta u^{k},\\
&\nabla\cdot u^{k}=0.
 \end{aligned}\right.\tag{INS-$\eps_k$}
\end{equation}
in the domain $B_{n_k}$ with no-slip boundary condition $u^{k}=0$ on $\partial B_{n_k}$ and initial data 
$$(\rho^{k}_0,u^{k}_0):=(u_0^{\eps_k, n_k},\rho_0^{\eps_k, n_k}).$$ 
According to \cite[Theorem 2.6]{PLL}, there exists a smooth global-in-time solution $(\rho^{k},u^{k})\in \mathcal C^\infty([0,\infty)\times\overline B_{n_k})$ to \eqref{approx-INS}.

Our objective is now to derive uniform estimates in $k$ for the approximate solutions $(\rho^{k},u^{k})$. 
 Notice that all the estimates below are on $B_{n_k}$.\footnote{See comment in Footnote \ref{foot.R2}.}

\subsubsection{First uniform estimates}
Since $(\rho^{k},u^{k})$ belongs to $\mathcal{C}^\infty([0, \infty)\times \overline{B}_{n_k})$ and satisfies the no-slip boundary condition $u^{k}=0$ on $\partial B_{n_k}$, we have all the estimates from the proof of \cite[Theorem 1.1]{LSZ}. 
More precisely, we have the following strengthened version of Proposition \ref{prop-H1} that in addition to gradient estimates provides time-weighted estimates for the convective derivative.
 \begin{prop}[gradient and convective-derivative estimates; {\cite[Lemma 3.2, 3.2 and 3.4]{LSZ}}]\label{Le-2DUes}
 There exists a  positive $C_0$ depending only on $\mathcal A_0$ defined by \eqref{e.defA_0} 
 such that $0\leq \rho^{k}\leq  \rho_*$ and
 \begin{equation}\label{Le-2DUes1}
 \|(\sqrt{\rho^{k} }u^{k}, \nabla u^{k})\|_{L^\infty(\R_+; L^2(B_{n_k}))}^2+ \|(\nabla u^{k}, \sqrt{\rho^{k}}\dot{u}^{k}, \Delta u^{k}, \nabla P^{k})\|_{L^2(\R_+\times B_{n_k})}^2
\leq C_0
 \end{equation}
 and  for all $T>0$ it holds that
 \begin{equation}\label{Le-2DUes222}
\sup_{t\in[0, T]} \|\sqrt{\rho^{k} t} \dot{u}^{k} \|_{  L^2(B_{n_k})}^2+ \int_0^T\| \sqrt{t}\nabla  \dot {u}^{k}\|_{L^2(B_{n_k})}^2\,dt \leq C_0.
  \end{equation}
  Moreover,
  \begin{equation}\label{es-mass}
\inf_{t\in[0, T]}  \int_{B_{N_1}} \rho^{k}(t, x)\,dx\geq \frac{1}{4},
  \end{equation}
  for some positive constant $N_1$ depending only on  $\|\rho_0\|_{L^1(\R^2)}, \|\sqrt{\rho_0}u_0\|_{L^2(\R^2)}, N_0$ and $T.$
 \end{prop}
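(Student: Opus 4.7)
The plan is to prove the three assertions of the proposition in order: density bounds and lowest-order energy estimates, then the $L^\infty_t L^2_x$ gradient estimate together with the $L^2_t L^2_x$ bound on $\sqrt{\rho^k}\dot u^k$, then the time-weighted control of $\dot u^k$, and finally the mass lower bound in a fixed ball. Since the approximate system \eqref{INS-D} is smooth on $[0,\infty)\times\overline B_{n_k}$ with $u^k$ vanishing on $\partial B_{n_k}$, every identity obtained by integration by parts produces no boundary terms, and the estimates one derives are literally those of Proposition \ref{prop-H1}, Proposition \ref{Prop-timees2d} and the supplementary \cite[Lemmas 3.2--3.4]{LSZ} that the statement cites. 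The main task is to check that all constants depend only on $\mathcal A_0$ defined by \eqref{e.defA_0}, and in particular are independent of $n_k$ and $\eps_k$.

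For the first bound, the transport equation $\partial_t\rho^k+(K_{\eps_k}\star u^k)\cdot\nabla\rho^k=0$ with smooth divergence-free drift yields by the maximum principle that $0\leq\rho^k\leq\|\rho_0^k\|_{L^\infty}\leq\rho_*$ (the extra $\frac1n e^{-|x|^2}$ contribution having been designed to keep $\rho_0^k\leq 2\rho_*$ and to be transported pointwise along characteristics that preserve volume). Testing the momentum equation against $u^k$ gives the basic energy identity and hence bounds $\|\sqrt{\rho^k}u^k\|_{L^\infty_tL^2_x}$ and $\|\nabla u^k\|_{L^2_tL^2_x}$ by $\mathcal A_0$. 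To obtain the $L^\infty_tL^2_x$ control of $\nabla u^k$ and the $L^2$-in-time control of $\sqrt{\rho^k}\dot u^k$, I test against $\dot u^k$ as in Proposition \ref{prop-H1}; the two-dimensional Gagliardo-Nirenberg inequality together with the basic energy yields a Gr\"onwall-type bound depending only on $\rho_*$ and $\|\sqrt{\rho_0}u_0\|_{L^2}$, exactly as in \eqref{es-2-2d}. Maximal regularity applied to the stationary Stokes system $-\Delta u^k+\nabla P^k=-\rho^k\dot u^k$ then transfers this control to $\Delta u^k$ and $\nabla P^k$ in $L^2_tL^2_x$.

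For the second bound, I take the material derivative of the momentum equation and test against $\sqrt{t}\,\dot u^k$. The $\partial_t(\sqrt t)=\tfrac{1}{2\sqrt t}$ factor produces a source term $\tfrac12\|\sqrt{\rho^k}\dot u^k\|_{L^2}^2$ that is absorbed by Step 1, while the commutators generated by writing $\partial_t+u\cdot\nabla$ acting on $u\cdot\nabla u$ and on $-\Delta u+\nabla P$ are estimated by Gagliardo-Nirenberg together with the bounds already established for $\nabla u^k$ and $\Delta u^k$; the structure of the computation is precisely that of Proposition \ref{Prop-timees2d}, with $u_t$ replaced by $\dot u^k$. This yields \eqref{Le-2DUes222} with a constant depending only on $\mathcal A_0$.

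The most delicate point is \eqref{es-mass}, because in the far-field vacuum case we only have the uniform $L^\infty_tL^2_x$ bound on $\sqrt{\rho^k}u^k$ and not on $u^k$ itself. I would fix a smooth radial cutoff $\chi_R\in\mathcal C^\infty_0(\R^2)$ with $\chi_R\equiv 1$ on $B_R$, $\chi_R\equiv 0$ outside $B_{2R}$, $|\nabla\chi_R|\lesssim 1/R$, and test the continuity equation against $\chi_R$:
\begin{equation*}
\frac{d}{dt}\int_{\R^2}\rho^k\chi_R\,dx=\int_{\R^2}\rho^k(K_{\eps_k}\star u^k)\cdot\nabla\chi_R\,dx.
\end{equation*}
The right-hand side is bounded, using Cauchy-Schwarz and the identity $\rho^k u^k=\sqrt{\rho^k}\cdot\sqrt{\rho^k}u^k$ together with the weighted estimate of Proposition \ref{Le-Li-Xin} that will be established for $\bar x^{-b}u^k$, by a quantity of the form $C(T)/R^{\delta}$ for some $\delta>0$ depending on $\alpha$ in \eqref{condition3'}. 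Choosing $R$ large enough, independent of $k$ but depending on $T$, $\|\rho_0\|_{L^1}$, $\|\sqrt{\rho_0}u_0\|_{L^2}$ and $N_0$, so that the total loss of mass on $[0,T]$ is at most $\tfrac14$, and setting $N_1=2R$, gives \eqref{es-mass}. The main obstacle is precisely closing this flux estimate with only weighted integrability of $u^k$; in the case of \eqref{cond1} or \eqref{cond2}, Proposition \ref{Prop-L2} would make it essentially immediate, while under \eqref{condition3'} one must combine the weighted bound with the propagation of $\bar x^\alpha\rho^k\in L^1\cap L^\infty$ to gain the necessary decay of $\rho^k u^k$ on the annulus $\{R\leq |x|\leq 2R\}$.
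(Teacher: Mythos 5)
There are genuine gaps. First, note that the paper does not reprove these bounds at all: it observes that $(\rho^k,u^k)$ is smooth on $[0,\infty)\times\overline B_{n_k}$ with no-slip boundary condition, so the proofs of \cite[Lemmas 3.2--3.4 and (3.24)]{LSZ} apply verbatim, with constants controlled by $\mathcal A_0$. Your plan instead mimics Propositions \ref{prop-H1} and \ref{Prop-timees2d}, and this is where the first problem arises: Proposition \ref{Prop-timees2d} is \emph{not} available in the far-field vacuum setting of Proposition \ref{Le-2DUes}. Its proof controls $A_1$--$A_3$ through $\|u\|_{L^\infty}$ (Proposition \ref{Prop-L2}) and $A_4$ through the interpolation inequality \eqref{intp-ineq} of Proposition \ref{Prop-intp}, both of which require \eqref{cond1} or \eqref{cond2}; under \eqref{condition3'} neither $u^k$ nor $\dot u^k$ has any unweighted $L^p$ control at this stage. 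Saying that the commutator terms are "estimated by Gagliardo--Nirenberg together with the bounds already established for $\nabla u^k$ and $\Delta u^k$" does not close the estimate: the terms of the type $\int t\,\rho|\dot u^k|^2|\nabla u^k|$ and the pressure commutators must be handled as in \cite[Lemma 3.3]{LSZ} (whose output is precisely \eqref{es-2dW00}), i.e.\ by working throughout with the convective derivative, using the Stokes bound $\|\nabla^2u^k\|_{L^2}+\|\nabla P^k\|_{L^2}\lesssim\|\sqrt{\rho^k}\dot u^k\|_{L^2}$ and the div--curl/Hardy--BMO duality \eqref{ineq-000} for the pressure terms, which closes a Gronwall inequality in $\|\sqrt{\rho^k}\dot u^k\|_{L^2}$ alone. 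As written, your Step 3 would stall exactly where the paper warns it must.

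Second, your argument for \eqref{es-mass} is circular. You propose to bound the flux $\int\rho^k(K_{\eps_k}\star u^k)\cdot\nabla\chi_R$ using the weighted velocity estimate of Proposition \ref{Le-Li-Xin} and the propagation of $\bar x^\alpha\rho^k$; but Proposition \ref{Le-Li-Xin} applied to $u^k$ requires as hypothesis a mass lower bound $M\leq\int_{B_R}\rho^k$, i.e.\ \eqref{es-mass} itself, and the bound \eqref{es-locD} on $\bar x^\alpha\rho^k$ (Proposition \ref{Prop-loc}) is likewise proved \emph{after} and \emph{using} \eqref{es-mass}. The correct argument, as in \cite[(3.24)]{LSZ}, is much simpler and needs no weights: by Cauchy--Schwarz, $\bigl|\int\rho^k\,w\cdot\nabla\chi_R\bigr|\leq CR^{-1}\|\rho^k\|_{L^1}^{1/2}\,\|\sqrt{\rho^k}\,w\|_{L^2}$ with $w$ the transport velocity, so the mass loss on $[0,T]$ is at most $CT/R$ by conservation of $\|\rho^k\|_{L^1}$ and the energy inequality, and one chooses $N_1$ accordingly. (One residual point, which the paper also leaves implicit, is that in \eqref{INS-D} the transport velocity is $K_{\eps_k}\star u^k$, so one should check that the LSZ identities and this flux bound survive the mollification; this deserves a sentence but is not where your difficulty lies.) Finally, a minor point: the maximum principle gives $\rho^k\leq\|\rho_0^{\eps_k,n_k}\|_{L^\infty}\leq 2\rho_*$ rather than $\rho_*$, because of the added term $\tfrac1n e^{-|x|^2}$ in the approximate density.
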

 
 Remark that \eqref{Le-2DUes1} is a combination of estimates \cite[(3.4), Lemma 3.2 and Lemma 3.3]{LSZ}, \eqref{Le-2DUes222} follows from \cite[Lemma 3.3]{LSZ} and \eqref{es-mass} follows from \cite[(3.24)]{LSZ}.

 At this stage, we want to shift the regularity in order to get  $L^1_tLip_x$ estimate for the velocity. To achieve it, we first need to show the following
 spatial-weighted estimates for the velocity and the density.
 \begin{prop}[weighted estimates for the velocity and the density]\label{Prop-loc}
For any    $T>0,$  there exists a positive constant $C_{0, T}$ depending only  on   $N_0$, $T$ and on $\mathcal A_0$ defined by \eqref{e.defA_0} 
such that
  for all $a\in(2, \infty)$ and $b\in (0, 1]$
   \begin{equation}\label{es-Wvelocity}
\sup_{t\in [0, T]} \|\bar x^{-b} u^{k}(t, \cdot)\|_{ L^{\frac{a}{b}}(B_{n_k})}\leq   C_{0, T},
 \end{equation}
  \begin{align}
\sup_{t\in [0, T]} \|\bar x^{-1} u^{k}(t, \cdot)\|_{ L^2(B_{n_k})}&\leq   C_{0, T},\label{es-locL2}\\
 \|\bar x^{-b} u^{k}\|_{L^{2+b}(0, T;  L^{\infty}(B_{n_k}))}&\leq  C_{0, T}.\label{es-locLinfty}
 \end{align}
Moreover for $\alpha>1$, for all $p\in[1, \infty]$,
  \begin{equation}\label{es-locD}
\sup_{t\in [0, T]} \|\bar x^{\alpha} \rho^{k}(t, \cdot)\|_{L^p(B_{n_k})}\leq C_{0, T}.
 \end{equation}
 \end{prop}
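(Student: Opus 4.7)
The plan is to combine the uniform bounds of Proposition~\ref{Le-2DUes} with a weighted interpolation inequality adapted to the 2D far-field vacuum setting, and to close the estimates on the density via a Gronwall argument on the continuity equation. The key external ingredient is a weighted Sobolev--Hardy inequality of Li--Liang--Xin type (to be stated as Proposition~\ref{Le-Li-Xin} in Appendix~\ref{A}) of the form
\begin{equation*}
\|\bar x^{-b}v\|_{L^{a/b}(\R^2)}\leq C\bigl(\|\sqrt{\rho^k}v\|_{L^2(\R^2)}+\|\nabla v\|_{L^2(\R^2)}\bigr),\qquad a\in(2,\infty),\ b\in(0,1],
\end{equation*}
together with its $L^2$ endpoint corresponding to \eqref{es-locL2}, valid under \eqref{es-mass} with a constant $C$ depending on $a$, $b$, $N_1$ and on $\|\bar x^{\alpha}\rho^k\|_{L^1(\R^2)\cap L^\infty(\R^2)}$. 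Applied pointwise in time to $v=u^k(t,\cdot)$ and combined with \eqref{Le-2DUes1}, this gives \eqref{es-locL2} and \eqref{es-Wvelocity} as soon as the weighted density norm entering $C$ is controlled uniformly in $k$ and $t\in[0,T]$.

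For \eqref{es-locD}, I use the transport equation $\partial_t\rho^k+(K_{\eps_k}\star u^k)\cdot\nabla\rho^k=0$: setting $w^k:=\bar x^\alpha\rho^k$, the incompressibility of $K_{\eps_k}\star u^k$ together with $|\nabla\bar x|/\bar x\leq C/\langle x\rangle\leq C\bar x^{-b}$ on $\R^2$ for any $b\in(0,1)$ yields, for any $p\in[1,\infty]$,
\begin{equation*}
\|w^k(t)\|_{L^p(\R^2)}\leq \|\bar x^\alpha\rho_0^k\|_{L^p(\R^2)}\exp\!\left(C\int_0^t\|\bar x^{-b}(K_{\eps_k}\star u^k)\|_{L^\infty(\R^2)}\,ds\right),
\end{equation*}
so \eqref{es-locD} follows from \eqref{es-locLinfty} and standard mollification estimates. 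The circular dependence between the weighted velocity and density estimates is closed via a continuation/bootstrap argument on $[0,T]$: on a short interval $[0,t_0]$ the weighted density norm stays close to its initial (finite) value, which controls the constant in the weighted interpolation inequality, which in turn controls the velocity estimates and hence $d/dt\|w^k\|_{L^p}$. Finally, \eqref{es-locLinfty} is obtained by interpolating the $L^\infty_t$ weighted $L^{a/b}_x$ bound \eqref{es-Wvelocity} with the $L^2_tL^2_x$ control on $\nabla^2 u^k$ coming from \eqref{Le-2DUes1}, through a weighted Gagliardo--Nirenberg inequality
\begin{equation*}
\|\bar x^{-b}v\|_{L^\infty(\R^2)}\leq C\|\bar x^{-b}v\|_{L^{a/b}(\R^2)}^{1-\theta}\bigl(\|\bar x^{-b}v\|_{L^2(\R^2)}+\|\nabla^2v\|_{L^2(\R^2)}\bigr)^{\theta}
\end{equation*}
with $\theta=2/(2+b)$, so that raising to the power $2+b$ and integrating in time produces exactly the $L^{2+b}_tL^\infty_x$ bound.

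The main obstacle, and the main novelty relative to the cases \eqref{cond1}--\eqref{cond2} treated in Subsection~\ref{subsec.cond12}, is the weighted interpolation inequality itself: in the far-field vacuum setting one cannot hope for a bound of $\|u\|_{L^2(\R^2)}$ in terms of $\|\sqrt{\rho}u\|_{L^2}+\|\nabla u\|_{L^2}$ as in Proposition~\ref{Prop-intp}, and one has to pay a logarithmic weight at infinity (explaining the precise choice $\bar x=\langle x\rangle(\ln\langle x\rangle)^2$). Such an inequality is typically proved by combining a Poincar\'e-type bound on the reference ball $B_{N_1}$, where \eqref{es-mass} gives $\int_{B_{N_1}}\rho^k\,dx\geq 1/4$ uniformly in $k$, with a weighted Hardy-type estimate on its complement. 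Once this 2D weighted functional inequality is granted, the remainder of the proof is a standard combination of the uniform energy estimates of Proposition~\ref{Le-2DUes}, a Gronwall argument for the density transport equation, and interpolation to upgrade the weighted $L^{a/b}_x$ bound to an $L^\infty_x$ bound.
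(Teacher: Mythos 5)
Your overall architecture (weighted Li--Xin inequality for the velocity, Gronwall on the transport equation for $\bar x^\alpha\rho^k$, interpolation to reach $L^{2+b}_tL^\infty_x$) is the same as the paper's, but two steps as you state them do not go through. First, you misstate the key weighted inequality: in Proposition \ref{Le-Li-Xin} the constant depends only on the exponents, on $\rho_*$, and on the mass lower bound $M=\tfrac14$ on the ball $B_{N_1}$ coming from \eqref{es-mass} --- it does \emph{not} depend on $\|\bar x^\alpha\rho^k\|_{L^1\cap L^\infty}$. The dependence you introduce creates a spurious coupling between the velocity bounds \eqref{es-Wvelocity}--\eqref{es-locLinfty} and the density bound \eqref{es-locD}, which you then try to break with a continuation/bootstrap. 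As described, that bootstrap is incomplete: the Gronwall bound for $\|\bar x^\alpha\rho^k\|_{L^p}$ is exponential in the weighted velocity norm, whose constant (in your version) grows with the density norm, so the continuity argument only guarantees a time interval whose length depends on that constant; without quantifying the growth of $C$ in $\|\bar x^\alpha\rho^k\|_{L^1\cap L^\infty}$ you cannot exclude that the self-consistent bound degenerates before an arbitrary prescribed $T$. The fix is simply to use the lemma in its correct form, as the paper does: \eqref{es-locL2} and \eqref{es-Wvelocity} follow directly from \eqref{e.estprop14} (with $\eta=\rho^k$, using \eqref{es-mass}, the bound $(\ln\langle x\rangle)^\gamma\leq C\langle x\rangle$ and \eqref{Le-2DUes1}), uniformly in $k$ and $t$, and only \emph{then} does one run Gronwall on $\partial_t(\bar x^\alpha\rho^k)+u^k\cdot\nabla(\bar x^\alpha\rho^k)=\alpha(\bar x^\alpha\rho^k)u^k\cdot\nabla\ln\bar x$ to get \eqref{es-locD}; no circularity and no bootstrap are needed.

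Second, the weighted Gagliardo--Nirenberg inequality you invoke for \eqref{es-locLinfty}, namely $\|\bar x^{-b}v\|_{L^\infty}\leq C\|\bar x^{-b}v\|_{L^{a/b}}^{1-\theta}(\|\bar x^{-b}v\|_{L^2}+\|\nabla^2 v\|_{L^2})^\theta$ with $\theta=\tfrac{2}{2+b}$, is not scaling-consistent: localizing $v$ in a region where $\bar x$ is essentially constant and rescaling forces $\theta=\tfrac{2}{q+2}$ with $q=a/b>2$, i.e. $\theta=\tfrac{2b}{a+2b}$, which differs from your choice (you chose $\theta$ to produce the time exponent $2+b$, not to satisfy the inequality). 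The conclusion is still reachable with the correct exponent, since then $\theta(2+b)<2$ and the second factor remains time-integrable on $[0,T]$ by H\"older; alternatively, and more simply, argue as the paper does: by \eqref{GN-in}, $\|\bar x^{-b}u^k\|_{L^\infty}\leq C(\|\bar x^{-b}u^k\|_{L^{4/b}}+\|\nabla u^k\|_{L^{4/b}})$, bound the first term by \eqref{es-Wvelocity} and interpolate $\|\nabla u^k\|_{L^{4/b}}\leq C\|\nabla u^k\|_{L^2}^{b/2}\|\nabla^2u^k\|_{L^2}^{1-b/2}$, which lies in $L^{4/(2-b)}_t\subset L^{2+b}_t(0,T)$ thanks to \eqref{Le-2DUes1}. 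With these two corrections your argument coincides with the paper's proof.
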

 \begin{proof}
At first, we prove inequality  \eqref{es-locL2}.   It can be derived directly  from classical weighted inequalities and the uniform estimates in Proposition \ref{Le-2DUes}. Indeed, estimate \eqref{e.estprop14} with inequality \eqref{es-mass} and  \eqref{Le-2DUes1} implies that    there exists a positive constant $C$ depending only on   $\rho_*, m, \ell, N_0, T$ and norms $\|\rho_0\|_{L^1(\R^2)}, \|\sqrt{\rho_0}u_0\|_{L^2(\R^2)}$   such that
 \begin{align} \label{es-locL21}
\left(\int_{B_{n_k}} \frac{|u^{k}(t, x)|^m}{ {\langle x \rangle}^2}\, (\ln \langle x \rangle )^{-\ell}\,dx\right)^{1/m} &\leq C (\|\sqrt{\rho^{k}} u^{k}\|_{L^2(B_{n_k})}+\|\nabla u^{k}\|_{L^2(B_{n_k})})\\
 &\leq C (\|\sqrt{\rho_0} u_0\|_{L^2(\R^2)}+\|\nabla u_0\|_{L^2(\R^2)}),\notag
\end{align}
 for all $m\in [2, \infty)$ and $\ell\in (1+\frac{m}{2}, \infty).$
Recalling that  $\bar x=\langle x\rangle (\ln\langle x\rangle)^2$ and taking  $m=2, \ell=4$ in inequality \eqref{es-locL21}, we  easily get  \eqref{es-locL2}.

Taking  $m=4, \ell=4$ in inequality \eqref{es-locL21}, we   find that
\begin{align}\label{es-L4}
 \| \langle x\rangle^{-1} u^{k}(t, \cdot)\|_{L^4(B_{n_k})}&\leq \left(\int_{B_{n_k}} | u^{k} (t, x) |^4 \langle x\rangle^{-2}  (\ln  \langle x\rangle)^{-4} \,dx\right)^\frac{1}{4}\\
 &\leq    C  (\|\sqrt{\rho_0} u_0\|_{L^2(\R^2)}+\|\nabla u_0\|_{L^2(\R^2)}),\notag
\end{align}
 where we used  the fact that
 \begin{align}\label{es-wf1}
 (\ln\langle x\rangle)^{\gamma}\leq C(\gamma) \langle x\rangle,\quad {\rm for~any}~~\gamma\in[1, \infty),~~x\in \R^2.
 \end{align}
The proof of \eqref{es-Wvelocity} be follows similarly by using \eqref{es-locL21} and \eqref{es-wf1} again.

To prove \eqref{es-locLinfty}, we use the Gagliardo-Nirenberg inequality  \eqref{GN-in} and Young's inequality to write that
 \begin{align*}
\| \bar{x}^{-b} {u}^{k} (t, \cdot) \|_{L^\infty(B_{n_k})}&\leq C(\| \bar{x}^{-b} {u}^{k} (t, \cdot) \|_{L^\frac{a}{b}(B_{n_k})} +  \| \nabla(\bar{x}^{-b} {u}^{k} (t, \cdot) )\|_{L^\frac{a}{b}(B_{n_k})})\\
&\leq C  (\| \bar{x}^{-b} {u}^{k} (t, \cdot) \|_{L^\frac{a}{b}(B_{n_k})} +  \| \nabla  {u}^{k} (t, \cdot) \|_{L^\frac{a}{b}(B_{n_k})}).
\end{align*}
 Thus, taking $a=4$, one then has \eqref{es-locLinfty} thanks to inequality \eqref{es-Wvelocity} and the uniform estimate \eqref{Le-2DUes1}.

Now, we can  prove \eqref{es-locD}.  Noticing that  $\bar{x}^\alpha  \rho^{k} $ satisfies
 \begin{equation*}
 \partial_t (\bar{x}^\alpha \rho^{k}) +u^{k}\cdot\nabla(\bar x^\alpha \rho^{k}) =\alpha  (\bar x^\alpha \rho^{k}) u^{k}\cdot\nabla \ln\bar x,
\end{equation*}
 then   the standard $L^p$ estimate for the transport equation gives that
 \begin{align*}
 \|\bar{x}^\alpha \rho^{k}\|_{L^p(B_{n_k})}^p\leq C \|\bar{x}^\alpha \rho_0\|_{L^p(\R^2)}^p \exp( \|  u^{k}\cdot\nabla \ln\bar x\|_{L^1(0, T; L^\infty(B_{n_k}))} ).
 \end{align*}
 Using the fact that
 \begin{align}\label{es-wf2}
 |\nabla\bar x|\leq C  (\ln\langle x\rangle)^{2},\quad  |\nabla\langle x\rangle| \leq C, \quad {\rm for~any}~~x\in \R^2,
 \end{align}
together with   inequalities \eqref{GN-in}, \eqref{Le-2DUes1}, \eqref{es-L4}, one obtains
 \begin{align}
 \int_0^T\|  u^{k}\cdot\nabla \ln\bar x\|_{ L^\infty(B_{n_k})}\,dt&\leq  C \int_0^T\|   \langle x\rangle^{-1} u^{k} \|_{ L^\infty(B_{n_k})}\,dt\label{es-locLinfty1}\\
 &\leq C\int_0^T\|   \langle x\rangle^{-1} u^{k} \|_{ L^4(B_{n_k})}^\frac{1}{2}  \|\nabla(\langle x\rangle^{-1} u^{k} )\|_{ L^4(B_{n_k})}^\frac{1}{2}\,dt\notag\\
  &\leq C\int_0^T\|   \langle x\rangle^{-1} u^{k} \|_{ L^4(B_{n_k})} \|\nabla u^{k} \|_{ L^4(B_{n_k})}^\frac{1}{2}\,dt\notag\\
 &\leq C  (\|\sqrt{\rho_0} u_0\|_{L^2(\R^2)}+\|\nabla u_0\|_{L^2(\R^2)})^\frac{3}{2}\,T^\frac{3}{4}.\notag
 \end{align}
 This finishes the proof of Proposition \ref{Prop-loc}.
 \end{proof}

\subsubsection{Shift of regularity}

\begin{prop}[improved higher-order estimates]\label{Prop-shift22d2}
For any    $T>0$ and  for all $r\in[2, \infty)$ and $q\in[2, \frac{2r}{r-2}),$  we have
   \begin{align}\label{es-shift-2d00}
\|\nabla^2(\sqrt{t} u^{k})\|_{L^q(0, T; L^r(B_{n_k}))}+\|\nabla(\sqrt{t}P^{k})\|_{L^q(0, T; L^r(B_{n_k}))}\leq C_{0, T}
\end{align}
and
 \begin{align}\label{es-shift-0}
\sup_{t\in[0, T]}\left(\|\nabla^2(\sqrt{t} u^{k})\|_{ L^2(B_{n_k})}+\|\nabla(\sqrt{t}P^{k})\|_{ L^2(B_{n_k})}\right)\leq C_{0, T},
\end{align}
where $C_{0,T}$ depends only    on   $N_0$, $T$ and on $\mathcal A_0$ defined by \eqref{e.defA_0}. 
Moreover, we have the following key Lipschitz estimate
\begin{align*}
\int_0^T \|\nabla u^{k}(t, \cdot)\|_{L^\infty(B_{n_k})}\,dt\leq  C_{0, T} T^\frac{1}{3}
 \end{align*}
and $C_{0,T} T^\frac{1}{3}\to 0$ as $T\to 0.$
\end{prop}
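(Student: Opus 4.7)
The plan is to mirror the strategy of Proposition~\ref{Prop-shift22d}: recast the momentum equation as the stationary Stokes system~\eqref{stokes-timeweighted} for $\sqrt{t}u^{k}$ on $B_{n_{k}}$ with no-slip boundary conditions, and apply standard $L^{r}$ maximal regularity. Writing $\sqrt{t}\,\rho^{k}\dot u^{k}=\sqrt{\rho^{k}}\,\sqrt{\rho^{k}t}\dot u^{k}$ and using $\rho^{k}\leq\rho_{*}$, this yields
$$
\|\nabla^{2}(\sqrt{t}u^{k})\|_{L^{r}(B_{n_{k}})}+\|\nabla(\sqrt{t}P^{k})\|_{L^{r}(B_{n_{k}})}\leq C(r)\sqrt{\rho_{*}}\,\|\sqrt{\rho^{k}t}\dot u^{k}\|_{L^{r}(B_{n_{k}})},
$$
which, specialized to $r=2$ and combined with the convective-derivative bound~\eqref{Le-2DUes222}, gives~\eqref{es-shift-0} immediately.

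The core task is the $L^{q}_{t}L^{r}_{x}$ bound
$$
\|\sqrt{\rho^{k}t}\dot u^{k}\|_{L^{q}(0,T;L^{r}(B_{n_{k}}))}\leq C_{0,T},\qquad r\in[2,\infty),\ q\in[2,2r/(r-2)),
$$
from which~\eqref{es-shift-2d00} will follow. In the (aNV)/(VB) regime of Proposition~\ref{Prop-shift22d} this was obtained through the unweighted interpolation of Proposition~\ref{Prop-intp}, which is unavailable in the present far-field vacuum setting; overcoming this is the main obstacle. To circumvent it, the plan is to invoke the weighted Sobolev-type inequality of Proposition~\ref{Le-Li-Xin}, which provides, for suitable $b\in(0,1]$ and $m\in[2,\infty)$,
$$
\|\bar x^{-b}\sqrt{t}\dot u^{k}\|_{L^{m}(B_{n_{k}})}\leq C\bigl(\|\sqrt{\rho^{k}t}\dot u^{k}\|_{L^{2}(B_{n_{k}})}+\|\sqrt{t}\nabla\dot u^{k}\|_{L^{2}(B_{n_{k}})}\bigr),
$$
and then to absorb the spatial weight via the density decay~\eqref{es-locD}. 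Since $\alpha>1$ in~\eqref{condition3'}, one may choose $b\in(0,1]$ with $b\leq\alpha/2$, so that
$$
\sqrt{\rho^{k}}\,\bar x^{b}=\bigl(\bar x^{\alpha}\rho^{k}\bigr)^{1/2}\bar x^{b-\alpha/2}
$$
is uniformly bounded on $(0,T)\times B_{n_{k}}$ in $k$. Combining the pointwise identity $\sqrt{\rho^{k}t}\dot u^{k}=(\sqrt{\rho^{k}}\bar x^{b})(\bar x^{-b}\sqrt{t}\dot u^{k})$ with the two bounds above and with~\eqref{Le-2DUes222} delivers an $L^{2}_{t}L^{m}_{x}$ estimate on $\sqrt{\rho^{k}t}\dot u^{k}$ for every finite $m$. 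Interpolating this with the $L^{\infty}_{t}L^{2}_{x}$ bound of~\eqref{Le-2DUes222} produces the required $L^{q}_{t}L^{r}_{x}$ estimate, hence~\eqref{es-shift-2d00}.

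Finally, the Lipschitz bound is obtained exactly as in Proposition~\ref{Prop-shift22d}: specializing the preceding step to $r=4$, $q=2$ and applying the Gagliardo-Nirenberg inequality,
$$
\int_{0}^{T}\|\nabla u^{k}\|_{L^{\infty}(B_{n_{k}})}\,dt\leq \|\nabla u^{k}\|_{L^{\infty}_{t}L^{2}_{x}}^{1/3}\|\nabla^{2}(\sqrt{t}u^{k})\|_{L^{2}(0,T;L^{4}(B_{n_{k}}))}^{2/3}\|t^{-1/3}\|_{L^{3/2}(0,T)}\leq C_{0,T}T^{1/3},
$$
and smallness as $T\to 0^{+}$ follows by inspection of the $T$-dependence in Propositions~\ref{Le-2DUes} and~\ref{Prop-loc}, all of whose constants remain bounded as $T\to 0^{+}$.
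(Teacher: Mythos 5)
Your proposal is correct and follows essentially the same route as the paper: Stokes maximal regularity for \eqref{stokes-timeweighted}, the weighted inequality of Proposition \ref{Le-Li-Xin} with the spatial weight absorbed by the density decay \eqref{es-locD}, interpolation with the time-weighted bounds \eqref{Le-2DUes222}, and the same Gagliardo--Nirenberg computation for the Lipschitz estimate. The only (cosmetic) difference is that you pull out $\sqrt{\rho_*}$ and absorb the weight with $\sqrt{\rho^{k}}\bar x^{b}$, $b\leq\min(1,\alpha/2)$, instead of keeping the full factor $\bar x^{\alpha}\rho^{k}$ as the paper does; this makes the direct weighted bound available only for $m$ with $bm>2$ rather than all $m\in[2,\infty)$, but since the final interpolation only requires arbitrarily large $m$ (intermediate exponents follow by Lebesgue interpolation with the $L^{2}$ bound), the argument goes through unchanged.
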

\begin{proof}
 Applying the standard $L^p$ estimates for  Stokes problem \eqref{stokes-timeweighted},  we have for all finite $p,$
\begin{align}\label{shift-es2d111}
\|\nabla^2(\sqrt{t} u^{k})\|_{ L^p(B_{n_k})}+\|\nabla(\sqrt{t}P^{k})\|_{L^p(B_{n_k})}\leq C(p) \| \sqrt{  t} \rho \dot{u}^{k}\|_{L^p(B_{n_k})}.
\end{align}
Obviously, inequality \eqref{Le-2DUes222} yields \eqref{es-shift-0} directly.

Similarly to the proof of inequality \eqref{es-locL21}, applying Proposition \ref{Le-Li-Xin}  to $\dot{u}$ implies that  for all $r\in[2, \infty)$   (taking $\ell=2r$)
 \begin{align*} 
\left(\int_{B_{n_k}} \frac{|\sqrt{t}\dot{u}^{k}(t, x)|^r}{ {\langle x \rangle}^2}\, (\ln \langle x \rangle )^{-2r}\,dx\right)^{1/r} &\leq C (\|\sqrt{\rho^{k} t} \dot{u}^{k}\|_{L^2(B_{n_k})}+\|\sqrt{t}\nabla \dot{u}^{k}\|_{L^2(B_{n_k})}).
\end{align*}
Thus by using $\alpha>1,$  one has
 \begin{align*}
 \|\sqrt{ t} \rho \dot{u}^{k}\|_{L^r(B_{n_k})} =\ & \left(\int_{B_{n_k}}   (\bar x^\alpha\rho^{k})^r (\langle x\rangle^{2-\alpha r} (\ln\langle x\rangle)^{2r-2\alpha r}   |\sqrt{ t}\dot{u}^{k}|^r\langle x\rangle^{-2}(\ln\langle x\rangle)^{-2r}\,dx\right)^{1/r}\\
 \leq\ &  C \|\bar x^\alpha\rho^{k}\|_{L^\infty}   (\|\sqrt{\rho^{k} t} \dot{u}^{k}\|_{L^2(B_{n_k})}+\|\sqrt{t}\nabla \dot{u}^{k}\|_{L^2(B_{n_k})}),
 \end{align*}
which together with inequalities \eqref{es-locD}, \eqref{Le-2DUes222} and \eqref{shift-es2d111} yields \eqref{es-shift-2d00} in the case $q=2$.
Further  interpolating with  \eqref{Le-2DUes222} this finally implies \eqref{es-shift-2d00}.

 To prove the Lipschitz estimate,  we write by  inequalities \eqref{GN-in},  \eqref{Le-2DUes1} and \eqref{es-shift-2d00} that
 \begin{align*}
\int_0^T\|\nabla u^{k}\|_{L^\infty(B_{n_k})}\,dt\leq& \int_0^T \|\nabla u^{k}\|_{L^2(B_{n_k})}^\frac{1}{3} \|\nabla^2 (\sqrt{t} u^{k})\|_{L^4(B_{n_k})}^\frac{2}{3}\, t^{-\frac{1}{3}}\,dt\\
 \leq &  \|\nabla u^{k}\|_{L^\infty(0, T; L^2(B_{n_k}))}^\frac{1}{3}   \|\nabla^2 (\sqrt{t} u^{k})\|_{L^2(0, T; L^4(B_{n_k}))}^\frac{2}{3}  \|t^{-\frac{1}{3}}\|_{L^{\frac{3}{2}}(0, T)}\\
 \leq & C_{0, T}\, T^{\frac{1}{3}},
 \end{align*}
 where $C_{0,T}$ remains bounded when $T\rightarrow 0$.
\end{proof}

\subsubsection{Weighted $L^\infty$ estimates}

In order to prove uniqueness, we need the following improved  weighted-estimate for the velocity, with the help of initial condition \eqref{cond-velocity}.

\begin{prop}[almost boundedness of the velocity]\label{Prop-2dW}
For any  $T>0,$ there exits a constant $C_{\#}$ depending only on the norms $\|g\|_{L^2(\R^2)},  \|\nabla u_0\|_{L^2(\R^2)}, \|\Delta u_0\|_{L^2(\R^2)}$ and on $C_{0, T}$ (defined in Proposition \ref{Prop-shift22d2}) such that for any $b\in (0, 1]$,
\begin{equation}\label{es-2dW1}
\sup_{t\in[0, T]}\| \bar{x}^{-b} {u}^{k} (t, \cdot) \|_{L^\infty(B_{n_k})}\leq C_{\#}.
\end{equation}
\end{prop}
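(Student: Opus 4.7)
The plan is to promote the weighted $L^{a/b}$ bound \eqref{es-Wvelocity} on $u^{k}$ to a weighted $L^\infty$ bound uniform in $t\in[0,T]$. The Sobolev embedding $W^{1,p}(\R^2)\hookrightarrow L^\infty(\R^2)$ for $p>2$, applied to $\bar{x}^{-b}u^{k}$, reduces this task to controlling $\|\nabla u^{k}(t,\cdot)\|_{L^p}$ uniformly in $t$. By Gagliardo-Nirenberg \eqref{GN-in} combined with \eqref{Le-2DUes1}, this in turn follows from a uniform-in-$t$ bound on $\|\nabla^2 u^{k}(t,\cdot)\|_{L^2}$, which by maximal regularity for the stationary Stokes system $-\Delta u^{k}+\nabla P^{k}=-\rho^{k}\dot u^{k}$ reduces to a uniform-in-$t$ bound on $\|\sqrt{\rho^{k}}\dot u^{k}(t,\cdot)\|_{L^2}$. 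The time-weighted estimate \eqref{Le-2DUes222} is not good enough near $t=0$, so the compatibility condition \eqref{cond-velocity} must be invoked to control the initial value.

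The essential new ingredient is: from the momentum equation at $t=0^+$ combined with \eqref{cond-velocity-approx}, $\rho_0^{k}\dot u^{k}|_{t=0}=\Delta u_0^{k}-\nabla P_0^{k}=-\sqrt{\rho_0^{k}}g^{k}$, whence $\|\sqrt{\rho_0^{k}}\dot u^{k}|_{t=0}\|_{L^2}=\|g^{k}\|_{L^2}\leq\|g\|_{L^2(\R^2)}$ uniformly in $k$. With this as the starting value, I redo the energy computation of Proposition \ref{Prop-timees2d} \emph{without} the factor $\sqrt{t}$. The same type of terms $A_2,A_3,A_4$ appear (the $A_1$-source disappears as it came from $\partial_t(t)$). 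Bounding the coefficients via \eqref{Le-2DUes1}, \eqref{es-locLinfty}, and Proposition \ref{Prop-loc}, I obtain a differential inequality
$$\frac{d}{dt}\|\sqrt{\rho^{k}}\dot u^{k}\|_{L^2}^2+\|\nabla\dot u^{k}\|_{L^2}^2\leq \tilde B(t)\bigl(\|\sqrt{\rho^{k}}\dot u^{k}\|_{L^2}^2+1\bigr),$$
with $\tilde B\in L^1(0,T)$ bounded uniformly in $k$. Gr\"onwall then yields $\sqrt{\rho^{k}}\dot u^{k}\in L^\infty(0,T;L^2(B_{n_k}))$ and $\nabla\dot u^{k}\in L^2((0,T)\times B_{n_k})$, uniformly.

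Back-substitution into the Stokes estimate gives the uniform bound $\|\nabla^2 u^k(t,\cdot)\|_{L^2}\leq C_\#$, and Gagliardo-Nirenberg upgrades this to $\|\nabla u^k(t,\cdot)\|_{L^p}\leq C_\#$ for every $p\in[2,\infty)$. Finally, for some fixed $p>2$,
$$\|\bar x^{-b}u^{k}(t,\cdot)\|_{L^\infty}\leq C_p\bigl(\|\bar x^{-b}u^{k}(t,\cdot)\|_{L^p}+\|\nabla(\bar x^{-b}u^{k})(t,\cdot)\|_{L^p}\bigr).$$
The first term is bounded by \eqref{es-Wvelocity}; for the second, the decomposition $\nabla(\bar x^{-b}u^{k})=-b\bar x^{-b-1}\nabla\bar x\otimes u^{k}+\bar x^{-b}\nabla u^{k}$ and \eqref{es-wf2} reduce the first piece to a slightly heavier weighted $L^p$ bound on $u^k$ covered by \eqref{es-Wvelocity}, and the second piece is dominated by $\|\nabla u^k\|_{L^p}$ just obtained.

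The main obstacle lies in the time-weight-free energy estimate of Step 2: the nonlinear terms of the type $\langle\rho^k\dot u^k\cdot\nabla u^k,\dot u^k\rangle$ must be absorbed without the $\sqrt{T}$-smallness exploited in Proposition \ref{Prop-timees2d}, which is delicate in the far-field-vacuum regime where $\dot u^k$ itself is not in $L^2$. The fix is to use the weighted interpolation inequality of Proposition \ref{Le-Li-Xin} applied to $\dot u^k$ in place of the unweighted Proposition \ref{Prop-intp}, together with the uniform $L^\infty_t L^2_x$ bound on $\nabla u^k$ from \eqref{Le-2DUes1} and the time-integrability of $\|\bar x^{-b}u^k\|_{L^\infty}$ from \eqref{es-locLinfty}.
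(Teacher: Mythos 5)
Your skeleton (reduce \eqref{es-2dW1} via Gagliardo--Nirenberg to a uniform-in-time bound on $\|\nabla u^{k}\|_{L^4}$, hence via Stokes maximal regularity to a uniform bound on $\|\sqrt{\rho^{k}}\dot u^{k}(t)\|_{L^2}$, with the compatibility condition controlling $t\to 0^+$) is the same as the paper's, but two steps are genuinely flawed. First, the identity $\rho_0^{k}\dot u^{k}|_{t=0}=\Delta u_0^{k}-\nabla P_0^{k}$ is false: the momentum equation gives $\rho_0^{k}\dot u^{k}|_{t=0}=\Delta u_0^{k}-\nabla P^{k}(0)$, and the initial pressure $P^{k}(0)$ of the evolution is \emph{not} the $P_0^{k}$ of \eqref{cond-velocity-approx}, since $\nabla\cdot\dot u^{k}=\nabla\cdot(u^{k}\cdot\nabla u^{k})\neq 0$ and the convective term contributes to the pressure. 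Hence $\|\sqrt{\rho_0^{k}}\dot u^{k}|_{t=0}\|_{L^2}$ is not simply $\|g^{k}\|_{L^2}$. The paper instead writes $\|\sqrt{\rho^k}\dot u^{k}(\tau)\|_{L^2}^2=\langle\Delta u^{k}-\nabla P^{k},\dot u^{k}\rangle$, splits off $\nabla P_0^{k}$, and controls $\langle P^{k}-P_0^{k},\partial_j u^{k}\cdot\nabla (u^{k})^j\rangle$ via the BMO--Hardy duality \eqref{ineq-000} together with the uniform bounds \eqref{e.estP_0epsn}--\eqref{e.estnabla^2uepsn}; the resulting $C_{\#}$ necessarily contains the extra terms $C_{0,T}\|\nabla u_0\|_{L^2}\|\nabla^2 u_0\|_{L^2}+\|\nabla u_0\|_{L^2}^4$, not just $\|g\|_{L^2}^2$.

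Second, your claimed linear differential inequality for $\|\sqrt{\rho^{k}}\dot u^{k}\|_{L^2}^2$ with an $L^1$-in-time coefficient is unsubstantiated, and this is where the real difficulty sits. If you run the energy estimate on the $\dot u^{k}$-equation (which is what the Stokes step requires), the pressure does \emph{not} drop out when testing against $\dot u^{k}$, because $\nabla\cdot\dot u^{k}\neq 0$; one is forced into the structure of \cite[Lemma 3.3]{LSZ}, i.e. \eqref{es-2dW00}, where the propagated quantity is $\|\sqrt{\rho^k}\dot u^{k}\|_{L^2}^2-2\sum_j\langle P^{k},\partial_j u^{k}\cdot\nabla (u^{k})^j\rangle$ and the right-hand side is \emph{quartic} in $\|\sqrt{\rho^k}\dot u^{k}\|_{L^2}$; such an inequality only yields control on a short interval $(0,T_{\#}]$, and the paper then patches with the time-weighted estimate \eqref{es-shift-0} on $[T_{\#},T]$ to get $\sup_{[0,T]}\|\nabla^2 u^{k}\|_{L^2}\leq C(C_\#+C_{0,T})$ --- a step absent from your argument. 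If instead you literally redo Proposition \ref{Prop-timees2d} for $u^{k}_t$ without the factor $\sqrt t$ so as to kill the pressure, then in the far-field regime you lack $\|u^{k}\|_{L^\infty}$ and any unweighted control of $u^{k}_t$, and the weighted substitute you invoke does not give an absorbable term: bounding the convection term by Proposition \ref{Le-Li-Xin} applied to $\dot u^{k}$ (or $u^k_t$) produces $(\|\sqrt{\rho^k}\dot u^{k}\|_{L^2}+\|\nabla\dot u^{k}\|_{L^2})^2\,\|\nabla u^{k}\|_{L^2}$, whose dissipative part carries the merely bounded, non-small factor $\|\nabla u^{k}\|_{L^2}$ and cannot be absorbed by the left-hand side. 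So, as written, the Gr\"onwall step fails; you need either the quartic inequality of \cite{LSZ} plus the local-time/patching argument, or a genuinely new estimate that the proposal does not supply. The final Sobolev/weight bookkeeping in your last step is fine and matches the paper.
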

\begin{proof}
 At first, using the Gagliardo-Nirenberg inequality \eqref{GN-in} and Young's inequality, one has for any $a\in(2, \infty)$, 
\begin{align}\label{e.barx-bu}
\| \bar{x}^{-b} {u}^{k} (t, \cdot) \|_{L^\infty(B_{n_k})}
&\leq C(\| \bar{x}^{-b} {u}^{k} (t, \cdot) \|_{L^\frac{a}{b}(B_{n_k})} +\|\nabla u^{k}(t,\cdot)\|_{L^\frac{a}{b}(B_{n_k})}).
\end{align}
Let us bound the right-hand-side of \eqref{e.barx-bu} for $\frac ab=4$. The bound for the first term follows simply from \eqref{es-Wvelocity}.

It remains to obtain  estimate for $\nabla u^{k}$ in $L^\infty(0, T; L^4(B_{n_k}))$. Estimate \eqref{es-shift-0} can be used to obtain an estimate away from initial time. The difficulty is that this estimate degenerates near initial time. Our goal is now to get an estimate near $t=0$. In order to achieve this, we need the compatibility condition \eqref{cond-velocity}. 
  From \cite[Lemma 3.3]{LSZ}, we  know that there exists a  positive constant $C$ depending only on on $\mathcal A_0$ defined by \eqref{e.defA_0} such that
\begin{align}\label{es-2dW00}
\frac{d}{dt}\Big(  \|\sqrt{\rho} \dot{u}^k\|_{L^2(B_{n_k})}^2 -2\sum_{j=1, 2}\langle   P,  \partial_j u^k\cdot\nabla (u^k)^j \rangle \Big)+\|\nabla\dot{u}^k\|_{L^2(B_{n_k})}^2\leq  C  \|\sqrt{\rho} \dot{u}^k\|_{L^2(B_{n_k})}^4.
\end{align}
First, one has for all $0\leq \tau\leq t\leq T$
\begin{align}\label{es-2dW2}
\|\sqrt{\rho} \dot{u}^k(\tau, \cdot)\|_{L^2(B_{n_k})}^2= \langle  \rho \dot{u}^k, \dot{u}^k \rangle =\langle   \Delta u^k, \dot{u}^k\rangle    -\langle   \nabla P^k, \dot{u}^k\rangle
 \end{align}
and 
\begin{align*}
\langle   \Delta u^k, \dot{u}^k\rangle&=\langle  ( \Delta u^k-\nabla P_0^{k})/{\sqrt{\rho}} , \sqrt{\rho^k}\dot{u}^k\rangle +\langle  \nabla P_0^{k},u^k\cdot\nabla u^k\rangle \\
&=\langle  ( \Delta u^k-\nabla P_0^{k})/{\sqrt{\rho^k}} , \sqrt{\rho^k}\dot{u}^k\rangle -\sum_{j=1, 2}\langle   P_0^{k},  \partial_j u^k\cdot\nabla (u^k)^j \rangle,
\end{align*}
 and
 \begin{align*}
 \langle \nabla P^k, \dot{u}^k\rangle=   \langle \nabla P^k, u^k\cdot\nabla u^k\rangle =-\sum_{j=1, 2}\langle   P^k,  \partial_j u^k\cdot\nabla (u^k)^j \rangle.
 \end{align*}
Second, using inequality \eqref{ineq-000} we have
 \begin{align}\label{es-2dW3}
 |\langle   P^k-P_0^k,  \partial_j u^k\cdot\nabla (u^k)^j \rangle| &\leq \|P^k-P_0^k\|_{{\rm BMO}(\R^2)}\|\partial_j u^k\|_{L^2(\R^2)}\|\nabla (u^k)^j\|_{L^2(\R^2)}\\
 & \leq  C(\rho_*)\big(\|\sqrt{\rho_0}g\|_{L^2(\R^2)}+\|\sqrt{\rho^k}\dot{u}^k\|_{L^2(\R^2)}\big)  \|\nabla u^k\|_{L^2(\R^2)}^2,\notag
 \end{align}
 where in the last inequality we used the embedding $\dot{H}^1(\R^2) \hookrightarrow{\rm BMO}(\R^2)$, the standard maximal regularity estimates for the stationary Stokes system applied to \eqref{INS-D} and estimate \eqref{e.estP_0epsn} for $P_0^k$.

 Thus by H\"older's and Young's inequalities and \eqref{es-2dW3},  the identity \eqref{es-2dW2} leads to the following estimate
 \begin{multline*}
 \|\sqrt{\rho^k} \dot{u}^k(\tau, \cdot)\|_{L^2(B_{n_k})}^2\leq C\Big( \|(\Delta u^k-\nabla P_0^{k})/{\sqrt{\rho^k}}\|_{L^2(B_{n_k})}^2\\
 +\| {\sqrt{\rho^k}}u^k\cdot\nabla  u^k\|_{L^2(B_{n_k})}^2+\|\nabla u^k\|_{L^2(B_{n_k})}^4\Big).
 \end{multline*}
 Moreover, using H\"older's inequality again, \eqref{GN-in} and the estimate
  \begin{align*}
 \sup_{t\in[0, T]}\| \sqrt{\rho} u^{k}(t,\cdot)\|_{L^4(\R^2)} &= \left(\int_{\R^2}   (\bar x^\alpha\rho)^2 \langle x\rangle^{2-2\alpha } (\ln\langle x\rangle)^{4- 4\alpha  }   | {u}^{k}|^4\langle x\rangle^{-2}(\ln\langle x\rangle)^{-4}\,dx\right)^{1/4}\\ &\leq  C \|\bar x^\alpha\rho\|_{L^\infty(\R^2)}   \big(\|\sqrt{\rho^{k} }  {u}^{k}\|_{L^2(\R^2)}+\| \nabla  {u}^{k}\|_{L^2(\R^2)}\big)\leq C_{0, T},\notag
 \end{align*} 
the above inequality can be further rewritten as
 \begin{multline*}
 \|\sqrt{\rho^k} \dot{u}^k(\tau, \cdot)\|_{L^2(B_{n_k})}^2\leq C \|(\Delta u^k-\nabla P_0^{k})/{\sqrt{\rho^k}}\|_{L^2(B_{n_k})}^2 \\
 + C_{0, T} \|\nabla  u^k\|_{L^2(B_{n_k})} \|\nabla^2 u^k\|_{L^2(B_{n_k})}+\|\nabla u^k\|_{L^2(B_{n_k})}^4.
 \end{multline*}
Therefore, by the fact that $u^k\in C_tH^2$ and by the approximate compatibility condition \eqref{cond-velocity-approx}, we have
\begin{multline}
\limsup_{\tau\to0^+}\|\sqrt{\rho} \dot{u}^k(\tau, \cdot)\|_{L^2(B_{n_k})}^2\\
\leq C(\|g\|_{L^2(\R^2)}^2 + C_{0, T} \|\nabla  u_0\|_{L^2(\R^2)} \|\nabla^2 u_0\|_{L^2(\R^2)}+\|\nabla u_0\|_{L^2(\R^2)}^4):=C_{\#}.
\end{multline}
Notice that we used \eqref{e.estnabla^2uepsn} to bound $\|\nabla^2 u^k\|_{L^2(\R^2)}$ uniformly in $k$. 
By a variant of inequality \eqref{es-2dW3}, we see that
\begin{multline*}
\frac{1}{2}\|\sqrt{\rho} \dot{u}^k(t, \cdot)\|_{L^2(B_{n_k})}^2 -C\|\nabla u^k(t, \cdot)\|_{L^2(B_{n_k})}^4\leq\  \|\sqrt{\rho^k} \dot{u}^k(t, \cdot)\|_{L^2(B_{n_k})}^2\\
  -2 \sum_{j=1, 2}   \langle   P^k,  \partial_j u^k\cdot\nabla (u^k)^j \rangle|(t).
\end{multline*}
 Then, integrating \eqref{es-2dW00} on the time interval $[\tau, t]$ and letting $\tau\to 0^+$ in the resulting  inequality, we obtain  that there exits a positive time $T_{\#}$ depending only on $C_{\#}$ such that
 \begin{align}
 \|\sqrt{\rho^k} \dot{u}^k(t, \cdot)\|_{L^2(B_{n_k})}^2\leq C C_{\#}, \quad{\rm for~all~time}~t\in(0, T_{\#}].
 \end{align}
Using the standard estimates for the Stokes system, we then get for all $t\in(0, T_{\#}]$,
\begin{align*}
\|\Delta u^k(t, \cdot)\|_{L^2(B_{n_k})}^2 +\|\nabla P^k(t, \cdot)\|_{L^2(B_{n_k})}^2\leq C\rho_*  \|\sqrt{\rho^k} \dot{u}^k(t, \cdot)\|_{L^2(B_{n_k})}^2\leq C C_{\#}.
\end{align*}
This together with estimate \eqref{es-shift-0} implies that
 \begin{align}
\sup_{t\in[0, T]}\|\nabla^2 u^k(t, \cdot)\|_{L^2(B_{n_k})}^2  \leq C (C_{\#}+ C_{0, T}).
\end{align}
Finally, from the  Gagliardo-Nirenberg  inequality \eqref{GN-in} and  \eqref{Le-2DUes1},
 \begin{align*}
\sup_{t\in[0, T]} \|\nabla u^k(t, \cdot)\|_{L^4(B_{n_k})} \leq\ & \sup_{t\in[0, T]}(\|\nabla u^k(t, \cdot)\|_{L^2(B_{n_k})} + \|\nabla^2 u^k(t, \cdot)\|_{L^2(B_{n_k})})\\
\leq\ &   C (C_{\#}+ C_{0, T}).
 \end{align*}
This completes the proof of proposition \ref{Prop-2dW}.
\end{proof}


\section{Proof of the uniqueness results}\label{s:uniqueness}
In this section, we show the uniqueness of the solutions that we constructed in the paper, both in the two- and three-dimensional case. The main difficulty we have to face is that having only bounded  solutions can not  ensure $L^2$ stability for the transport equation. Of course, one may reformulate system \eqref{INS} in Lagrangian coordinates to prove uniqueness. The advantage of doing so is obvious: the density is constant  along the flow. However, motivated by Hoff's paper \cite{Hoff} on the compressible Navier-Stokes equations, it is possible to directly estimate the difference of the densities in $\dot{H}^{-1}$  (see  for example \cite{DM23, DW}). 

  \subsection{The case when $u$ is in $L^2(\R^2)$ or  $L^6(\R^3)$}
  \label{sec.wsucase1}
 \begin{prop}[uniqueness in 2D and 3D under control of a Lebesgue norm of the velocity]\label{Prop-uniqueness}
Consider two finite-energy weak solutions $(\rho, u)$ and $(\bar\rho, \bar u)$ (in the sense of Definition \ref{defweaksolu}) to  system \eqref{INS} corresponding to the same initial data $(\rho_0, u_0)$ satisfying \eqref{initialcond}.
Assume that
\begin{equation}\label{assump-uniqueness}
\left\{\begin{aligned}
&\nabla\bar u\in  L^\infty(0, T; L^2(\R^d))\cap L^1(0, T; L^\infty(\R^d)),\\&\sqrt{t}\dot{\bar u}\in L^2(0, T; L^6(\R^d)),~\nabla (\sqrt{t}\dot{\bar u})\in L^2((0, T)\times \R^d),
\end{aligned}\right.
\end{equation}
and
\begin{itemize}
\item In the two-dimensional case, assume in addition that  $\rho_0$ satisfies condition \eqref{cond1} or \eqref{cond2},

\item In the three-dimensional case,  assume in addition that,
\begin{align}\label{assump-uniqueness3d}
\nabla u\in L^4(0, T; L^2(\R^3)),
\end{align}
\end{itemize}
then $(\rho, u)\equiv (\bar\rho, \bar u)$ on $[0, T]\times \R^d$.\footnote{Let us stress that the uniqueness is in the class of finite-energy weak solutions. Moreover, notice that the result in 2D is of weak-strong uniqueness type.}
\end{prop}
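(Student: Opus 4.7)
The plan is to introduce the differences $\delta\rho := \rho-\bar\rho$, $\delta u := u-\bar u$ and $\delta P := P-\bar P$. These satisfy the transport equation
\begin{equation*}
\partial_t \delta\rho + \bar u\cdot\nabla \delta\rho = -\nabla\cdot(\rho\,\delta u),
\end{equation*}
obtained from the mass equations after using $\nabla\cdot u=\nabla\cdot\bar u=0$, and, with the shorthand $\dot{\bar u} := \partial_t\bar u + \bar u\cdot\nabla\bar u$, the momentum equation
\begin{equation*}
\rho\bigl(\partial_t\delta u + u\cdot\nabla\delta u\bigr) - \Delta\delta u + \nabla\delta P
= -\rho\,\delta u\cdot\nabla\bar u - \delta\rho\,\dot{\bar u}.
\end{equation*}
Since $\rho$ is merely bounded and possibly discontinuous, $\delta\rho$ cannot be propagated in any positive-order Sobolev norm; the strategy announced in the preamble of Section \ref{s:uniqueness} is instead to control $\delta\rho$ in $\dot H^{-1}$ via duality, and to couple this with a weighted-energy estimate for $\delta u$.

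For the $\dot H^{-1}$ control I would introduce, for each fixed $T\in(0,T_0)$, the backward-in-time dual transport problem
\begin{equation*}
-\partial_t \phi - \bar u\cdot\nabla\phi = 0\quad\text{on}\ [0,T]\times\R^d,
\qquad \phi(T,\cdot) = (-\Delta)^{-1}\delta\rho(T,\cdot),
\end{equation*}
and pair it against the equation for $\delta\rho$. Because $\nabla\cdot\bar u=0$ and $\nabla\bar u\in L^1(0,T;L^\infty)$, a standard transport estimate gives $\|\nabla\phi(t,\cdot)\|_{L^2}\leq\exp\bigl(C\|\nabla\bar u\|_{L^1_tL^\infty_x}\bigr)\|\delta\rho(T,\cdot)\|_{\dot H^{-1}}$ uniformly in $t\in[0,T]$. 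Combining this with $\delta\rho(0,\cdot)\equiv 0$ and the elementary bound $\|\rho\,\delta u\|_{L^2}\leq\sqrt{\rho_*}\,\|\sqrt\rho\,\delta u\|_{L^2}$ yields the central estimate
\begin{equation*}
\|\delta\rho(T,\cdot)\|_{\dot H^{-1}}
\leq C\sqrt{\rho_*}\,\exp\bigl(C\|\nabla\bar u\|_{L^1_tL^\infty_x}\bigr)
\int_0^T \|\sqrt\rho\,\delta u(s,\cdot)\|_{L^2}\,ds,
\end{equation*}
which is valid identically in dimension $d=2,3$.

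Simultaneously, testing the momentum difference against $\delta u$ produces the energy identity
\begin{equation*}
\tfrac12\tfrac{d}{dt}\|\sqrt\rho\,\delta u\|_{L^2}^2 + \|\nabla\delta u\|_{L^2}^2
= -\int \rho(\delta u\cdot\nabla\bar u)\cdot\delta u
-\int\delta\rho\,\dot{\bar u}\cdot\delta u.
\end{equation*}
The first right-hand term is absorbed into $\|\nabla\bar u\|_{L^\infty}\|\sqrt\rho\,\delta u\|_{L^2}^2$, which is Gronwall-friendly through $\nabla\bar u\in L^1_tL^\infty_x$. The critical cross-term is controlled by the dual pairing
\begin{equation*}
\Bigl|\int\delta\rho\,\dot{\bar u}\cdot\delta u\Bigr|
\leq \|\delta\rho\|_{\dot H^{-1}}\,\|\nabla(\dot{\bar u}\cdot\delta u)\|_{L^2},
\end{equation*}
whose right-hand side is bounded by expanding $\nabla(\dot{\bar u}\cdot\delta u) = \nabla\dot{\bar u}\cdot\delta u + \dot{\bar u}\cdot\nabla\delta u$ and invoking the time-weighted hypotheses $\sqrt t\,\dot{\bar u}\in L^2_tL^6_x$ and $\nabla(\sqrt t\,\dot{\bar u})\in L^2_{t,x}$ together with Sobolev embedding. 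In the 2D case, Proposition \ref{Prop-intp} under \eqref{cond1} or \eqref{cond2} supplies the missing $L^2$ bound on $\delta u$ from $\|\sqrt\rho\,\delta u\|_{L^2}$ and $\|\nabla\delta u\|_{L^2}$; in the 3D case the embedding $\mathcal D^{1,2}(\R^3)\hookrightarrow L^6(\R^3)$ combined with the hypothesis $\nabla u\in L^4_tL^2_x$ plays the analogous role.

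The main obstacle I anticipate is closing the Gronwall loop despite the $t^{-1/2}$ singularity of $\dot{\bar u}$ near $t=0$. The key point is that the $\dot H^{-1}$-bound on $\delta\rho$ above depends \emph{linearly} on $\int_0^t \|\sqrt\rho\,\delta u\|_{L^2}\,ds$, which vanishes at $t=0$: substituting it back into the cross-term and applying Cauchy--Schwarz in time against the integrable weight $t^{-1/2}$ eventually produces an inequality of the schematic form
\begin{equation*}
\|\sqrt\rho\,\delta u(t,\cdot)\|_{L^2}^2
+\int_0^t\|\nabla\delta u(s,\cdot)\|_{L^2}^2\,ds
\leq \int_0^t K(s)\,\|\sqrt\rho\,\delta u(s,\cdot)\|_{L^2}^2\,ds
\end{equation*}
with $K\in L^1_{\mathrm{loc}}(\R_+)$ depending only on the norms of the "strong" solution $\bar u$. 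Gronwall's lemma then forces $\delta u\equiv 0$ on $[0,T]$, after which the $\dot H^{-1}$ estimate gives $\delta\rho\equiv 0$, completing the proof.
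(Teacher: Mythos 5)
Your Step 1 is essentially the paper's: the $\dot H^{-1}$ bound for $\delta\rho$ via $\phi=(-\Delta)^{-1}\delta\rho$ (you phrase it through a backward transport dual, which is equivalent), with the exponential factor $\exp(C\|\nabla\bar u\|_{L^1_tL^\infty_x})$ and the linear dependence on $\int_0^t\|\sqrt\rho\,\delta u\|_{L^2}\,ds$. The gap is in Step 2. You test the momentum difference against $\delta u$ and propose to control the cross term by
\begin{equation*}
\Bigl|\int\delta\rho\,\dot{\bar u}\cdot\delta u\Bigr|\leq \|\delta\rho\|_{\dot H^{-1}}\,\bigl(\|\nabla\dot{\bar u}\|_{L^2}\,\|\delta u\|_{L^\infty}+\|\dot{\bar u}\|_{L^6}\,\|\nabla\delta u\|_{L^3}\bigr),
\end{equation*}
but neither $\|\delta u\|_{L^\infty_x}$ nor $\|\nabla\delta u\|_{L^3_x}$ is controlled: $(\rho,u)$ is only a finite-energy weak solution, so the energy gives $\sqrt\rho\,\delta u\in L^\infty_tL^2_x$ and $\nabla\delta u\in L^2_{t,x}$ (plus $\delta u\in L^2$ in 2D by Proposition \ref{Prop-intp}, or $\delta u\in L^6$ in 3D), and no second derivatives or $L^\infty_x$ bound are available for $\delta u$. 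The hypotheses \eqref{assump-uniqueness} on $\sqrt t\,\dot{\bar u}$ cannot compensate for this; your schematic Gronwall inequality with $K\in L^1_{\rm loc}$ does not actually emerge from the sketch, because the offending factors are norms of $\delta u$, not of $\bar u$. (A secondary issue: even writing the energy identity for $\delta u$ tested against $\delta u$ is not free for solutions that only satisfy the energy inequality \eqref{enineq}.)

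This is precisely why the paper does \emph{not} run a direct energy estimate on $\delta u$: it introduces the solution $v$ of the backward dual parabolic Stokes-type system \eqref{eq-backward} with source $\rho\,\delta u$ and terminal data $v|_{t=T}=0$, proves the maximal-regularity-type a priori bound \eqref{apriori-bw} (giving $v\in L^2_tL^\infty_x$, $\nabla v\in L^2_tL^3_x$, $\nabla^2 v\in L^2_{t,x}$, all controlled by $\|\sqrt\rho\,\delta u\|_{L^2_{t,x}}$, with a gain of a positive power of $T$), and then tests the $\delta u$-equation against $v$. The dangerous pairing becomes $\langle\delta\rho\,\dot{\bar u},v\rangle$, where the factors $\|v\|_{L^2_tL^\infty}$ and $\|\nabla v\|_{L^2_tL^3}$ are exactly what the dual problem supplies and what $\delta u$ lacks; the smallness in $T$ closes the argument, and only afterwards does one recover $\nabla\delta u\equiv0$ and $\delta u\equiv0$. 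To repair your proof you would have to either prove such parabolic regularity for $\delta u$ itself (false in this class) or adopt the duality step; as written, the argument does not close.
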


\begin{proof}
We focus on the case when $T$ is small, say $T\leq \frac{1}{2}$. We remark that a standard connectivity argument enables us to prove the  the case of arbitrary large $T$.

\medskip

\noindent{\underline{Step 1: control of the difference of the densities.}}\\
Define the difference $\dr:=\rho-\bar\rho$ and $\du:= u-\bar u,$  then the system for $(\dr, \du)$ reads
\begin{equation}\label{es-D}
\left\{
\begin{aligned}
&\partial_t \dr +\bar u\cdot\nabla\dr +\du\cdot\nabla \rho=0,\\
&\rho(\partial_t\du +u\cdot\nabla \du)+\nabla\delta P- \Delta \du=-\dr\dot{\bar u}-\rho\du\cdot\nabla \bar u,\\
&\nabla\cdot \du=0,\\
&(\dr, \du)|_{t=0}=0.
\end{aligned}\right.
\end{equation}
We perform estimates for $\dr$ in $L^\infty(0, T; \dot{H}^{-1}(\R^d)$ and for $\du$ in $L^2((0, T)\times \R^d ).$  We remark that the general strategy is the same for dimensions $d=2, 3.$ 
To achieve these estimates, we set
\begin{align}\label{def-dr}
\phi:= (-\Delta)^{-1}\dr \quad{\rm so ~that} \quad\|\nabla\phi\|_{L^2(\R^d)}=\|\dr\|_{\dot{H}^{-1}(\R^d)}.
\end{align}
Now, testing the first equation of \eqref{es-D} against $\phi$ yields
\begin{align}\label{es-phi1}
\frac{1}{2}\frac{d}{dt}\|\nabla \phi\|_{L^2(\R^d)}^2\leq |\langle \bar u\cdot\nabla \Delta \phi, \phi\rangle|+ |\langle \du\cdot\nabla \rho, \phi\rangle|.
\end{align}
For the first term in the right-hand-side of \eqref{es-phi1}, using that $\nabla\cdot\bar u=0$ one has
\begin{align*}
\langle \bar u\cdot\nabla \Delta \phi, \phi\rangle= \sum_{1\leq i, j\leq d}\langle \bar u^{i} \partial_{ijj} \phi, \phi\rangle = \sum_{1\leq i, j\leq d}\langle \partial_j \bar{u}^{i} \partial_{j} \phi, \partial_i \phi\rangle
\end{align*}
so that
\begin{align*}
|\langle \bar{u}\cdot\nabla \Delta \phi, \phi\rangle|\leq \|\nabla \bar{u}\|_{L^\infty(\R^d)}\|\nabla\phi\|_{L^2(\R^d)}^2.
\end{align*}
As for the second term in the right-hand-side of \eqref{es-phi1}, noticing that $\nabla\cdot \du=0,$ then
\begin{align*}
\langle \du\cdot\nabla \rho, \phi\rangle=-\langle \du\cdot\nabla \phi, \rho\rangle,
\end{align*}
  and thus
  \begin{align*}
 |\langle \du\cdot\nabla \rho, \phi\rangle|\leq \sqrt{\rho_*}\|\sqrt{\rho}\du\|_{L^2(\R^d)}\|\nabla \phi\|_{L^2(\R^d)}.
  \end{align*}
Collecting the above two estimates into \eqref{es-phi1}, we discover that
\begin{align*}
\|\nabla\phi(t, \cdot)\|_{L^2(\R^d)}\leq \int_0^t \|\nabla \bar u(s, \cdot)\|_{L^\infty(\R^d)} \|\nabla \phi(s, \cdot)\|_{L^2(\R^d)}\,ds +\sqrt{\rho_*}\int_0^t \|\sqrt{\rho}\du(s, \cdot)\|_{L^2(\R^d)}\,ds.
\end{align*}
Hence, using \eqref{def-dr} and denoting
$$D(t):=\sup_{0<s\leq t} \,s^{-\frac{1}{2}}\|\dr (s, \cdot)\|_{\dot{H}^{-1}(\R^d)},$$
we get after using Young's inequality and Gronwall's lemma, for all $t\in [0, T],$
\begin{align}\label{es-deltarho}
D(t)\leq \sqrt{\rho_*}\|\sqrt{\rho}\du\|_{L^2((0, t)\times\R^d)} \exp(\|\nabla\bar u\|_{L^1(0, T; L^\infty(\R^d))}).
\end{align}

\medskip

\noindent{\underline{Step 2: duality argument.}}\\
At this stage,  in order to control the difference   $\sqrt{\rho}\du$ in $L^2((0, t)\times \R^d)$, we introduce the solution $v$ to the following linear backward parabolic system:
 \begin{equation}\label{eq-backward}
\left\{
\begin{aligned}
&\rho(\partial_t v + u\cdot\nabla v) +\nabla Q+ \Delta v=\rho\du,\\
&\nabla\cdot v=0,\\
&v|_{t=T}=0.
\end{aligned}\right.
\end{equation}
Let us first claim an important a priori estimate for the above system:
\begin{multline}\label{apriori-bw}
\sup_{t\in[0, T]} \|\left(\sqrt{\rho}v,  \nabla v\right)(t, \cdot)\|_{L^2(\R^d)}^2 +\int_0^T \left( \|\nabla v(t,\cdot  )\|_{L^2}^2+ \|(\sqrt{\rho} \dot{v},  \nabla^2 v, \nabla Q)(t,\cdot  )\|_{L^2}^2\right)\,dt\\
\leq  C \|\sqrt{\rho}\du\|_{L^2((0, T)\times \R^d)}^2,
\end{multline}
that we prove below. Solving the above problem is not included in the classical theory for the linear evolutionary Stokes system, since the coefficients are rough and may vanish.  It is not hard to find that if $(\rho, u)$ are regular with $\rho$ bounded from below away from zero then the existence issue can be solved thanks to the a priori estimate \eqref{apriori-bw}. In our setting,  the existence  may be established by a regularizing procedure of $(\rho, u),$  after using estimate \eqref{apriori-bw}, as in the proof of the existence part of Theorem \ref{thm2d} and \ref{thm3d}.

For the moment, we assume that estimate \eqref{apriori-bw} is satisfied. Testing the   equation of $\du$ in system \eqref{es-D} by  $v$ yields that
\begin{align}\label{es-bw-du}
\|\sqrt{\rho}\du\|_{L^2((0, T)\times \R^d)}^2\leq\int_0^T |\langle \dr \dot{\bar u}, v\rangle|\,dt + \int_0^T|\langle \rho\du\cdot\nabla\bar u, v\rangle|\,dt.
\end{align}
We have,  by H\"{o}lder's inequality
\begin{multline*}
\int_0^T |\langle \dr \dot{\bar u}, v\rangle|\,dt \leq   \|t^{-1/2}\dr \|_{L^\infty(0, T; \dot{H}^{-1}(\R^d))} \|\sqrt{t}\nabla (\dot{\bar u}\cdot v)\|_{L^1(0, T; L^2(\R^d))}\\
\qquad\leq D(T) (\|\nabla (\sqrt{t}\dot{\bar u})\|_{L^2((0, T)\times \R^d)} \|v\|_{L^2(0, T; L^\infty(\R^d))}+ \|\sqrt{t}\dot{\bar u}\|_{L^2 (0, T; L^6(\R^d))}\|\nabla v\|_{L^2(0, T; L^3(\R^d))}),
\end{multline*}
and
\begin{align*}
\int_0^T|\langle \rho\du\cdot\nabla\bar u, v\rangle|\,dt\leq \sqrt{\rho_*}\|\sqrt{\rho}\du\|_{L^2((0, T)\times \R^d)}\|\nabla \bar u\|_{L^\infty(0, T; L^2(\R^d))}\|v\|_{L^2(0, T; L^\infty(\R^d))}.
\end{align*}
By estimate \eqref{apriori-bw} and Gagliardo-Nirenberg's inequality \cite[Lemma II.3.3]{Galdibook}, we see that in 2D
\begin{align*}
 \|v\|_{ L^4(0, T; L^\infty(\R^2))} &\leq \left(\int_0^T\|  v\|_{L^2(\R^2)}^2\|\nabla^2 v\|_{L^2(\R^2)}^2\,dt\right)^{1/4}\\
&\leq C \|(\sqrt{\rho } v, \nabla v)\|_{L^\infty(0, T; L^2(\R^2))}^{1/2} \|\nabla^2 v\|_{L^2((0, T)\times \R^2)}^{1/2}\\
&\leq C   \|\sqrt{\rho}\du\|_{L^2((0, T)\times \R^2)},
\end{align*}
and in 3D
\begin{align*}
\|v\|_{ L^4(0, T; L^\infty(\R^3))} &\leq \left(\int_0^T\| \nabla  v\|_{L^2(\R^3)}^2\|\nabla^2 v\|_{L^2(\R^3)}^2\,dt\right)^{1/4}\\
&\leq C \|  \nabla v\|_{L^\infty(0, T; L^2(\R^3))}^{1/2} \|\nabla^2 v\|_{L^2((0, T)\times \R^3)}^{1/2}\\
&\leq C   \|\sqrt{\rho}\du\|_{L^2((0, T)\times \R^3)}.
\end{align*}
Moreover, one has (recall that $T\leq 1/2$)
\begin{align*}
\|\nabla v\|_{L^2(0, T; L^3(\R^d))}&\leq C \|\nabla v\|_{L^2((0, T)\times \R^d)}^{1-\frac{d}{6}}\|\nabla^2 v\|_{L^2((0, T)\times \R^d)}^\frac{d}{6}\\
&\leq C T^{\frac{1}{2}-\frac{d}{12}}  \|\nabla v\|_{L^\infty((0, T; L^2(\R^d))}^{1-\frac{d}{6}}\|\nabla^2 v\|_{L^2((0, T)\times \R^d)}^\frac{d}{6}\\
&\leq C T^\frac{1}{3}\|\sqrt{\rho}\du\|_{L^2((0, T)\times \R^d)}.
\end{align*}
So that one can conclude that
\begin{align*}
 \|v\|_{ L^2(0, T; L^\infty(\R^d))}   +\|\nabla v\|_{L^2(0, T; L^3(\R^d))}
 \leq  CT^\frac{1}{4}  \|\sqrt{\rho}\du\|_{L^2((0, T)\times \R^d)}.
 \end{align*}
Taking these estimates into \eqref{es-bw-du} and using assumption \eqref{assump-uniqueness}, we get
\begin{align*}
\|\sqrt{\rho}\du\|_{L^2((0, T)\times \R^d)}^2 \leq CT^\frac{1}{3}(D(T) \|\sqrt{\rho}\du\|_{L^2((0, T)\times \R^d)} + \|\sqrt{\rho}\du\|_{L^2((0, T)\times \R^d)}^2).
\end{align*}
This together with inequality \eqref{es-deltarho} implies that if $T$ is small enough then
\begin{align*}
 \sqrt{\rho}\du  \equiv 0\and D(t)\equiv0\quad{\rm on}~[0, T].
\end{align*}
Now, since we already know that $ \sqrt{\rho}\du$ and $\dr$ are zero on $[0, T],$  a direct $L^2$ estimate for the equation of $\du$ in system \eqref{es-D} gives directly that
\begin{align}
\int_0^t \|\nabla \du(s, \cdot)\|_{L^2(\R^d)}^2\,ds=0.
\end{align}
In the 2D case, using again the condition \eqref{cond1} or \eqref{cond2} together with Proposition \ref{Prop-intp}, one gets $\du\equiv0$ on $[0, T].$ In the 3D case, one gets same conclusion by  recalling that $\du \in L^2(\R_+; L^6(\R^3)).$

\medskip

\noindent{\underline{Step 3: proof of estimate \eqref{apriori-bw}.}}\\
It remains to prove estimate \eqref{apriori-bw}. Actually, the proof  is  similar to Proposition \ref{prop-H1}.  To achieve it, we proceed as follows.  At first, testing the equation by $v,$ one finds that
\begin{multline}\label{bw-es-L2}
\sup_{t\in(0, T)} \|\sqrt{\rho}v(t, \cdot)\|_{L^2(\R^d)}^2 +  \int_0^T\|\nabla v(t,\cdot  )\|_{L^2(\R^d)}^2\,dt\\
\leq  \|\sqrt{\rho}\du\|_{L^2((0, T)\times \R^d)}^2+ T\sup_{t\in(0, T)} \|\sqrt{\rho}v(t, \cdot)\|_{L^2(\R^d)}^2.
\end{multline}
Secondly, in order to obtain the higher-order estimates of $v$, we apply the standard maximal regularity estimates for the stationary Stokes system to \eqref{eq-backward} to get
that for any $p\in(1, \infty),$
\begin{equation}\label{bw-elliptic-1}
 \|\Delta v\|_{L^p(\R^d)}+\|\nabla Q\|_{L^p(\R^d)}\leq C(p)\|\rho\dot{v}\|_{L^p(\R^d)}\leq C(p) \sqrt{\rho_*}\|\sqrt{\rho}\dot{v}\|_{L^p(\R^d)}.
\end{equation}
Testing the first equation of \eqref{eq-backward} by $\dot{v}$ gives that
\begin{align*}
 \|\sqrt{\rho} \dot{v}(t,\cdot  )\|_{L^2(\R^d)}^2+ \langle \Delta v, \dot{v}\rangle+\langle\nabla Q, \dot{v}\rangle=\langle\rho\du, \dot{v}\rangle.
\end{align*}
Since
\begin{align*}
\langle \Delta v, \dot{v}\rangle=-\frac{d}{dt}\|\nabla v\|_{L^2}^2+ \sum_{1\leq i, j, k\leq d}\langle  \partial_{kk} v^i,   u^j \partial_j v^i\rangle
\end{align*}
and
\begin{align*}
\langle  \partial_{kk} v^i,   u^j \partial_j v^i\rangle= -\langle  \partial_{k} v^i,   \partial_k u^j \partial_j v^i\rangle.
\end{align*}
thus
\begin{align}\label{es-bw-1}
-\frac{d}{dt}\|\nabla v\|_{L^2}^2+   \|\sqrt{\rho} \dot{v}(t,\cdot  )\|_{L^2(\R^d)}^2=\langle \rho\du, \dot{v}\rangle-\langle\nabla Q, \dot{v}\rangle+
\sum_{1\leq i, j, k\leq d}\langle  \partial_{k} v^i,   \partial_k u^j \partial_j v^i\rangle.
\end{align}
Note that in the 2D case, we have by the Gagliardo-Nirenberg inequality \eqref{GN-in}
\begin{align}
\sum_{1\leq i, j, k\leq d}|\langle  \partial_{k} v^i,   \partial_k u^j \partial_j v^i\rangle|\leq& C \|\nabla v\|_{L^4(\R^2)}^2\|\nabla u\|_{L^2(\R^2)}\label{es-bw-11}\\
\leq& C\|\nabla v\|_{L^2(\R^2)}\|\nabla^2 v\|_{L^2(\R^2)} \|\nabla u\|_{L^2(\R^2)}\notag\\
\leq&  \frac{1}{ C{\rho_*}} \|\nabla^2 v\|_{L^2(\R^2)}^2 + C\rho_*   \|\nabla u\|_{L^2(\R^2)}^2\|\nabla v\|_{L^2(\R^2)}^2,\notag
\end{align}
while in the 3D case,
\begin{align}
\sum_{1\leq i, j, k\leq d}|\langle  \partial_{k} v^i,   \partial_k u^j \partial_j v^i\rangle|\leq& C \|\nabla v\|_{L^4(\R^3)}^2\|\nabla u\|_{L^2(\R^3)} \label{es-bw-12}\\
\leq& C\|\nabla v\|_{L^2(\R^3)}^\frac{1}{2}\|\nabla^2 v\|_{L^2(\R^3)}^\frac{3}{2} \|\nabla u\|_{L^2(\R^3)}\notag\\
\leq&   \frac{1}{C{\rho_*}} \| \nabla^2 v\|_{L^2(\R^3)}^2 + C\rho_*^3    \|\nabla u\|_{L^2(\R^3)}^4\|\nabla v\|_{L^2(\R^3)}^2.\notag
\end{align}
Meanwhile, using that $\nabla\cdot v=0$, we write
\begin{align*}
\langle\nabla Q, \dot{v}\rangle=-\langle Q,  \nabla\cdot (u\cdot\nabla v)\rangle=-\sum_{1\leq i, j\leq d} \langle  Q,  \partial_i u^j \partial_j v^i\rangle=-\sum_{1\leq i\leq d}\langle  Q,  \partial_i u\cdot  \nabla  v^i\rangle.
\end{align*}
Therefore in the  2D case,   inequality \eqref{ineq-000} implies
\begin{align}
|\langle\nabla Q, \dot{v}\rangle|&\leq  C\|Q\|_{{\rm BMO}(\R^2)} \|\nabla u\|_{L^2(\R^2)}\|\nabla v\|_{L^2(\R^2)}\label{es-bw-2}\\
&\leq C \|\nabla Q\|_{L^2(\R^2) }\|\nabla u\|_{L^2(\R^2)}\|\nabla v\|_{L^2(\R^2)}\notag\\
&\leq   \frac{1}{C{\rho_*}}  \|\nabla Q\|_{L^2(\R^2) }^2+ C\rho_*
\|\nabla u\|_{L^2(\R^2)}^2\|\nabla v\|_{L^2(\R^2)}^2.\notag
\end{align}
As for the 3D case, one has
\begin{align}
|\langle\nabla Q, \dot{v}\rangle| &\leq C \| Q\|_{L^6(\R^3) }\|\nabla u\|_{L^2(\R^3)}\|\nabla v\|_{L^3(\R^3)}\label{es-bw-3}\\
&\leq C \|   Q\|_{\mathcal{D}^{1, 2}(\R^3) }\|\nabla u\|_{L^2(\R^3)}\|\nabla v\|_{L^2(\R^3)}^\frac{1}{2} \|\nabla^2 v\|_{L^2(\R^3)}^\frac{1}{2} \notag\\
&\leq  \frac{1}{C{\rho_*}} (\|   \nabla^2 v\|_{L^2}^2 + \|   Q\|_{\mathcal{D}^{1, 2}(\R^3) }^2) + C\rho_*^3  \|\nabla u\|_{L^2(\R^3)}^4      \|\nabla v\|_{L^2(\R^3)}^2.\notag
\end{align}
It is easy to find that
\begin{align*}
|\langle\rho\du, \dot{v}\rangle|\leq \|\sqrt{\rho}\du\|_{L^2(\R^d)}\|\sqrt{\rho}\dot{v}\|_{L^2(\R^d)}.
\end{align*}
Putting the above estimate and \eqref{bw-elliptic-1},  \eqref{es-bw-11}-\eqref{es-bw-3} into \eqref{es-bw-1}, we conclude after applying Gronwall's lemma  that, in the 2D case
\begin{multline*}
\sup_{t\in(0, T)} \|\nabla v(t, \cdot)\|_{L^2(\R^2)}^2  +   \int_0^T\|(\sqrt{\rho}\dot{v},  \nabla^2 v, \nabla Q)(t,\cdot  )\|_{L^2(\R^2)}^2\,dt\\
\leq     \|\sqrt{\rho}\du\|_{L^2((0, T)\times \R^2)}^2 \exp(C\rho_* \|\sqrt{\rho_0}u_0\|_{L^2(\R^2)}^2)
\end{multline*}
and in the 3D case
\begin{multline*}
\sup_{t\in(0, T)} \|\nabla v(t, \cdot)\|_{L^2(\R^3)}^2  +   \int_0^T\|(\sqrt{\rho}\dot{v},  \nabla^2 v, \nabla Q)(t,\cdot  )\|_{L^2(\R^3)}^2\,dt\\
\leq    \|\sqrt{\rho}\du\|_{L^2((0, T)\times \R^3)}^2 \exp(C\rho_*^3\|\nabla u\|_{L^4(0, T; L^2(\R^3))}^4).
\end{multline*}
Finally, these estimates together with estimate \eqref{bw-es-L2} prove \eqref{apriori-bw}. This completes the proof of Proposition \ref{Prop-uniqueness}.
\end{proof}


\subsection{The case of two-dimensional far-field vacuum}
\label{subsec.2dfarfieldunique}
\begin{prop}[uniqueness in 2D in the case of far-field vacuum]\label{Prop-uniqueness1}
Let $d=2$. Consider two finite-energy weak solutions $(\rho, u)$ and $(\bar\rho, \bar u)$ (in the sense of Definition \ref{defweaksolu}) to  system \eqref{INS} corresponding to the same initial data $(\rho_0, u_0)$ satisfying \eqref{initialcond}.   Assume  in addition  that\footnote{Here  as in the whole paper $\alpha>1$, see \eqref{condition3'}.} $\rho\bar x^\alpha\in L^\infty(0, T;L^1(\R^2))\cap L^\infty((0,T)\times\R^2)$   and
\begin{equation}\label{assump-uniqueness1}
\left\{\begin{aligned}
& \bar x^{-1} \bar u\in L^\infty(0, T; L^2(\R^2)),  ~~\bar x^{-1}\bar u,~ \bar x^{-1}   u \in L^\infty((0, T)\times \R^2), ~~ \langle x\rangle^{-1}\bar u\in L^1(0, T; L^\infty(\R^2)),\\
&\nabla\bar u\in  L^\infty(0, T; L^2(\R^2))\cap L^1(0, T; L^\infty(\R^2)),\\
&\nabla (\sqrt{t}\dot{\bar u})\in L^2((0, T)\times \R^2), \sqrt{t}\dot{\bar u} \bar x^{-\beta}\in L^2(0, T; L^6(\R^2))~~{\rm for ~some }~\beta \in \Big(\frac{1}{3}, \frac{1}{2}\Big).
\end{aligned}\right.
\end{equation}
Then $(\rho, u)\equiv (\bar\rho, \bar u)$ on $[0, T]\times \R^2$.\footnote{Notice that the uniqueness is for finite-energy weak solutions. However, this result is not of weak-strong uniqueness type.}
\end{prop}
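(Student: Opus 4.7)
The plan is to mirror the three-step strategy of Proposition~\ref{Prop-uniqueness}: first control $\|\dr\|_{\dot H^{-1}}$ by $\|\sqrt{\rho}\,\du\|_{L^2}$ via the transport equation for $\dr$; then control $\|\sqrt{\rho}\,\du\|_{L^2}^2$ by itself (up to a small factor in $T$) and by $\|\dr\|_{\dot H^{-1}}$ through a duality argument against the backward Stokes-type system~\eqref{eq-backward}; and finally conclude $\du\equiv 0$, $\dr\equiv 0$ on a short time interval $[0,T_0]$. A standard connectivity argument then extends uniqueness to the full interval $[0,T]$.

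Step~1 is essentially unchanged. Using $\phi=-(-\Delta)^{-1}\dr$, the skew-adjointness trick for the transport term $\bar u\cdot\nabla\dr$, and the assumptions $\nabla\bar u\in L^1_tL^\infty_x$ and $\rho\leq \rho_*$, one obtains
\[D(t):=\sup_{0<s\leq t}s^{-1/2}\|\dr(s,\cdot)\|_{\dot H^{-1}(\R^2)}\leq C_T\,\|\sqrt{\rho}\,\du\|_{L^2((0,t)\times\R^2)}.\]
Likewise, testing the momentum equation for $\du$ against the solution $v$ of~\eqref{eq-backward} produces the same duality identity as before,
\[\int_0^T\!\!\int_{\R^2}\rho|\du|^2\,dxdt = \int_0^T\langle\dr\,\dot{\bar u},v\rangle\,dt+\int_0^T\langle\rho\,\du\cdot\nabla\bar u,v\rangle\,dt.\]

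The heart of the proof, and its main obstacle, is the a priori analysis of~\eqref{eq-backward}. In contrast with Proposition~\ref{Prop-uniqueness}, the absence of an $L^2$ bound on $u$ and $\bar u$ means that~\eqref{apriori-bw} is not available and $v$ can only be controlled in weighted norms. The plan is to establish a weighted analogue: in addition to the energy identity giving $\sqrt{\rho}\,v\in L^\infty_tL^2_x$ and $\nabla v\in L^2_{t,x}$, one needs $\sqrt{\rho}\,\dot v,\nabla^2 v,\nabla Q \in L^2_{t,x}$ together with weighted Lebesgue bounds on $v$ itself. The required ingredients are: (i) the weighted interpolation inequality of Proposition~\ref{Le-Li-Xin}, which converts energy control of $\sqrt{\rho}\,v$ into weighted $L^m$ control of $v$ for $m\geq 2$; (ii) the bound $\bar x^{-1}u\in L^\infty_{t,x}$ combined with the density decay $\rho\,\bar x^\alpha\in L^1\cap L^\infty$, needed to handle the transport coefficient $\rho u$ in~\eqref{eq-backward}; (iii) the ${\rm BMO}/\mathcal H^1$ duality~\eqref{ineq-000} combined with $\nabla u\in L^\infty_tL^2_x$, used as in the proof of Proposition~\ref{Prop-2dW} to handle the $\langle\nabla Q,\dot v\rangle$ cross term; and (iv) standard maximal regularity for the stationary Stokes system to transfer $\sqrt{\rho}\,\dot v$ control to $\nabla^2 v$ and $\nabla Q$.

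Granted these weighted estimates, the two duality integrals can be bounded as follows. In the first one, split $\sqrt t\,\nabla(\dot{\bar u}\cdot v)=\nabla(\sqrt t\,\dot{\bar u})\cdot v+\sqrt t\,\dot{\bar u}\cdot\nabla v$; pair $\nabla(\sqrt t\,\dot{\bar u})\in L^2_{t,x}$ with a weighted $L^2_tL^\infty_x$ bound on $v$ obtained by Gagliardo--Nirenberg from Step~3, and pair $\sqrt t\,\dot{\bar u}\,\bar x^{-\beta}\in L^2_tL^6_x$ with $\bar x^{\beta}\nabla v\in L^2_tL^3_x$; this last pairing, together with the 2D Sobolev interpolation used to produce $\bar x^{\beta}\nabla v\in L^2_tL^3_x$, is precisely what dictates the range $\beta\in(1/3,1/2)$ in~\eqref{assump-uniqueness1}. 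The second term is controlled by Cauchy--Schwarz using $\nabla\bar u\in L^\infty_tL^2_x$ together with the weighted $L^2_tL^\infty_x$ bound on $v$. Combined with Step~1, these estimates yield, for $T$ small,
\[\|\sqrt{\rho}\,\du\|_{L^2((0,T)\times\R^2)}^2\leq C\,T^\sigma\,\big(D(T)\|\sqrt{\rho}\,\du\|_{L^2}+\|\sqrt{\rho}\,\du\|_{L^2}^2\big)\]
for some $\sigma>0$, whence $\sqrt{\rho}\,\du\equiv 0$ and $\dr\equiv 0$ on some $[0,T_0]$. To upgrade $\sqrt{\rho}\,\du\equiv 0$ to $\du\equiv 0$, one uses the by-product $\int_0^{T_0}\|\nabla\du\|_{L^2}^2=0$ from the energy identity for $\du$: $\du$ vanishes on the support of $\rho$ (which has positive measure since $\rho_0\not\equiv 0$) and has zero gradient everywhere, hence $\du\equiv 0$.
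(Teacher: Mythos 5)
Your general architecture (transport estimate for the density difference, duality against the backward system \eqref{eq-backward}, smallness in $T$, then $\nabla\du=0$ to conclude) is the right one, and your list of ingredients for the weighted a priori estimate on $v$ matches what the paper uses. However, there is a genuine gap at the heart of Step~2, and it is exactly the point where the far-field case differs from Proposition \ref{Prop-uniqueness}. You keep the \emph{unweighted} difference $\dr$ in $\dot H^{-1}$, but the hypotheses \eqref{assump-uniqueness1} only give \emph{weighted} control of $\dot{\bar u}$ (namely $\sqrt{t}\,\dot{\bar u}\,\bar x^{-\beta}\in L^2_tL^6$), and the backward solution $v$ is only controlled in weighted norms with \emph{decaying} weights ($\bar x^{-\beta}v\in L^{2+\beta}_tL^\infty$, $\bar x^{-\frac12-\beta}v\in L^\infty_tL^3$, plus unweighted energy bounds on $\nabla v,\nabla^2 v$), since neither $u$ nor $v$ lies in $L^2(\R^2)$ here. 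Consequently the pairing $\int_0^T\langle \dr\,\dot{\bar u},v\rangle\,dt\le D(T)\,\|\sqrt t\,\nabla(\dot{\bar u}\cdot v)\|_{L^1_tL^2}$ does not close: for the piece $\nabla(\sqrt t\,\dot{\bar u})\cdot v$ you would need an \emph{unweighted} $L^2_tL^\infty$ bound on $v$ (not available), and for the piece $\sqrt t\,\dot{\bar u}\cdot\nabla v$ your proposed bound $\bar x^{\beta}\nabla v\in L^2_tL^3$ has a \emph{growing} weight and cannot be produced from the energy/maximal-regularity estimates for \eqref{eq-backward}; there is no mechanism giving growing-weight integrability of $\nabla v$.

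The paper resolves this by shifting the weight onto the density difference: it works with $\delta\!\varrho:=\bar x^{\beta}\dr$, so the duality pairing becomes $\langle \bar x^{\beta}\dr,\ \bar x^{-\beta}\dot{\bar u}\cdot v\rangle$, in which every factor carries only the available (decaying-weight or unweighted) controls, and the choice $\beta\in(\tfrac13,\tfrac12)$ enters through $|\nabla(\bar x^{-\beta}\dot{\bar u}\cdot v)|\lesssim |\nabla\dot{\bar u}|\,|\bar x^{-\beta}v|+|\dot{\bar u}\bar x^{-\beta}|\,|\nabla v|+|\dot{\bar u}\bar x^{-\beta}|\,|v\,\bar x^{-\frac12-\beta}|$ together with \eqref{apriori-bw1}. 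The price is that your claim ``Step 1 is essentially unchanged'' is then no longer true: the transport equation for $\delta\!\varrho$ contains the extra terms $\bar x^\beta\du\cdot\nabla\rho$ and $\beta\,\delta\!\varrho\,\bar u\cdot\nabla\ln\bar x$, whose estimate requires the $\mathrm{BMO}$--$\mathcal H^1$ duality combined with the logarithmic Hardy-norm inequality \eqref{ineq-Hardy}; this produces terms of the form $\bar D|\ln\bar D|$, so the conclusion $\bar D\equiv0$ rests on Osgood's lemma rather than the Gronwall-type absorption you invoke. Your final step (from $\sqrt{\rho}\,\du\equiv0$ and $\nabla\du\equiv0$ to $\du\equiv0$) is fine, but as written the proposal is missing the weighted density difference, the log-Lipschitz structure it forces, and the Osgood argument, which are precisely the new ideas of this proposition.
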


\begin{proof}
\noindent{\underline{Step 1: control of the difference of the densities.}}\\
Recall  the system \eqref{es-D} satisfied by the difference. We define\footnote{Notice that in contrast with Subsection \ref{sec.wsucase1}, we work here, in the far-field case, with a weighted version of the difference of the densities.} $\delta\!\varrho:=\bar{x}^\beta\dr$  (note that $\beta< \frac{\alpha}{2}$) and thus get the following  equation
\begin{align}\label{eq-varrho}
 \partial_t  \delta\!\varrho +\bar u\cdot\nabla \delta\!\varrho +\bar{x}^\beta\du\cdot\nabla \rho=\beta\delta\!\varrho\bar u\cdot\nabla \ln\bar{x}.
\end{align}
In contrast to the previous subsection, we define an inhomogeneous version of the test function for the duality proof, namely
\begin{align}\label{def-drr}
\phi:= ({\rm Id}-\Delta)^{-1}\delta\!\varrho\quad{\rm so ~that} \quad\|\phi\|_{H^1(\R^2)}=\|\delta\!\varrho\|_{{H}^{-1}(\R^2)}.
\end{align}
Now, testing the first equation of \eqref{eq-varrho} against $\phi$ yields that
\begin{align}\label{es-vr1}
\frac{1}{2}\frac{d}{dt}\|  \phi\|_{H^1(\R^2)}^2\leq&~ |\langle \bar u\cdot\nabla \Delta \phi, \phi\rangle|+|\langle \bar{x}^\beta \du\cdot\nabla \rho, \phi\rangle|+ \beta|\langle \Delta \phi\bar{u}\cdot\nabla \ln\bar{x}, \phi\rangle|\notag\\
&+ \beta|\langle  \phi\bar{u}\cdot\nabla \ln\bar{x}, \phi\rangle|.
\end{align}

\subsubsection*{First term in the right-hand-side of \eqref{es-vr1}} 
One has
\begin{align}\label{es-U1}
|\langle \bar{u}\cdot\nabla \Delta \phi, \phi\rangle|\leq \|\nabla \bar{u}\|_{L^\infty(\R^2)}\|\nabla\phi\|_{L^2(\R^2)}^2.
\end{align}

\subsubsection*{Second term in the right-hand-side of \eqref{es-vr1}} 
Noticing that $\nabla\cdot \du=0$, we get
\begin{align}\label{e.eqsecondtermes-vr1}
\langle \bar{x}^\beta\du\cdot\nabla \rho, \phi\rangle=-\langle \du\cdot\nabla \phi, \bar{x}^\beta\rho\rangle-\langle \du\cdot\nabla \bar{x}^\beta,  \rho\phi\rangle.
\end{align}
By H\"older's inequality
  \begin{align}
 |\langle \du\cdot\nabla  \phi, \bar{x}^\beta \rho\rangle|&\leq  \|\sqrt{\rho}\du\|_{L^2(\R^2)}\|\nabla \phi\|_{L^2(\R^2)}\|\sqrt{\rho}\bar{x}^\beta\|_{L^\infty(\R^2)}\label{es-U2}\\
 &\leq  \|\sqrt{\rho}\du\|_{L^2(\R^2)}\|\nabla \phi\|_{L^2(\R^2)}(1+ \| {\rho}\bar{x}^{2\beta}\|_{L^\infty(\R^2)}).\notag
  \end{align}
For the second term in the right-hand-side of \eqref{e.eqsecondtermes-vr1} we use the following inequality from \cite[Theorem 1.1]{MR} which is valid for any ${\rm BMO}(\R^2)$ function $f$ with compact support and $g\in L^1(\R^2)\cap L^\infty(\R^2)$,
  \begin{align}\label{ineq-Hardy}
 |\langle f, g\rangle| \leq   C \|f\|_{{\rm BMO}(\R^2)} \, \|g\|_{L^1(\R^2)}\left(|\ln\|g\|_{L^1(\R^2)}|+\ln (e+\|g\|_{L^\infty(\R^2)})\right),
  \end{align}
  with the functions $f=\phi,~g= \rho\delta u\cdot\nabla \bar{x}^\beta.$ 
In particular, we use that $t^{-\frac{1}{2}}\phi\in L^{\infty}(0, T; H^1(\R^2))$ and the fact that the space $\mathcal{D}(\R^2)$ of smooth compactly supported functions on $\R^2$ is dense in $H^1(\R^2).$   
Noticing
  \begin{align*}
  \|\rho\du\cdot\nabla \bar{x}^\beta\|_{L^1(\R^2)}&\leq C \|\sqrt{\rho}\du\|_{L^2(\R^2)} \|\sqrt{\rho}\bar{x}^{\beta-1}(\ln\langle x\rangle)^2\|_{L^2(\R^2)}\\
  &\leq C \|\sqrt{\rho}\du\|_{L^2(\R^2)} \| {\rho}\bar{x}^{2\beta}\|_{L^1}^\frac{1}{2},
  \end{align*}
  and that
\begin{align*}
\|{\rho}\du\cdot\nabla\bar{x}^\beta\|_{L^\infty(\R^2)}&\leq   \|{\rho}\bar{x}^{\beta} (\ln\langle x\rangle)^2 \|_{L^\infty(\R^2)} \|\bar{x}^{-1} \du\|_{L^\infty(\R^2)}\\
&\leq C\|{\rho}\bar{x}^{2\beta}  \|_{L^\infty(\R^2)}
( \|\bar{x}^{ -1} u \|_{L^\infty(\R^2)}+  \|\bar{x}^{ -1}  \bar u \|_{L^\infty(\R^2)}),
\end{align*}
we see that estimate \eqref{ineq-Hardy} yields 
\begin{align*}
  |\langle \du\cdot\nabla \bar{x}^\beta,  \rho\phi\rangle|&\leq C \|  \phi\|_{{\rm BMO}(\R^2)}  \|\sqrt{\rho}\du\|_{L^2(\R^2)}  ( |\ln  \|\rho\du\cdot\nabla \bar{x}^\beta\|_{L^1(\R^2)} |+ C   )\\
 & \leq C \|\nabla \phi\|_{L^2(\R^2)}  \|\sqrt{\rho}\du\|_{L^2(\R^2)},
\end{align*}
where in order to get the last inequality, we used  the embedding $\dot{H}^1(\R^2)\hookrightarrow {\rm BMO}(\R^2)$ and that
\begin{align*}
 \|\rho\du\cdot\nabla \bar{x}^\beta\|_{L^1(\R^2)}&\leq
    \|{\rho}\bar{x}^{\beta} (\ln\langle x\rangle)^2 \|_{L^1(\R^2)} \|\bar{x}^{-1} \du\|_{L^\infty(\R^2)}\\
&\leq C\|{\rho}\bar{x}^{2\beta}  \|_{L^1(\R^2)}
( \|\bar{x}^{ -1} u \|_{L^\infty(\R^2)}+  \|\bar{x}^{ -1}  \bar u \|_{L^\infty(\R^2)})\leq C.
\end{align*}
These results  combined with inequality \eqref{es-U2} imply that
\begin{align}\label{es-U3}
   | \langle \bar{x}^\beta\du\cdot\nabla \rho, \phi\rangle|\leq C \|\nabla \phi\|_{L^2(\R^2)}  \|\sqrt{\rho}\du\|_{L^2(\R^2)}.
\end{align}
\subsubsection*{Third term in the right-hand-side of \eqref{es-vr1}} 
We rewrite it into
 \begin{align}\label{e.estthirdtermes-vr1}
| \langle \Delta\phi\bar{u}\cdot\nabla \ln\bar{x}, \phi\rangle |\leq | \langle \nabla\phi\cdot\nabla(\bar{u}\cdot\nabla\ln \bar{x}), \phi\rangle |+ | \langle |\nabla\phi|^2, \bar{u}\cdot\nabla \ln\bar{x}\rangle |.
 \end{align}
Similarly, 
as above one has
\begin{align*}
| \langle \nabla\phi\cdot\nabla(\bar{u}\cdot\nabla \ln\bar{x}), \phi\rangle |\leq& \|\nabla \phi\|_{L^2(\R^2)} \|\nabla \phi \cdot\nabla (\bar{u}\cdot\nabla \ln\bar{x})\|_{\mathcal{H}^1(\R^2)}.
\end{align*}
We now rely on \eqref{ineq-Hardy} with $f:=\nabla \phi \cdot\nabla (\bar{u}\cdot\nabla \ln\bar{x})$ to estimate the right-hand-side in the previous estimate. 
Noticing  from inequalities \eqref{es-wf1} and \eqref{es-wf2} that
\begin{align*}
 \|\nabla \phi \cdot\nabla (\bar{u}\cdot\nabla \ln\bar{x})\|_{L^1(\R^2)}&\leq   \|\nabla \phi\|_{L^2(\R^2)}(\|\nabla \bar{u} \nabla \ln\bar x\|_{L^2(\R^2)}+ \|\bar{u}\langle x\rangle^{-2}\|_{L^2(\R^2)}) \\
 &  \leq C\|\nabla \phi\|_{L^2(\R^2)}(\|\nabla \bar{u}  \|_{L^2(\R^2)}+ \|\bar{u} \bar x^{-1}\|_{L^2(\R^2)})
\end{align*}
 and  additionally using the Gagliardo-Nirenberg inequality \eqref{GN-in} we have \begin{align*}
 \|\nabla \phi \cdot\nabla (\bar{u}\cdot\nabla \ln\bar{x})\|_{L^\infty(\R^2)}&\leq  C\|\nabla \phi\|_{L^\infty(\R^2)} (\|\nabla \bar u\|_{L^\infty(\R^2)}+ \|\bar u \bar{x}^{-1}\|_{L^\infty(\R^2)})\\
 &\leq    C (\|\nabla \phi\|_{L^2 (\R^2)} + \|\bar{x}^\beta\dr\|_{L^4(\R^2)})(\|\nabla \bar u\|_{L^\infty(\R^2)}+ \|\bar u \bar{x}^{-1}\|_{L^\infty(\R^2)}).
 \end{align*}
Thus one has
\begin{align}\label{es-U4}
  | \langle \nabla\phi\cdot\nabla(\bar{u}\cdot\nabla \ln\bar{x}), \phi\rangle |\leq C \|\nabla \phi\|_{L^2(\R^2)}^2 (|\ln \|\nabla \phi\|_{L^2(\R^2)}|+   \|\nabla \bar u\|_{L^\infty(\R^2)}+ \|\bar u \bar{x}^{-1}\|_{L^\infty(\R^2)} +1).
\end{align}
For the remaining term in \eqref{e.estthirdtermes-vr1}, notice that
 \begin{align*}
 | \langle |\nabla\phi|^2, \bar{u}\cdot\nabla \ln\bar{x}\rangle |\leq \|\nabla \phi\|_{L^2(\R^2)}^2\|\bar u \langle x\rangle^{-1}\|_{L^\infty(\R^2)}.
 \end{align*}
\subsubsection*{Fourth term in the right-hand-side of \eqref{es-vr1}} 
By the same reasoning as above, we have
\begin{align}\label{es-U5}
|\langle  \phi\bar{u}\cdot\nabla \ln\bar{x}, \phi\rangle|&\leq \|\phi\|_{L^2(\R^2)}^2 \|\bar{u}\cdot\nabla \ln{\bar{x}}\|_{L^\infty(\R^2)}\notag\\
&\leq C\|\phi\|_{L^2(\R^2)}^2 \|\bar{u}    \langle x\rangle^{-1}\|_{L^\infty(\R^2)}.
\end{align}

\subsubsection*{Final estimate of \eqref{es-vr1}} 
Putting the above estimates \eqref{es-U1}, \eqref{es-U3}, \eqref{es-U4}, \eqref{es-U5} together into \eqref{es-vr1}, we get
\begin{align*}
    \frac{d}{dt}\|  \phi(t, \cdot)\|_{H^1(\R^2)}^2\leq& C \|  \phi\|_{H^1(\R^2)}^2(|\ln \| \phi\|_{H^1(\R^2)}|+\|\nabla\bar u\|_{L^\infty(\R^2)} + \|\bar u \langle x\rangle^{-1}\|_{L^\infty(\R^2)} +1 ) \\
   &\quad + \| \phi\|_{H^1(\R^2)}\|\sqrt{\rho}\du\|_{L^2(\R^2)}.
\end{align*}
Hence,  denoting
$$\bar{D}(t):=\sup_{0<s\leq t} \,s^{-\frac{1}{2}}\|\delta\!\varrho (s, \cdot)\|_{ {H}^{-1}(\R^2)},$$
 we further get that for all $t\in [0, T],$
\begin{align} \label{es-U000}
\bar{D}(t)\leq&  C \int_0^t \bar{D}(s)\left( |\ln \bar{D}(s)|+ |\ln s|+\|\nabla\bar u\|_{L^\infty(\R^2)} + \|\bar u \langle x\rangle^{-1}\|_{L^\infty(\R^2)} +1\right)\,ds \\
&\quad+C\|\sqrt{\rho}\du\|_{L^2((0, t)\times\R^2)}.\notag
\end{align}

\medskip

\noindent{\underline{Step 2: duality argument.}}\\
At this stage,  in order to control the difference   $\sqrt{\rho}\du$ in $L^2((0, t)\times \R^2)$, we will estimate the solution $v$ to the  linear backward parabolic system \eqref{eq-backward} as in Subsection \ref{sec.wsucase1}.  Indeed, in our current setting, the solvability of problem \eqref{eq-backward} can be achieved by following the steps in Subsection \ref{s:ffv}. Moreover,  similarly to  estimate \eqref{apriori-bw} and also to \eqref{es-Wvelocity}, \eqref{es-locLinfty}, we have
\begin{multline}\label{apriori-bw1}
\sup_{t\in(0, T)} \|\left(\sqrt{\rho}v,\nabla v\right)(t, \cdot)\|_{L^2(\R^2)}+ \|(\nabla v,  \nabla^2 v) \|_{L^2(0, T; L^2(\R^2))} +\|\bar x^{-\beta} v\|_{L^{2+\beta}(0, T; L^\infty(\R^2))} \\
+\|\bar x^{-\frac{1}{2}-\beta} v\|_{L^{\infty}(0, T; L^3(\R^2))}\leq   C \|\sqrt{\rho}\du\|_{L^2((0, T)\times \R^2)}.
\end{multline}
Now, recall that
\begin{align}\label{es-UU}
\|\sqrt{\rho}\du\|_{L^2((0, T)\times \R^2)}^2\leq\int_0^T |\langle \dr \dot{\bar u}, v\rangle|\,dt + \int_0^T|\langle \rho\du\cdot\nabla\bar u, v\rangle|\,dt.
\end{align}
By H\"{o}lder's inequality
\begin{align*}
&\int_0^T|\langle \rho\du\cdot\nabla\bar u, v\rangle|\,dt\\
 \leq& \|\sqrt{\rho}\du\|_{L^2((0, T)\times \R^2)}\|\nabla \bar u\|_{L^\infty(0, T; L^2(\R^2))}\|\sqrt{\rho}v\|_{L^2(0, T; L^\infty(\R^2))}\\
 \leq& C \|\sqrt{\rho}\du\|_{L^2((0, T)\times \R^2)}\|\nabla \bar u\|_{L^\infty(0, T; L^2(\R^2))} \|\bar x^{-\beta}v\|_{L^2(0, T; L^\infty(\R^2))} (1+ \|\rho \bar x^{2\beta}\|_{L^\infty((0, T\times \R^2)}),
\end{align*}
and
\begin{align*}
\int_0^T |\langle \dr \dot{\bar u}, v\rangle|\,dt &\leq   \|t^{-1/2}\dr  \bar{x}^\beta\|_{L^\infty(0, T; \dot{H}^{-1}(\R^2))} \|\sqrt{t}\nabla (\bar{x}^{-\beta}\dot{\bar u}\cdot v)\|_{L^1(0, T; L^2(\R^2))}\\
&\leq \bar D(T) \|\sqrt{t}\nabla (\bar{x}^{-\beta}\dot{\bar u}\cdot v)\|_{L^1(0, T; L^2(\R^2))}.
\end{align*}
Using inequalities \eqref{es-wf1}, \eqref{es-wf2} and  $\beta< \frac{1}{2}$, we have
\begin{align*}
   | \nabla (\bar{x}^{-\beta}\dot{\bar u}\cdot v)| \leq  C(|\nabla \dot{\bar u}||\bar{x}^{-\beta}v| +  |\dot{\bar u}  \bar{x}^{-\beta}| |\nabla v| + |\dot{\bar u}\bar{x}^{-\beta} | |v \bar{x}^{-\frac{1}{2}-\beta}|),
\end{align*}
which implies that
\begin{align*}
    &\|\sqrt{t}\nabla (\bar{x}^{-\beta}\dot{\bar u}\cdot v)\|_{L^1(0, T; L^2(\R^2))}\\
    \leq & \|\sqrt{t}\nabla\dot{\bar u}\|_{L^2((0, T)\times \R^2)} \|\bar x^{-\beta}v\|_{L^2(0, T; L^\infty(\R^2))}+  \|\sqrt{t} \dot{\bar u}\bar x^{-\beta}\|_{L^2 (0, T; L^6(\R^2))} \|\nabla v\|_{L^2(0, T; L^3(\R^2))}\\
&\quad+ T^\frac{1}{2}\|\sqrt{t} \dot{\bar u}\bar x^{-\beta}\|_{L^2 (0, T; L^6(\R^2))} \|  v \bar x^{-\frac{1}{2}-\beta}\|_{L^\infty(0, T; L^3(\R^2))}
\end{align*}
So by estimate \eqref{apriori-bw1}  we can bound the right-hand-side of \eqref{es-UU} and get
\begin{align*}
\|\sqrt{\rho}\du\|_{L^2((0, T)\times \R^2)}^2\leq  CT^{\frac{\beta}{6}}\big(\bar{D}(T)\|\sqrt{\rho}\du\|_{L^2((0, T)\times \R^2)}+ \|\sqrt{\rho}\du\|_{L^2((0, T)\times \R^2)}^2\big).
\end{align*}
Clearly, the above inequality implies that, if $T$ is small enough then
\begin{align}
\|\sqrt{\rho}\du\|_{L^2((0, T)\times \R^2)}\leq  C \bar{D}(T) T^\frac{\beta}{6}.\label{es-U0}
\end{align}
Putting the above inequality into \eqref{es-U000}, we get for $T$ small enough
\begin{align*}
\bar{D}(T)\leq  C\left(\int_0^T \bar{D}(t)\left(\|\nabla\bar u\|_{L^\infty(\R^2)} + \|\bar u \langle x\rangle^{-1}\|_{L^\infty(\R^2)} + |\ln \bar{D}(t)|- \ln t\right)\,dt \right).
\end{align*}
From Osgood's lemma \cite[Lemma 3.4]{BCD}, we then infer that $\bar D(t)\equiv0$ on $[0, T],$  and thus $\sqrt{\rho}\du\equiv 0,~\dr\equiv0$ on $[0, T]$ thanks to inequality \eqref{es-U0}
and equation \eqref{eq-varrho}, respectively.
Finally,  $\du\equiv 0$ and $\dr\equiv0$ on $[0, \infty)$ can be concluded similarly as previous subsection. In particular, one needs to use the functional inequality \eqref{es-locL21}.
\end{proof}

\subsection{Proof of uniqueness}
To complete the proof of uniqueness part of Theorem \ref{thm2d} and Theorem \ref{thm3d}, it suffices to observe   that all the  assumptions  in Proposition \ref{Prop-uniqueness} and Proposition \ref{Prop-uniqueness1} are satisfied by those solutions constructed in Theorem \ref{thm2d} and Theorem \ref{thm3d}.\qed

\section{Proof of Theorem \ref{C1}} \label{B}

We remark that the general strategy is the same in dimensions $d=2$ and $3$.
Assume that $\partial\Omega_0$ corresponds to the level set $\{f_0=0\}$ of some $\mathcal{C}^{1, \gamma}$ function $f_0: \Omega_0\to \R.$ Then we know that $\partial\Omega_t=X(t, f^{-1}_0(\{0\}))$ corresponds to the level set $\{f_t=0\}$ with $f_t:= f_0\circ (X(t,\cdot))^{-1},$ where   $(X(t,\cdot))^{-1}$ is the inverse function of $X(t,\cdot)$. Indeed, as the flow is incompressible, the Jacobian of $X$ is identically equal to $1$. Hence the classical inverse function theorem ensures the existence and regularity of $ X^{-1}.$

Now, fix some $T>0.$
In the 2D case, according to Theorem \ref{thm2d},   for all $r\in[2, \infty)$ 
we can find  $q>2$ such that
\begin{align*}
    \| \nabla^2 u\|_{L^{ 1} (0, T; L^r(\R^2)}\leq \|\nabla^2( \sqrt{t} u)\|_{L^{q} (0, T; L^r(\R^2))} \|{t}^{-\frac{1}{2}}\|_{L^{q'}(0,T)} \leq C_{0, T}
\end{align*}
and by the Gagliardo-Nirenberg inequality \eqref{GN-in} and Young's inequality
\begin{align*}
    \| \nabla  u\|_{L^{ 1} (0, T; L^r(\R^2)}\leq C(\|\nabla u\|_{L^{1} (0, T; L^2(\R^2))} + \|\Delta u\|_{L^{1}(0, T; L^2(\R^2))} )\leq C_{0, T}.
\end{align*}
By Sobolev's embedding, one has $\nabla u\in L^1(0, T; \mathcal{C}^{0, \gamma})$ for all $\gamma\in (0, 1).$ Consequently, the flow $X(t, \cdot)$ is $\mathcal{C}^{0, \gamma}$ and so is $f_t.$

For the 3D case, Theorem \ref{thm3d} ensures that  for all $r\in[2, 6)$, $\nabla u\in L^1(0, T; L^r(\R^3))$ and we can find  $q>2$ such that $\nabla^2( \sqrt{t}u)\in {L^{ q} (0, T; L^r(\R^3))}.$ Thus $\nabla u\in L^1(0, T; W^{1, r}(\R^3)).$ This finally implies that $f_t$ is a $\mathcal{C}^{0, \gamma}$ function, if $\gamma<\frac{1}{2}.$\qed

\begin{appendices}
\section{Functional spaces and  inequalities}\label{A}
For the reader's convenience, we here recall a few results
that are used repeatedly in the paper. Let us first recall
the definitions of homogeneous Sobolev spaces.
\begin{defi}[homogeneous Sobolev space]
Let $s$ be in $\mathbb{R}.$ The homogeneous Sobolev space $\dot{H}^s(\mathbb{R}^d)$ is the set of tempered distributions $u$ on $\mathbb{R}^d,$ with  Fourier transform in  $L^1_{loc}(\mathbb{R}^d),$  satisfying
$$\|u\|_{\dot{H}^s(\R^d)}:=\||\xi|^s\mathcal{F}(u)(\xi)\|_{L^2(\R^d)}<\infty.$$
\end{defi}

We often use the following  Gagliardo-Nirenberg inequalities.
\begin{prop}[Gagliardo-Nirenberg inequalities; {\cite[Lemma II.3.3]{Galdibook}}]\label{GN}
   If $(q,r)\in (1,\infty)^2$, there exists a constant $C$ depending on $q$ and $r$ such that
\begin{equation}\label{GN-in}
  \|z\|_{L^p(\R^d)}\leq C\|\nabla z\|_{L^r(\R^d)}^{ \theta}\|z\|_{L^q(\R^d)}^{1-\theta}
\end{equation}
  with $\frac{1}{p}= \theta\left(\frac{1}{r}-\frac{1}{d}\right)+\frac{1-\theta}{q}, ~0\leq \theta\leq1.$
\end{prop}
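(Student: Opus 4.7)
The plan is to reduce the general inequality to the endpoint Sobolev embedding $\|z\|_{L^{r^\sharp}(\R^d)} \leq C_{r,d}\|\nabla z\|_{L^r(\R^d)}$, where $r^\sharp := dr/(d-r)$ for $1\leq r<d$, followed by standard $L^p$-interpolation via H\"older. The key observation is that the exponent identity in the statement rewrites, in the sub-critical range $r<d$, as
$$\frac{1}{p} \;=\; \frac{\theta}{r^\sharp} + \frac{1-\theta}{q},$$
which is precisely the relation making $L^p$ interpolate between $L^{r^\sharp}$ and $L^q$.

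First I would establish the Sobolev inequality for $1\leq r<d$. The case $r=1$ comes from Gagliardo's classical slicing argument: for $z\in\mathcal{C}^\infty_0(\R^d)$ one writes $|z(x)|^{d/(d-1)}\leq \prod_{i=1}^d\bigl(\int_{\R}|\partial_i z|\,dx_i\bigr)^{1/(d-1)}$ and applies the Loomis--Whitney inequality (iterated H\"older over each coordinate) to deduce $\|z\|_{L^{d/(d-1)}}\leq C\|\nabla z\|_{L^1}$. For $1<r<d$, apply this inequality to $|z|^s$ with $s=(d-1)r/(d-r)$ chosen so that the target exponent equals $r^\sharp$; H\"older's inequality then bounds $\|\nabla(|z|^s)\|_{L^1}\leq s\|z\|_{L^{(s-1)r'}}^{s-1}\|\nabla z\|_{L^r}$, and since $(s-1)r'=r^\sharp$ the critical norm on the right can be absorbed to the left, yielding $\|z\|_{L^{r^\sharp}}\leq C_r\|\nabla z\|_{L^r}$. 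A density argument extends this to the natural class of $z$.

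Second, the elementary interpolation $\|z\|_{L^p}\leq \|z\|_{L^{r^\sharp}}^\theta\|z\|_{L^q}^{1-\theta}$ (valid by H\"older exactly when $\tfrac1p=\tfrac\theta{r^\sharp}+\tfrac{1-\theta}{q}$) chained with the Sobolev bound delivers the full inequality \eqref{GN-in} for $r<d$.

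The main obstacle is the regime $r\geq d$, where the critical Sobolev embedding degenerates. For $r>d$, I would rely on Morrey's inequality to control an $L^\infty$ (even Hölder) norm of $z$ on a unit ball by $\|\nabla z\|_{L^r}+\|z\|_{L^q}$, then interpolate between $L^q$ and $L^\infty$. For the borderline $r=d$ one lets $\tilde r\uparrow d$ in the sub-critical estimate. A cleaner unified route, which is the one used in Galdi's reference, is to first prove a sub-homogeneous bound $\|z\|_{L^p}\leq C(\|\nabla z\|_{L^r}+\|z\|_{L^q})$ on a fixed scale using the embeddings above, and then use the exact scaling identity satisfied by the exponents (the relation $\tfrac1p-\tfrac{1-\theta}{q}=\theta(\tfrac1r-\tfrac1d)$ is precisely the homogeneity of $\|\nabla z\|_{L^r}^\theta\|z\|_{L^q}^{1-\theta}/\|z\|_{L^p}$ under $z\mapsto z(\lambda\cdot)$) to force the homogeneous form upon optimizing in $\lambda>0$.
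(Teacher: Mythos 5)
The paper offers no proof of this proposition: it is quoted verbatim from Galdi's book (Lemma II.3.3 there) and used as a black box, so there is no ``paper's route'' to compare against. Your outline is the standard textbook proof (Gagliardo's slicing for the $L^1$-endpoint, the $|z|^s$ trick for $1<r<d$, H\"older interpolation between $L^q$ and $L^{r^\sharp}$, and the additive-plus-scaling reduction), and all of those steps check out: the exponent computations $(s-1)r'=r^\sharp$ and $\tfrac1p=\tfrac{\theta}{r^\sharp}+\tfrac{1-\theta}{q}$ are correct, and the optimization in $\lambda$ does convert the inhomogeneous bound into the homogeneous one precisely because of the stated exponent identity.

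There is, however, one step that would fail as written: the borderline case $r=d$ cannot be obtained by ``letting $\tilde r\uparrow d$ in the sub-critical estimate.'' Two things go wrong. First, the constant in $\|z\|_{L^{\tilde r^\sharp}}\leq C_{\tilde r}\|\nabla z\|_{L^{\tilde r}}$ blows up as $\tilde r\uparrow d$. Second, and more fundamentally, on the whole space $\R^d$ you cannot dominate $\|\nabla z\|_{L^{\tilde r}}$ by $\|\nabla z\|_{L^{d}}$ for $\tilde r<d$ (H\"older goes the wrong way on a set of infinite measure), so the sub-critical inequality never produces a bound in terms of $\|\nabla z\|_{L^d}$. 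The standard repair is to run the $|z|^s$ argument directly at the exponent $d$: apply $\|w\|_{L^{d/(d-1)}}\leq C\|\nabla w\|_{L^1}$ to $w=|z|^s$ and H\"older with the pair $(d,d')$ to get $\|z\|_{L^{sd'}}^{s}\leq Cs\,\|z\|_{L^{(s-1)d'}}^{s-1}\|\nabla z\|_{L^{d}}$, then iterate (or choose $s$ once, suitably, and interpolate with $\|z\|_{L^q}$) to reach any finite $p>q$; this is the Ladyzhenskaya-type bootstrap. With that substitution your argument is complete for the range in which the statement is actually true; note in passing that the proposition as stated in the paper is slightly too generous (e.g.\ $\theta=1$, $r=d$ forces $p=\infty$, where the inequality is false), but that is an imprecision of the citation, not of your proof.
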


Let us now recall some   properties of the  spaces $\wt{D}^{1, 2}(\R^2)$ and $\mathcal{D}^{1, 2}(\R^3)$ that are used in the paper.  For more details, see Appendix A and B of \cite{PLL}.
The first one is  the  following weighted estimate for elements of the  space $\wt{D}^{1, 2}(\R^2).$
\begin{lem}[{\cite[Theorem B.1]{PLL}}]\label{Le-H11}
 For $m\in [2, \infty)$ and $\ell\in (1+\frac m2, \infty),$ there exists a positive constant $C$ depending on $m$ and $l$ such that for all $z\in \wt{D}^{1, 2}(\R^2),$
 \begin{align}
 \left(\int_{\R^2} \frac{|z|^m}{ {\langle x \rangle}^2}\, (\log \langle x \rangle )^{-\ell}\,dx\right)^{1/m}\leq C \left( \|z\|_{L^2(B_1)}+\|\nabla z\|_{L^2(\R^2)}\right).
 \end{align}
\end{lem}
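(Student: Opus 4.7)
The plan is to establish the inequality via a dyadic decomposition of $\R^2$, combining local Sobolev embeddings with a careful accumulation of gradient $L^2$ norms. Decompose $\R^2=B_1\cup\bigcup_{k\geq 0}A_k$ where $A_k:=\{2^k\leq|x|<2^{k+1}\}$, so that on $A_k$ one has $\langle x\rangle\sim 2^k$ and $\log\langle x\rangle\sim k$ (for $k\geq 1$). The contribution of $B_1$ to the left-hand side is harmless: on $B_1$ the weight is bounded and the standard Sobolev embedding $H^1(B_1)\hookrightarrow L^m(B_1)$ (valid for any $m<\infty$ in two dimensions) directly controls $\|z\|_{L^m(B_1)}$ by $\|z\|_{L^2(B_1)}+\|\nabla z\|_{L^2(B_1)}$. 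All work is thus concentrated on the annular pieces.

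On each $A_k$, $k\geq 1$, I would rescale $y=x/2^k$ to the fixed annulus $\tilde A_1=\{1\leq|y|<2\}$ and apply the (unscaled) Sobolev embedding there. Unfolding the change of variables yields
\[
\|z\|_{L^m(A_k)}\leq C\bigl(2^{k(2/m-1)}\|z\|_{L^2(A_k)}+2^{2k/m}\|\nabla z\|_{L^2(A_k)}\bigr).
\]
To handle $\|z\|_{L^2(A_k)}$, use the Poincar\'e inequality on $A_k$ to write $\|z-\bar z_k\|_{L^2(A_k)}\leq C2^k\|\nabla z\|_{L^2(A_k)}$, where $\bar z_k$ is the mean value of $z$ over $A_k$, whence $\|z\|_{L^2(A_k)}\leq C2^k(|\bar z_k|+\|\nabla z\|_{L^2(A_k)})$.

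The crux is estimating the means $\bar z_k$. Here I would use a telescoping argument with overlapping annuli $\tilde A_j:=\{2^j\leq|x|<2^{j+2}\}$: since $A_j\cup A_{j+1}\subset\tilde A_j$, the Poincar\'e inequality on $\tilde A_j$ gives $|\bar z_{j+1}-\bar z_j|\leq C\|\nabla z\|_{L^2(\tilde A_j)}$, and a similar comparison ties $\bar z_0$ to the mean over $B_1$, bounded by $C\|z\|_{L^2(B_1)}$. Summing and applying Cauchy--Schwarz (with the fact that the $\tilde A_j$ have bounded overlap) yields $|\bar z_k|\leq C\bigl(\|z\|_{L^2(B_1)}+\sqrt{k+1}\,\|\nabla z\|_{L^2(\R^2)}\bigr)$. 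Plugging back,
\[
\|z\|_{L^m(A_k)}\leq C\,2^{2k/m}\sqrt{k+1}\,\bigl(\|z\|_{L^2(B_1)}+\|\nabla z\|_{L^2(\R^2)}\bigr).
\]

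Finally, on $A_k$ one has $\langle x\rangle^{-2}(\log\langle x\rangle)^{-\ell}\leq C\,2^{-2k}k^{-\ell}$, so
\[
\int_{A_k}\frac{|z|^m}{\langle x\rangle^2}(\log\langle x\rangle)^{-\ell}\,dx\leq C\,k^{m/2-\ell}\bigl(\|z\|_{L^2(B_1)}+\|\nabla z\|_{L^2(\R^2)}\bigr)^m,
\]
and summing over $k\geq 1$ converges precisely when $\ell-m/2>1$, which is the hypothesis. The main technical obstacle is the logarithmic growth of $|\bar z_k|$: it is the $(k+1)^{m/2}$ factor coming from the telescoping of gradient norms that pins down the sharp exponent $\ell>1+m/2$ in the statement; any cruder control of $\bar z_k$ would force a stricter condition on $\ell$.
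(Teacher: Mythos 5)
Your proof is correct. Note, however, that the paper does not prove this lemma at all: it is imported verbatim from Lions' book (Theorem B.1 of the appendix of \cite{PLL}), so there is no in-paper argument to compare with. Your dyadic argument is a sound, self-contained substitute: the rescaled Sobolev embedding on the reference annulus gives $\|z\|_{L^m(A_k)}\leq C\bigl(2^{k(2/m-1)}\|z\|_{L^2(A_k)}+2^{2k/m}\|\nabla z\|_{L^2(A_k)}\bigr)$ with a $k$-independent constant, the Poincar\'e inequality on $A_k$ scales with the factor $2^k$ as you use it, the telescoping of means over overlapping annuli together with Cauchy--Schwarz and bounded overlap indeed yields $|\bar z_k|\leq C\bigl(\|z\|_{L^2(B_1)}+\sqrt{k+1}\,\|\nabla z\|_{L^2(\R^2)}\bigr)$, and the resulting series $\sum_k k^{m/2-\ell}$ converges exactly under the hypothesis $\ell>1+m/2$. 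Two cosmetic points: the annulus $A_0=\{1\leq|x|<2\}$ should be treated with the bounded-weight/local Sobolev argument like $B_1$ (your $\log\langle x\rangle\sim k$ asymptotics only start at $k\geq1$, but $\log\langle x\rangle\geq 1/2$ there so nothing is lost), and the comparison of $\bar z_0$ with the mean over $B_1$ uses $\|\nabla z\|_{L^2(B_2)}$, which is of course absorbed into $\|\nabla z\|_{L^2(\R^2)}$. What your route buys, compared with simply invoking \cite{PLL}, is an elementary and fully explicit proof with transparent dependence of the constant on $m$ and $\ell$, and it makes visible why the threshold $\ell>1+\frac m2$ is exactly what the $\sqrt{k}$ growth of the dyadic means forces.
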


Then one has the following Proposition due to  Li and Xin \cite{LX},  which is a combination of Lemma \ref{Le-H11} with the Poincar\'{e} inequality; see \cite{LX}, (2.6) and (2.8) in the proof of Lemma 2.4 therein.
\begin{prop}[{\cite[Lemma 2.4]{LX}}]\label{Le-Li-Xin}
Let $m\in [2, \infty)$ and $\ell\in (1+\frac m2, \infty)$. Let $z\in \wt{D}^{1, 2}(\R^2)$ and $\eta\in L^\infty(\R^2)$. Assume that $\sqrt{\eta }z\in L^2(\R^2)$ and satisfies
 \begin{align*}
 0\leq \eta\leq \eta^*,\quad M\leq \int_{B_{R}} \eta\,dx,
 \end{align*}
for   positive constants $\eta^*, M, R.$ Then there exists a positive constant $C$ depending only on $m, \ell, \eta^*, M, R$ such that
\begin{equation}\label{e.estprop14}
\left(\int_{\R^2} \frac{|z|^m}{ {\langle x \rangle}^2}\, (\log \langle x \rangle )^{-\ell}\,dx\right)^{1/m}\leq C(\|\sqrt{\eta} z\|_{L^2(\R^2)}+\|\nabla z\|_{L^2(\R^2)}).
\end{equation}
\end{prop}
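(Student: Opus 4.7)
The plan is to obtain Proposition \ref{Le-Li-Xin} by combining the unweighted Lemma \ref{Le-H11} with a Poincar\'e-type argument that uses the lower bound $\int_{B_R}\eta\,dx\geq M$ to replace the local $L^2$ norm of $z$ by $\|\sqrt{\eta}z\|_{L^2(\R^2)}+\|\nabla z\|_{L^2(\R^2)}$.

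First I would invoke Lemma \ref{Le-H11}, which already gives
\begin{equation*}
\left(\int_{\R^2} \frac{|z|^m}{\langle x\rangle^2}(\log\langle x\rangle)^{-\ell}dx\right)^{1/m}\leq C(\|z\|_{L^2(B_1)}+\|\nabla z\|_{L^2(\R^2)}).
\end{equation*}
By enlarging the ball if $R\geq 1$, or by a trivial localization argument if $R<1$ (bounding $\|z\|_{L^2(B_1)}$ by $\|z\|_{L^2(B_R)}+C\|\nabla z\|_{L^2(\R^2)}$ via Poincar\'e on the annulus $B_1\setminus B_R$), it suffices to prove the auxiliary estimate
\begin{equation*}
\|z\|_{L^2(B_R)}\leq C(\eta^*,M,R)\,\bigl(\|\sqrt{\eta}z\|_{L^2(\R^2)}+\|\nabla z\|_{L^2(\R^2)}\bigr).
\end{equation*}

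To prove this auxiliary estimate I would introduce the mean $\bar z:=|B_R|^{-1}\int_{B_R}z\,dx$. The Poincar\'e inequality on $B_R$ yields
\begin{equation*}
\|z-\bar z\|_{L^2(B_R)}\leq C_R\|\nabla z\|_{L^2(B_R)}.
\end{equation*}
Using the triangle inequality $\|\sqrt{\eta}\bar z\|_{L^2(B_R)}\leq \|\sqrt{\eta}z\|_{L^2(B_R)}+\|\sqrt{\eta}(z-\bar z)\|_{L^2(B_R)}$ together with the upper bound $\eta\leq\eta^*$ and, crucially, the lower bound $\int_{B_R}\eta\,dx\geq M$, one gets
\begin{equation*}
M\,|\bar z|^2\leq \int_{B_R}\eta\,|\bar z|^2dx\leq 2\|\sqrt{\eta}z\|_{L^2(\R^2)}^2+2\eta^*C_R^2\|\nabla z\|_{L^2(\R^2)}^2,
\end{equation*}
so that $|\bar z|\leq C(\eta^*,M,R)(\|\sqrt{\eta}z\|_{L^2}+\|\nabla z\|_{L^2})$. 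Combining this with the Poincar\'e bound on $z-\bar z$ and $\|z\|_{L^2(B_R)}\leq\|z-\bar z\|_{L^2(B_R)}+|B_R|^{1/2}|\bar z|$ gives the auxiliary estimate, and hence the desired inequality after plugging into Lemma \ref{Le-H11}.

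The main (and really only) obstacle is controlling the mean $\bar z$: without a lower bound on $\eta$ on some set, $\|\sqrt{\eta}z\|_{L^2}$ gives no information on $z$ itself, so the hypothesis $\int_{B_R}\eta\geq M$ must be used in an essential way, exactly as above. The rest is a clean decomposition $z=(z-\bar z)+\bar z$ with Poincar\'e handling the oscillation and the $\eta$-mass on $B_R$ handling the constant part. Nothing in the argument depends on $z\in \wt{\mathcal D}^{1,2}(\R^2)$ beyond what Lemma \ref{Le-H11} already exploits, so the proof is self-contained once that lemma is granted.
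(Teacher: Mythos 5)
Your proof is correct and follows exactly the route the paper indicates: it does not write out a proof but cites \cite[Lemma 2.4]{LX} and describes the result as ``a combination of Lemma \ref{Le-H11} with the Poincar\'e inequality,'' which is precisely your decomposition $z=(z-\bar z)+\bar z$ with the mass condition $\int_{B_R}\eta\,dx\geq M$ controlling the mean. The details you supply (Poincar\'e for the oscillation, the $\eta$-mass for the constant part, and the adjustment between $B_1$ and $B_R$) are all sound.
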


We also needed the following conditional $L^2$ bound  for elements of the  space $\wt{D}^{1, 2}(\R^2).$
\begin{lem}[{\cite[Lemma B.1 and Remark B.1]{PLL}}]\label{Le-H1}
$\wt{D}^{1, 2}(\R^2)$ is a Hilbert space for the scalar product $\langle\nabla z, \nabla w\rangle+\langle\mathbf{1}_{B_1}\,z, \,w\rangle$ and an equivalent norm is given by $\|\nabla z\|_{L^2(\R^2)}+|\int_{B_1} z\,dx|.$ Moreover, we have $\wt{D}^{1, 2}(\R^2)\cap (L^1(\R^2)+L^2(\R^2))\hookrightarrow H^1(\R^2),$ more precisely, there exists a constant $C>0$ such that for all $z\in\wt{D}^{1, 2}(\R^2)$ satisfying $z=z_1+z_2$ with $z_1\in L^1(\R^2), \,z_2\in L^2(\R^2)$
\begin{equation}
\|z\|_{L^2(\R^2)}\leq C\left(\|z_1\|_{L^1(\R^2)}^\frac{1}{2}\|\nabla z\|_{L^2(\R^2)}^\frac{1}{2}+\|z_2\|_{L^2(\R^2)}\right).
\end{equation}
A similar result holds for all $z\in \mathcal{D}^{1, 2}({\R^3}).$ There exists a constant $C>0$ such that for all $z\in\mathcal{D}^{1, 2}(\R^3)$ satisfying $z=z_1+z_2$ with $z_1\in L^1(\R^3), \,z_2\in L^2(\R^3)$
\begin{equation}
\|z\|_{L^2(\R^3)}\leq C\left(\|z_1\|_{L^1(\R^3)}^\frac{2}{5}\|\nabla z\|_{L^2(\R^3)}^\frac{3}{5}+\|z_2\|_{L^2(\R^3)}\right).
\end{equation}
\end{lem}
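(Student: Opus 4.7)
The lemma combines a Hilbert-space statement, an equivalent-norm statement and a quantitative embedding into $L^2$. I would treat them in this order.

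First, for the Hilbert structure, bilinearity and symmetry of $\langle\nabla z,\nabla w\rangle+\langle\mathbf 1_{B_1}z,w\rangle$ are immediate; positivity follows because $\int_{B_1}|z|^2=0$ and $\nabla z=0$ a.e. force $z$ to be a constant vanishing on $B_1$, hence $z\equiv 0$. The equivalence with $\|\nabla z\|_{L^2(\R^2)}+|\int_{B_1}z|$ reduces to a classical Poincar\'e--Wirtinger inequality on the ball $B_1$: writing $\bar z:=|B_1|^{-1}\int_{B_1}z$, one has $\|z-\bar z\|_{L^2(B_1)}\leq C\|\nabla z\|_{L^2(B_1)}$, so that $\|z\|_{L^2(B_1)}\leq C(\|\nabla z\|_{L^2(\R^2)}+|\int_{B_1}z|)$, and the reverse bound is Cauchy--Schwarz. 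Completeness with respect to this norm follows by a standard $H^1_{\rm loc}$ argument: a Cauchy sequence $\{z_n\}$ has $\nabla z_n$ Cauchy in $L^2(\R^2)$ and $z_n$ Cauchy in $L^2(B_1)$, hence Cauchy in $L^2(B_R)$ for every $R\geq 1$ by Poincar\'e on $B_R$; the $L^2_{\rm loc}$ limit $z$ automatically satisfies $\nabla z\in L^2(\R^2)$ and convergence in the $\wt{D}^{1,2}$-norm.

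The core quantitative estimate I would prove by a Fourier frequency splitting, optimized in the cutoff radius. By Plancherel,
\begin{equation*}
\|z\|_{L^2(\R^2)}^2=\int_{|\xi|<R}|\hat z(\xi)|^2\,d\xi+\int_{|\xi|\geq R}|\hat z(\xi)|^2\,d\xi.
\end{equation*}
On the high-frequency region we use $|\hat z|\leq|\xi|^{-1}|\widehat{\nabla z}|$, yielding $\int_{|\xi|\geq R}|\hat z|^2\leq R^{-2}\|\nabla z\|_{L^2}^2$. On the low-frequency region we use the decomposition $\hat z=\hat z_1+\hat z_2$ with $\|\hat z_1\|_{L^\infty}\leq\|z_1\|_{L^1}$ and $\|\hat z_2\|_{L^2}\leq\|z_2\|_{L^2}$, giving $\int_{|\xi|<R}|\hat z|^2\leq C R^2\|z_1\|_{L^1}^2+2\|z_2\|_{L^2}^2$. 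Optimizing the balance $R^2\|z_1\|_{L^1}^2\sim R^{-2}\|\nabla z\|_{L^2}^2$ by choosing $R^2\sim\|\nabla z\|_{L^2}/\|z_1\|_{L^1}$ produces the announced bound $\|z\|_{L^2}\leq C(\|z_1\|_{L^1}^{1/2}\|\nabla z\|_{L^2}^{1/2}+\|z_2\|_{L^2})$.

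The three-dimensional statement is handled by the same scheme but with the $L^\infty$ bound on $\hat z_1$ integrated over a ball of volume $\sim R^3$ instead of $R^2$, so that the low-frequency estimate becomes $\int_{|\xi|<R}|\hat z|^2\leq CR^3\|z_1\|_{L^1}^2+2\|z_2\|_{L^2}^2$. Balancing against $R^{-2}\|\nabla z\|_{L^2}^2$ now requires $R^5\sim\|\nabla z\|_{L^2}^2/\|z_1\|_{L^1}^2$, which produces the exponents $\tfrac{2}{5}$ and $\tfrac{3}{5}$ of the statement. The only conceptual subtlety lies in the fact that the decomposition $z=z_1+z_2$ is not assumed to respect the derivative in any way, which is precisely why the argument must be carried out on $\hat z$ as a whole (controlling low frequencies via $z_1,z_2$ and high frequencies via $\nabla z$) rather than by separately estimating $z_1$ and $z_2$. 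I expect no real obstacle; the only point requiring care is to verify that the Fourier manipulations make sense for $z\in\wt{D}^{1,2}(\R^2)$ or $\mathcal D^{1,2}(\R^3)$, which is justified by interpreting $\hat z$ as a tempered distribution and noting that once $z$ is shown \emph{a posteriori} to lie in $L^2$ the formal calculation is legitimate; a clean way is to first prove the bound for $z\in\mathcal S(\R^d)$ and then extend by the density argument provided by the Hilbert structure.
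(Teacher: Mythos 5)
Your argument is correct. Note first that the paper itself offers no proof of this lemma: it is quoted verbatim from Lions' book (\cite[Lemma B.1 and Remark B.1]{PLL}), so there is no internal proof to compare against; your proposal is therefore a genuine, self-contained alternative to chasing the reference. The Hilbert-space and equivalent-norm parts via Poincar\'e--Wirtinger on $B_1$ and the $H^1_{\rm loc}$ completeness argument are standard and fine. The frequency-splitting proof of the quantitative embedding is the right mechanism and the exponents come out correctly in both dimensions: balancing $R^d\|z_1\|_{L^1}^2$ against $R^{-2}\|\nabla z\|_{L^2}^2$ gives the exponents $(\tfrac12,\tfrac12)$ for $d=2$ and $(\tfrac25,\tfrac35)$ for $d=3$, and a scaling check confirms these are the only admissible ones. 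You correctly identify the one genuine subtlety, namely that the decomposition $z=z_1+z_2$ does not interact with $\nabla z$, which forces the estimate to be run on $\hat z$ globally. On the justification of Plancherel, I would favour your ``a posteriori'' route over the density route and make it explicit: since $z\in L^1+L^2\subset\mathcal S'$, the distribution $\hat z=\hat z_1+\hat z_2$ is a locally integrable function (bounded plus $L^2$), the identity $\xi_j\hat z=-i\widehat{\partial_j z}$ holds a.e., and your two estimates then show directly that $\hat z\in L^2(\R^d)$, whence $z\in L^2$ and $\|z\|_{L^2}=c_d\|\hat z\|_{L^2}$; by contrast, density of $\mathcal S$ in $\wt{\mathcal D}^{1,2}\cap(L^1+L^2)$ with uniform control of all four norms appearing in the inequality is not obvious and is best avoided. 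Also handle separately the degenerate cases $\|z_1\|_{L^1}=0$ or $\|\nabla z\|_{L^2}=0$ where the optimization in $R$ breaks down (both are trivial). With these small points made explicit, the proof is complete.
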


Then based on Lemma \ref{Le-H1},  we have
\begin{prop}[interpolation estimate]\label{Prop-intp}
Let $d=2,\ 3$. For all $z\in \wt{D}^{1, 2}(\R^2)$ or $\mathcal{D}^{1, 2}(\R^3)$ and all non-negative function $\eta\in L^\infty(\R^d)$ that satisfies  either
\begin{equation}\label{condf1}
(1/\eta)\,\mathbf{1}_{\eta<\delta_0}\in L^1(\R^d),\quad{\rm for ~some}~\delta_0>0,
\end{equation}
or
\begin{equation}\label{condf2}
(\underline{\eta}-\eta)_+\in L^p(\R^d), \quad{\rm for ~some}~\underline{\eta}\in (0, \infty), \,\, p\in (d/2, \infty),
\end{equation}
there exists a constant $C_*$ depending on $d$, $\delta_0$ and $\|( {1}/{{\eta}})\,\mathbf{1}_{\eta< \delta_0}\|_{L^1}$ in the case when \eqref{condf1} is satisfied, $p$, $d$, $\bar\eta$ and $\|(\bar\eta-\eta)_+\|_{L^p}$ in the case when \eqref{condf2} is satisfied, 
such that
\begin{equation}\label{intp-ineq}
\|z\|_{L^2(\R^d)}\leq C_*(\|\sqrt{\eta}z\|_{L^2(\R^d)}+\|\nabla z\|_{L^2(\R^d)}).
\end{equation}
\end{prop}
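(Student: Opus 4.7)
My plan is to reduce \eqref{intp-ineq} to the decomposition estimate of Lemma \ref{Le-H1}, which asserts that any splitting $z=z_1+z_2$ with $z_1\in L^1(\R^d)$ and $z_2\in L^2(\R^d)$ produces
\[
\|z\|_{L^2(\R^d)}\leq C\bigl(\|z_1\|_{L^1}^{\theta_d}\|\nabla z\|_{L^2}^{1-\theta_d}+\|z_2\|_{L^2}\bigr),
\]
with $\theta_2=1/2$ and $\theta_3=2/5$. The natural splitting, adapted to the near-vacuum set, is
\[
z_2:=z\,\mathbf{1}_{\{\eta\geq\delta\}},\qquad z_1:=z\,\mathbf{1}_{\{\eta<\delta\}},
\]
with $\delta:=\delta_0$ in Case \eqref{condf1} and $\delta:=\underline{\eta}/2$ in Case \eqref{condf2}; in both cases one gets immediately
\[
\|z_2\|_{L^2}^2\leq \delta^{-1}\int\eta|z|^2\,dx\leq \delta^{-1}\|\sqrt{\eta}z\|_{L^2}^2.
\]

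In Case \eqref{condf1}, Cauchy--Schwarz directly yields
\[
\|z_1\|_{L^1}\leq \|\sqrt{\eta}z\|_{L^2}\,\bigl\|(1/\eta)\mathbf{1}_{\eta<\delta_0}\bigr\|_{L^1}^{1/2},
\]
so that Lemma \ref{Le-H1} combined with Young's inequality gives \eqref{intp-ineq} in both dimensions. In Case \eqref{condf2}, the key observation is that on $E:=\{\eta<\underline{\eta}/2\}$ one has $(\underline{\eta}-\eta)_+\geq\underline{\eta}/2$, hence
\[
|E|\leq (2/\underline{\eta})^p\|(\underline{\eta}-\eta)_+\|_{L^p}^p<\infty.
\]
In three dimensions this finiteness combines nicely with the continuous embedding $\mathcal{D}^{1,2}(\R^3)\hookrightarrow L^6(\R^3)$: H\"older's inequality yields
\[
\|z_1\|_{L^1}\leq |E|^{5/6}\|z\|_{L^6(\R^3)}\leq C|E|^{5/6}\|\nabla z\|_{L^2(\R^3)},
\]
and Lemma \ref{Le-H1} with $\theta_3=2/5$ then closes the argument.

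The main obstacle is the two-dimensional case under \eqref{condf2}: the space $\wt{\mathcal{D}}^{1,2}(\R^2)$ embeds into no global $L^q(\R^2)$, hence the bound on $\|z_1\|_{L^1}$ used above is not available. My plan there is to estimate the vacuum contribution directly, through H\"older with conjugate exponents $(p,p')$, $p':=p/(p-1)$,
\[
\int_E|z|^2\,dx\leq|E|^{1/p}\|z\|_{L^{2p'}(\R^2)}^2,
\]
and then invoking the two-dimensional Gagliardo--Nirenberg inequality \eqref{GN-in},
\[
\|z\|_{L^{2p'}(\R^2)}^2\leq C\|z\|_{L^2(\R^2)}^{2(p-1)/p}\|\nabla z\|_{L^2(\R^2)}^{2/p}.
\]
Adding the easy bound on $\{\eta\geq\underline{\eta}/2\}$ yields
\[
\|z\|_{L^2}^2\leq(2/\underline{\eta})\|\sqrt{\eta}z\|_{L^2}^2+C|E|^{1/p}\|z\|_{L^2}^{2(p-1)/p}\|\nabla z\|_{L^2}^{2/p},
\]
and Young's inequality with exponents $p/(p-1)$ and $p$ allows me to absorb the $\|z\|_{L^2}^2$ contribution on the left-hand side. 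Since this absorption is legitimate only when $\|z\|_{L^2}$ is a priori finite, I would actually apply the argument to a cut-off sequence $z_k:=z\chi_k$, $\chi_k\in C_c^\infty(\R^2)$ equal to $1$ on $B_k$; each $z_k$ lies in $L^2(\R^2)$ thanks to the local $L^2$ regularity granted by the first part of Lemma \ref{Le-H1}, the commutator terms $z\nabla\chi_k$ are controlled uniformly in $k$ by the weighted estimate of Lemma \ref{Le-H11}, and one may then pass to the limit $k\to\infty$.
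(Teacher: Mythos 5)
Your Case \eqref{condf1} argument is exactly the paper's: the same splitting along $\{\eta<\delta_0\}$, the same Cauchy--Schwarz bound on $\|z\mathbf{1}_{\eta<\delta_0}\|_{L^1}$, then Lemma \ref{Le-H1} and Young. For Case \eqref{condf2} the paper avoids any case distinction in $d$ and any use of Lemma \ref{Le-H1}: it uses the pointwise inequality $\sqrt{\underline{\eta}}\leq\sqrt{(\underline{\eta}-\eta)_+}+\sqrt{\eta}$, H\"older in $L^p$, the Gagliardo--Nirenberg inequality \eqref{GN-in} with exponent $2p/(p-1)$, and Young's inequality to absorb $\frac{\sqrt{\underline{\eta}}}{2}\|z\|_{L^2}$; your variant through the finite-measure set $E=\{\eta<\underline{\eta}/2\}$ is the same mechanism in 2D, and your 3D shortcut via $\mathcal{D}^{1,2}(\R^3)\hookrightarrow L^6(\R^3)$ is a perfectly fine (indeed simpler, absorption-free) alternative.

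The one step that does not hold up as written is the justification of the absorption in the 2D case via the spatial cut-off $z_k=z\chi_k$. You claim the commutator terms $z\nabla\chi_k$ are ``controlled uniformly in $k$ by the weighted estimate of Lemma \ref{Le-H11}'', but that lemma only controls $\int |z|^2\langle x\rangle^{-2}(\log\langle x\rangle)^{-\ell}\,dx$ for $\ell>2$. With a standard cut-off, $|\nabla\chi_k|\lesssim k^{-1}$ on $B_{2k}\setminus B_k$, this yields $\|z\nabla\chi_k\|_{L^2}^2\lesssim(\log k)^{\ell}\,\tau_k$, and with a logarithmic cut-off still $(\log k)^{\ell-2}\tau_k$, where $\tau_k$ is the tail of the weighted integral; since $\tau_k\to0$ with no rate and $\ell>2$ is forced, uniform boundedness does not follow, and even a $z$-dependent uniform bound would only give $z\in L^2$ rather than the stated inequality unless you then rerun the argument on $z$ itself. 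Note the paper performs the same absorption without comment, so you identified a real subtlety; but the clean fix is not a spatial cut-off. Truncate in the range instead: for $T_k(z):=\max(-k,\min(k,z))$ (componentwise) one has $|\nabla T_k(z)|\leq|\nabla z|$, $|T_k(z)|\leq|z|$, and $T_k(z)\in L^2(\R^2)$ because $|T_k(z)|\leq k$ on the finite-measure set $E$ while $|T_k(z)|\leq\sqrt{2/\underline{\eta}}\,\sqrt{\eta}\,|z|$ on $E^c$; running your inequality for $T_k(z)$, absorbing legitimately, and letting $k\to\infty$ by monotone convergence gives \eqref{intp-ineq} with the right constant. With that replacement your proof is complete.
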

\begin{proof}
We first consider $\eta$ satisfying \eqref{condf2}.  It is easy to check that
\begin{align*}
\sqrt{\underline{\eta}}\leq \sqrt{(\underline{\eta}-\eta)_+}+\sqrt{\eta}.
\end{align*}
We then write by H\"{o}lder's inequality,  the  Gagliardo-Nirenberg inequality in Proposition \ref{GN} and Young's inequality that
 \begin{align*}
\sqrt{\underline{\eta}}\|z\|_{L^2(\R^d)}\leq& \|\sqrt{(\underline{\eta}-\eta)_+}\, z\|_{L^2(\R^d)}+\|\sqrt{\eta} z\|_{L^2(\R^d)}\\
\leq& \| (\underline{\eta}-\eta)_+ \|_{L^p(\R^d)}^\frac{1}{2}\|z\|_{L^{\frac{2p}{p-1}}}+\|\sqrt{\eta} z\|_{L^2(\R^d)} \\
\leq&C \|(\underline{\eta}-\eta)_+\|_{L^p(\R^d)}^\frac{1}{2}\|z\|_{L^2(\R^d)}^{1-\frac{d}{2p}}\|\nabla z\|_{L^2(\R^d)}^{\frac{d}{2p}}+\|\sqrt{\eta} z\|_{L^2(\R^d)}\\
\leq&C \|(\underline{\eta}-\eta)_+\|_{L^p(\R^d)}^\frac{d}{p} \|\nabla z\|_{L^2(\R^d)}+\frac{\sqrt{\underline{\eta}}}{2}\| z\|_{L^2(\R^d)}  +  \|\sqrt{\eta} z\|_{L^2(\R^d)},
\end{align*}
which enables us to obtain \eqref{intp-ineq}.

If \eqref{condf1} is satisfied, we decompose
\begin{align*}
z=z\,\mathbf{1}_{\eta<\delta_0}+z\,\mathbf{1}_{\eta\geq \delta_0}
\end{align*}
and write that
\begin{align*}
 \|z \,\mathbf{1}_{\eta\geq \delta_0}\|_{L^2(\R^d)}&\leq\frac{1}{\sqrt{\delta_0}}\|\sqrt{\eta }z\,\mathbf{1}_{\eta\geq \delta_0}\|_{L^2(\R^d)}\leq \frac{1}{\sqrt{\delta_0}}\|\sqrt{\eta }z\|_{L^2(\R^d)},\\
 \|z \,\mathbf{1}_{\eta< \delta_0} \|_{L^1(\R^d)}&\leq \|\sqrt{\eta }z\|_{L^2(\R^d)}\| ({1}/{\sqrt{\eta}})\,\mathbf{1}_{\eta< \delta_0}\|_{L^2(\R^d)}\\
&\leq \|\sqrt{\eta }z\|_{L^2(\R^d)}\|( {1}/{{\eta}})\,\mathbf{1}_{\eta< \delta_0}\|_{L^1(\R^d)}^\frac{1}{2}.
\end{align*} 
We complete the proof by using Lemma \ref{Le-H1} and Young's inequality.\qedhere
\end{proof}

\end{appendices}

\section*{Acknowledgment}
CP and JT are partially supported by the Agence Nationale de la Recherche,
project BORDS, grant ANR-16-CE40-0027-01 and by the CY
Initiative of Excellence, project CYNA (CY Nonlinear Analysis). CP is also partially supported by the Agence Nationale de la Recherche,
 project SINGFLOWS, grant ANR-18-CE40-0027-01, project CRISIS, grant ANR-20-CE40-0020-01, by the CY Initiative of Excellence, project CYFI (CYngular Fluids and Interfaces). JT is also supported by the Labex MME-DII.  JT  thanks Rapha\"{e}l Danchin for his remarkable suggestions. Both authors also thank Marcel Zodji for his remarks on an earlier version of the paper.

\section*{Data availability statement}

Data sharing is not applicable to this article as no datasets were generated or analyzed during the current study.

\section*{Conflict of interest}

The authors declare that they have no conflict of interest.


\end{document}